\theoremstyle{plain}
\newtheorem{thm}{Theorem}[section]
\newtheorem{prop}[thm]{Proposition}
\newtheorem{lem}[thm]{Lemma}
\newtheorem{corol}[thm]{Corollary}
\theoremstyle{definition}
\newtheorem{defn}[thm]{Definition}
\newtheorem{examps}[thm]{Examples}
\theoremstyle{remark}
\newtheorem{rem}[thm]{Remark}
\numberwithin{equation}{section} 
\newcommand{\ti}{\text{-}}
\newcommand{\tp}{\otimes}
\newcommand{\F}{\mathbb F}
\newcommand{\Z}{\mathbb Z}
\newcommand{\Rq}{$R$-quadratic\ }
\newcommand{\Rqq}{$R$-quadratic}
\newcommand{\PR}[1]{P^2_R(#1)}
\newcommand{\GA}[1]{\Gamma^2_R(#1)}
\newcommand{\LA}[1]{\Lambda^2_R(#1)}
\newcommand{\ga}[1]{\gamma_2(#1)}
\newcommand{\SY}[1]{{\rm Sym}\/^2_R(#1)}
\newcommand{\mr}[1]{ \stackrel{#1}{\longrightarrow} }
\newcommand{\rond}{{\scriptstyle\circ}}
\newcommand{\noqed}{\renewcommand{\qed}{}}
\DeclareMathOperator{\coker}{Coker}
\DeclareMathOperator{\Ker}{Ker}
\DeclareMathOperator{\IM}{Im}
\newenvironment{MYitemize}
{\begin{itemize}}{\end{itemize}}
\begin{document}

\title{Quadratic maps between modules}
\author[H. GAUDIER]{Henri GAUDIER}
\author[M. HARTL]{Manfred HARTL}
\address{LAMAV and FR CNRS 2956, Universit\'e de Valenciennes, le Mont Houy, \hfil\break \strut\hskip4.3mm F-59313 VALENCIENNES-Cedex~9}
 \email{manfred.hartl@univ-valenciennes.fr}
\email{henri.gaudier@univ-valenciennes.fr}
\subjclass[2000]{13C99, 13N99, 15A78, 20F18}
\keywords{quadratic map, quadratic derivation, divided power algebra, $R$-group, polynomial ideal}
\date{\today}
\maketitle
\begin{abstract}
 We introduce a notion of $R$-quadratic maps between modules over a commutative ring $R$ which generalizes several classical notions arising in linear algebra and group theory. On a given module $M$ such maps are represented by $R$-linear maps on a certain module $P^2_R(M)$. The structure of this module is described in term of the symmetric tensor square ${\rm Sym}^2_R(M)$, the degree 2 component $\Gamma^2_R(M)$ of the divided power algebra over $M$,  and the ideal $I_2$ of $R$ generated by the elements $r^2-r$, $r\in R$. The latter is shown to represent quadratic derivations on $R$ which arise in the theory of modules over square rings. This allows  to extend the classical notion of nilpotent $R$-group of class 2 with coefficients in a 2-binomial ring $R$ to any ring $R$. We provide a functorial presentation of $I_2$ and several exact sequences embedding the modules $P^2_R(M)$ and $\Gamma^2_R(M)$.
\end{abstract}

In this paper, we introduce and study quadratic maps between modules $M$ et $N$ over a commutative ring $R$ with $1$.
Quadratic forms are the most classical example of such maps; more generally, a notion of homogenous polynomial maps from $M$ to $N$ has been defined in such a way that they are represented by $R$-linear maps from $\Gamma^n_R(M)$ to $N$, where $\Gamma^n_R(M)$ is the homogenous term of degree $n$ of the divided power algebra over $R$ \cite{Rob}. Non-homogenous polynomial maps are then defined to be sums of homogenous ones. This viewpoint is the basis of the recent theory of strict polynomial functors with its numerous spectacular applications, notably allowing to compute the generic cohomology of general linear groups over finite fields \cite{FS}. So this definition of polynomial maps is very satisfactory when $R$ is a field;  
for general rings $R$, however, it is too restrictive: for $R=M=N=\Z$, the 
map assigning $\binom n2$ to $n$ should certainly be considered as being 
quadratic, but does not split as a sum of a linear and a homogenous quadratic map. This example actually comes from group theory where a notion of polynomial maps from groups to   abelian  groups was introduced by Passi \cite{Pa68} in the context of dimension subgroups, but later on turned out to admit many other applications in nilpotent group theory, too \cite{Diss}, \cite{PolProp}, \cite{GoG}.  A more general notion of quadratic maps between arbitrary groups \cite{QmG} arises 
in the new field of ``quadratic algebra" which furnishes an appropriate algebraic framework for dealing with various quadratic phenomena arising in homotopy theory, such as metastable homotopy, secondary homotopy groups and operations, 3-types, quadratic homology etc. This subject is developped in work of Baues, Jibladze, Pirashvili, Muro and the second author, see e.\ g.\ \cite{B}, \cite{BHP}, \cite{BJP1}, \cite{BJP2}, \cite{BM}.

This paper is meant to provide a bridge between the classical realm of quadratic maps and the recent domain of quadratic algebra, with the final aim to apply methods of the latter to problems of the former. We start out by giving a definition of quadratic maps from $M$ to $N$ generalizing both the one via divided powers and the one due to Passi (for $R=\Z$ or, more generally, for 2-binomial rings $R$ \cite{Wf}). These quadratic maps are represented by $R$-linear maps from a certain module $\PR{M}$ to $N$;
the goal of this paper is to express  $\PR{M}$ in terms of the simpler modules $\SY{M}$, $\LA{M}$ (the symmetric resp.\ exterior tensor square of $M$), and $\GA{M}$. For the latter we provide in section 2 a neat exact sequence in terms of the Frobenius twist. Next we must determine the structure of $\PR{R}$; its study gives rise to the notion of {\it quadratic derivations}\/ on $R$ which actually play an important role in commutative quadratic algebra. It turns out that they are represented by $R$-linear maps on the polynomial ideal $I_2$ of $R$, generated by the elements $r^2-r$, $r\in R$. This result provides a functorial presentation of $I_2$, and also leads to an interesting group theoretic application:  we use quadratic algebra to extend the classical notion of an $R$-group \cite{Wf} (nilpotent of class 2, up to now) over {\it 2-binomial}\/ coefficient rings $R$ to {\it arbitrary}\/  rings $R$, thus providing a notion of 2-step nilpotent, whence non-commutative module over $R$. In sections 4 and 6 we provide various natural exact sequences for $\PR{M}$, in terms of the simpler terms mentioned above; these sequences describe the kernels and cokernels of the canonical structure maps of $\PR{M}$. Finally we provide a presentation of the ideal $I_2$ in terms of a given presentation of the ring $R$.

\section{\Rq maps}
\label{sec:def}

Let $M$ and $N$ be $R$-modules and $f:M\to N$ be a map. The \textit{cross-effect} of $f$ is the map 
$d_f : M\times M \to N$ such that $d_f(x,y):=f(x+y)-f(x)-f(y)$. The \textit{cross-actions} are the maps $f_r : M\to N$ such that $f_r(x):=f(rx)-rf(x)$ for $r\in R$,  and the 
\textit{second cross-actions} are the maps $f_{[r]}$ such that $f_{[r]}(x):=f(rx)-r^2f(x)$.

\begin{defn}\label{df:quadmap}
For  $R$-modules $M$ and $N$  a map $f:M\to N$ is an
 \textit{\Rq map} if it satisfies the following two conditions:
\begin{enumerate}
 \item the cross-effect of $f$ is $R$-bilinear,
 \item the second cross-actions of $f$ are $R$-linear.
\end{enumerate}
An \Rq map is \textit{homogeneous} if its second cross-actions are 0.
\end{defn}

Examples of homogeneous $R$-quadratic maps are quadratic forms or $R$-bilinear maps $M=M_1\times M_2 \to N$.

Clearly, any $R$-linear map is $R$-quadratic. Moreover, the sum of an $R$-linear map and  a homogeneous $R$-quadratic map is $R$-quadratic. In particular, any pointed polynomial map of degree $\le 2$ between free $R$-modules is $R$-quadratic. More precisely, let $f: R^m \to R^n$ be given by $f(x_1,\ldots,x_m) = (F_1(x_1,\ldots,x_m),\ldots,F_n(x_1,\ldots,x_m))$ where $F_1,\ldots,F_n \in R[X_1,\ldots,X_m]$ are polynomials of degree $\le 2$ with trivial constant term. Then 
$f$ is $R$-quadratic. 

However, there are $R$-quadratic maps which do not decompose as a sum of an $R$-linear map and  a homogeneous $R$-quadratic map; for example, for $R=\Z$, the map $\Z\to \Z$, $n\mapsto \binom{n}{2}$. A sufficient criterion for the existence of such a decomposition is given by the following

\begin{prop}\label{pr:lin+hom} Suppose that $R$ contains an element $r$ such that $r$ and $r-1$ are invertible. Then any $R$-quadratic map $f: M \to N$ decomposes uniquely as a sum $f=f_1+f_2$ of an $R$-linear map $f_1$ and a homogenous $R$-quadratic map $f_2$.
\end{prop}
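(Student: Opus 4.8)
The plan is to pin down $f_1$ by a closed formula in terms of the second cross-action $f_{[r]}$, and then to check that the complementary map $f-f_1$ is homogeneous; the whole point will be one scalar identity relating the various second cross-actions.

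\medskip\noindent\emph{Uniqueness and the formula for $f_1$.} Suppose $f=f_1+f_2$ with $f_1$ $R$-linear and $f_2$ homogeneous $R$-quadratic. Since the operation $g\mapsto g_{[r]}$ is additive in $g$ and kills $f_2$ by homogeneity, we get $f_{[r]}=(f_1)_{[r]}$; and as $f_1$ is linear, $(f_1)_{[r]}(x)=f_1(rx)-r^2f_1(x)=(r-r^2)f_1(x)=-r(r-1)f_1(x)$. Since $r$ and $r-1$ are invertible, $r(r-1)$ is a unit, so $f_1$ is forced to be $f_1(x)=-\bigl(r(r-1)\bigr)^{-1}f_{[r]}(x)$, and then $f_2=f-f_1$ is forced as well. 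This gives uniqueness; it remains to prove that these formulas do define a decomposition of the asserted type.

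\medskip\noindent\emph{The key identity.} Take $f_1$ as above. It is $R$-linear because $f_{[r]}$ is, by condition (2) of Definition~\ref{df:quadmap}. The crux of the existence part is the identity
\[
r(r-1)\,f_{[s]}(x)\;=\;s(s-1)\,f_{[r]}(x)\qquad\text{for all }s\in R,\ x\in M .
\]
I would derive it by evaluating $f(rsx)$ in two ways: write $f(rsx)=r^2f(sx)+f_{[r]}(sx)$ and expand $f(sx)=s^2f(x)+f_{[s]}(x)$, then do the same with the roles of $r$ and $s$ interchanged. Using $R$-linearity of the second cross-actions to rewrite $f_{[r]}(sx)=s\,f_{[r]}(x)$ and $f_{[s]}(rx)=r\,f_{[s]}(x)$, the degree-$2$ terms $r^2s^2f(x)$ cancel and the remaining terms give exactly the displayed identity. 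This is the step I expect to be the main obstacle, and it is where hypothesis (2) is used in an essential way.

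\medskip\noindent\emph{Conclusion.} Set $f_2:=f-f_1$. Since $d_{f_1}=0$, the cross-effect of $f_2$ equals that of $f$, hence is $R$-bilinear. For the second cross-actions, a direct expansion using linearity of $f_1$ gives, for every $s\in R$,
\[
(f_2)_{[s]}(x)\;=\;f_{[s]}(x)+s(s-1)f_1(x)\;=\;f_{[s]}(x)-s(s-1)\bigl(r(r-1)\bigr)^{-1}f_{[r]}(x),
\]
which vanishes by the key identity. Thus $f_2$ is $R$-quadratic with trivial second cross-actions, i.e.\ homogeneous, and $f=f_1+f_2$ is the required decomposition.
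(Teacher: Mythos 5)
Your proposal is correct and follows essentially the paper's route: the linear part is the same closed formula $f_1=-\bigl(r(r-1)\bigr)^{-1}f_{[r]}$ (the paper states it up to a sign slip), so your $f_2=f-f_1$ coincides with the paper's $\bigl(r(r-1)\bigr)^{-1}f_r$; the only difference is that you verify homogeneity of $f_2$ via the symmetry identity $(r^2-r)f_{[s]}=(s^2-s)f_{[r]}$, whereas the paper simply invokes remark \ref{rem:vardef} that the cross-actions $f_s$ are homogeneous $R$-quadratic. Your uniqueness argument is likewise equivalent to the paper's observation that a map which is both $R$-linear and homogeneous $R$-quadratic vanishes once $r^2-r$ is a unit.
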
  

This criterion is improved in example \ref{ex:calculP2RM}(2) below.

\begin{proof} We can take $f_1(x) = \frac{1}{r(r-1)}f_{[r]}(x)$ and $f_2(x) = \frac{1}{r(r-1)}f_{r}(x)$ which is 
homogenous $R$-quadratic by remark \ref{rem:vardef} below. Uniqueness of $f_1$ and $f_2$ follows from the fact that under the hypothesis any map which is $R$-linear and homogenous $R$-quadratic is trivial; in fact, $r^2f(x) =f(rx)=rf(x)$ implies $f(x)=0$ as $r^2-r$ is invertible.
\end{proof}

Note that the proposition applies whenever $R$ is a field different from $\F_2$. If $R=\F_2$  any $R$-quadratic map is homogenous.

Finally, we discuss the case $R=\Z$. Passi \cite{Pa68} defines a map $f: G\to A$  from a group $G$ to an abelian group $A$ to be (normalized) polynomial of degree $\le n$ if its linear extension $\hat{f}: \Z[G]\to A$ to the group ring 
$ \Z[G]$ of $G$ annihilates $1+I^{n+1}(G)$; here $I^n(G)$ is the $n$-th power of the augmentation ideal $I(G)$ of  $\Z[G]$. An inductive characterization of this property \cite{Q3} shows that $f$ is polynomial of degree $\le 2$ iff its cross-effect $d_f$ is homomorphic in each variable. If $G$ is abelian, this is equivalent to $f$ being $\Z$-quadratic since then $f(nx )= nf(x) + \binom{n}{2}d_f(x,x)$ by induction, whence $f_n(x) =\binom{n}{2}d_f(x,x)$ which is homogenous $\Z$-quadratic; this suffices by remark \ref{rem:vardef} below.
\medskip

Let us exhibit some elementary properties of $R$-quadratic maps.
First note that if $f$ is $R$-quadratic, $f(0)=0$ as $d_f(0,0)=0$. 
Next for $x,y,z$ in $M$ and $r,s$ in $R$
the first  condition in \ref{df:quadmap} can be written as 
\begin{align}
& f(x+y+z)-f(x+y)-f(y+z)-f(x+z)+f(x)+f(y)+f(z)=0,
\label{eq:defadd1}\\
&
f(rx+sy)-f(rx)-f(sy)-rsf(x+y)+rsf(x)+rsf(y)=0.\label{eq:defadd2}
\end{align}
Additivity of $f_{[r]}$ then follows from \eqref{eq:defadd2} with $s=r$, and its $R$-linearity can be written as:
\begin{equation}
f(rsx)-r^2f(sx)-sf(rx)+r^2sf(x)=0.\label{eq:defscal1}
\end{equation}

\begin{rem}\label{rem:vardef}
 Relation \eqref{eq:defscal1} can be written as
$f_s(rx)=r^2f_s(x)$, that is $f_s$ is a  homogeneous \Rq map. Thus we see that $f$ is \Rq iff its cross-effect is $R$-bilinear and its cross-actions are homogeneous \Rqq.
\end{rem}

Clearly the set $R\ti Quad(M,N)$ (resp. $R\ti HQuad(M,N)$) of the \Rq maps (resp. homogeneous \Rq maps) from $M$ to $N$ is an $R$-module, and pre- or postcomposition of an \Rq map (resp. homogeneous \Rq map) by an $R$-linear map is an \Rq (resp. homogeneous \Rqq) map.

Throughout this paper the tensor product of $R$-modules $M$ and $N$ is denoted by $M\tp N$ instead of $M\tp_R N$.

\begin{lem}\label{crosquR} For any $R$-modules $M,M',N$ one has a natural isomorphism
\begin{equation*}
R\ti Quad(M\oplus M',N)\cong R\ti Quad(M,N)\oplus R\ti Quad(M',N)\oplus R\ti Hom(M\tp  M',N)
\end{equation*}
\end{lem}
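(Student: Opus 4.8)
The plan is to decompose a quadratic map $f : M\oplus M'\to N$ by restricting along the two canonical inclusions $i : M\hookrightarrow M\oplus M'$ and $i' : M'\hookrightarrow M\oplus M'$, and by extracting the ``mixed'' bilinear part via the cross-effect. Concretely, given $f$ I would set $g := f\rond i \in R\ti Quad(M,N)$, $g' := f\rond i' \in R\ti Quad(M',N)$, and $b := d_f(i(-),i'(-)) : M\times M'\to N$; by condition (1) in Definition~\ref{df:quadmap}, $b$ is $R$-bilinear, hence corresponds to an element of $R\ti Hom(M\tp M',N)$. This defines the map from left to right. Pre- and postcomposition facts noted before the lemma guarantee $g,g'$ are $R$-quadratic, so the triple $(g,g',b)$ indeed lands in the target; naturality in $M,M',N$ is then formal since every ingredient is built from composition with the structural maps.

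Next I would write down the inverse. Given $(g,g',b)$ I would define
\begin{equation*}
f(x,x') := g(x) + g'(x') + b(x,x').
\end{equation*}
The task is to check $f$ is $R$-quadratic. For the cross-effect: a direct expansion gives $d_f\big((x_1,x_1'),(x_2,x_2')\big) = d_g(x_1,x_2) + d_{g'}(x_1',x_2') + b(x_1,x_2') + b(x_2,x_1')$, which is visibly $R$-bilinear in the two pairs because $d_g$, $d_{g'}$ are $R$-bilinear and $b$ is $R$-bilinear. For the second cross-actions: from $f(r(x,x')) = g(rx)+g'(rx')+b(rx,rx') = g(rx)+g'(rx')+r^2b(x,x')$ one gets $f_{[r]}(x,x') = g_{[r]}(x) + g'_{[r]}(x')$, which is $R$-linear since $g_{[r]}$ and $g'_{[r]}$ are. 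So both conditions of Definition~\ref{df:quadmap} hold.

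Finally I would verify the two composites are mutually inverse. Starting from $(g,g',b)$, building $f$ and restricting back: $f\rond i = g$ and $f\rond i' = g'$ are immediate, and $d_f(i(x),i'(x')) = b(x,x')$ follows from the cross-effect formula above (all terms involving a single summand vanish since $d_g(x,0)=0$ etc.). Starting from $f$, forming the triple $(f\rond i, f\rond i', d_f(i(-),i'(-)))$ and reassembling: one must check $f(x,x') = f(x,0) + f(0,x') + d_f((x,0),(0,x'))$, which is exactly the definition of the cross-effect since $(x,x') = (x,0)+(0,x')$. The only mild subtlety — and the point I would be most careful about — is that the reassembly identity uses nothing beyond $f(0)=0$ and the definition of $d_f$, so it holds for \emph{any} pointed map; the quadratic hypothesis is used only to know $d_f$ restricted to $M\times M'$ is bilinear, i.e.\ to land in $R\ti Hom(M\tp M',N)$, and symmetrically to know the reassembled $f$ is again quadratic. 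There is no real obstacle here; the proof is a routine bookkeeping of cross-effects, and the main thing to get right is keeping track of which mixed terms survive.
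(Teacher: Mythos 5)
Your proof is correct and follows essentially the same route as the paper: restrict $f$ to the two summands, extract the mixed part via the cross-effect $d_f((x,0),(0,x'))$, and reconstruct $f(x,x')$ from these data. Your version is in fact more detailed than the paper's, since you also verify explicitly that the reassembled map is $R$-quadratic, which the paper leaves implicit.
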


\begin{proof} Assume $f:M\oplus M'\to N$ is an \Rq map. Then the restriction of $f$ to
$M$ and  $M'$ yields the \Rq maps $f_1:M\to N$ and $f_2:M'\to N$. One
defines the homomorphism $h:M\tp  M'\to N$ by 
$h(x\tp x')=d_f((x,0),(0,x'))$.
Knowledge of these maps allows to uniquely reconstruct  the map $f$, because
\begin{equation*}
f(x,x')=f((x,0)+(0,x'))=f_1(x)+f_2(x')+h(x\tp x'). \qedhere
\end{equation*}\noqed
\end{proof}

\subsubsection*{Universal \Rq map}
Let $\PR{M}$ be the $R$-module generated by the elements $p(x)$, $x\in M$ satisfying the relations
\begin{eqnarray}
& p(x+y+z)-p(x+y)-p(y+z)-p(x+z)+p(x)+p(y)+p(z)=0, \label{eq:defadd1P}\\
&p(rx+sy)-p(rx)-p(sy)-rs\,p(x+y)+rs\,p(x)+rs\,p(y)=0, \label{eq:defadd2P}\\
&p(rsx)-r^2\,p(sx)-s\,p(rx)+r^2s\,p(x)=0,\label{eq:defscal1P}
\end{eqnarray} 
for $x,y,z\in M$ and $r,s\in R$. Assigning $\PR{M}$ to $M$ defines an endofunctor of the category $R$-{\bf Mod} of $R$-modules in the obvious way. Clearly,

\begin{prop}\label{prp:quadUniv}
 The map $p:M\to \PR{M}$ is universal \Rqq, that is for any $R$-module $N$  precomposition by $p$ induces a binatural isomorphism
\begin{equation*}
 R\ti Hom(\PR{M},N)\to R\ti Quad(M,N).
\end{equation*}
\end{prop}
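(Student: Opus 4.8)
The plan is to verify the universal property by the standard "generators and relations" argument, exploiting the fact that $\PR{M}$ is defined by precisely the relations that characterize $R$-quadratic maps. First I would note that the map $p:M\to\PR{M}$ is itself $R$-quadratic: the three defining families of relations \eqref{eq:defadd1P}, \eqref{eq:defadd2P}, \eqref{eq:defscal1P} are exactly the relations \eqref{eq:defadd1}, \eqref{eq:defadd2}, \eqref{eq:defscal1} that (by the discussion preceding remark \ref{rem:vardef}) are equivalent to the two conditions of Definition \ref{df:quadmap}; indeed relations \eqref{eq:defadd1P} and \eqref{eq:defadd2P} together encode $R$-bilinearity of the cross-effect $d_p$, and \eqref{eq:defscal1P} encodes, via remark \ref{rem:vardef}, that each cross-action $p_s$ is homogeneous $R$-quadratic, equivalently that the second cross-actions $p_{[r]}$ are $R$-linear. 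Hence precomposition with $p$ does define a map $R\ti Hom(\PR{M},N)\to R\ti Quad(M,N)$, and it is clearly $R$-linear and natural in $N$ (and in $M$).

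Next I would construct the inverse. Given an $R$-quadratic map $f:M\to N$, I want an $R$-linear map $\bar f:\PR{M}\to N$ with $\bar f\rond p=f$. Since $\PR{M}$ is generated as an $R$-module by the symbols $p(x)$, there is at most one such $\bar f$, and it must send $p(x)\mapsto f(x)$; this gives injectivity of precomposition by $p$ once we know the assignment $p(x)\mapsto f(x)$ is well defined. To see it is well defined, consider the free $R$-module $F$ on the set $\{e_x : x\in M\}$ and the $R$-linear surjection $F\twoheadrightarrow\PR{M}$, $e_x\mapsto p(x)$, whose kernel is by definition the submodule generated by all the elements obtained from the left-hand sides of \eqref{eq:defadd1P}, \eqref{eq:defadd2P}, \eqref{eq:defscal1P} by substituting $e_x$ for $p(x)$. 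The $R$-linear map $F\to N$, $e_x\mapsto f(x)$, kills each of these generators precisely because $f$ satisfies \eqref{eq:defadd1}, \eqref{eq:defadd2}, \eqref{eq:defscal1}; hence it factors through $\PR{M}$, yielding the desired $\bar f$. Thus $f\mapsto\bar f$ is a two-sided inverse to precomposition with $p$, and it is visibly $R$-linear and natural.

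There is essentially no hard step here: the statement is true more or less by construction, the content being the bookkeeping identification of the defining relations of $\PR{M}$ with the defining conditions of an $R$-quadratic map. The one point requiring a little care — and the closest thing to an obstacle — is the equivalence, already recorded in the excerpt, between conditions (1) and (2) of Definition \ref{df:quadmap} and the symmetrized relations \eqref{eq:defadd1}--\eqref{eq:defscal1}: one should check that no relation has been lost or weakened in passing to the presentation, in particular that $R$-bilinearity of $d_f$ really is captured jointly by the additivity relation \eqref{eq:defadd1P} (symmetric $3$-cocycle condition) and the ``mixed scalar'' relation \eqref{eq:defadd2P}, and that $R$-linearity of the second cross-actions is captured by \eqref{eq:defscal1P} via remark \ref{rem:vardef}. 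Once this dictionary is in place, naturality in $M$ follows since for an $R$-linear $g:M\to M'$ the induced $\PR{g}:\PR{M}\to\PR{M'}$ sends $p(x)\mapsto p'(g(x))$, so both composites in the naturality square act on $p(x)$ by $x\mapsto f(g(x))$, and agreement on generators suffices.
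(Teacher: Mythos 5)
Your argument is correct and is exactly the justification the paper has in mind: the paper gives no proof beyond the word ``Clearly,'' since $\PR{M}$ is defined by generators $p(x)$ subject precisely to the relations \eqref{eq:defadd1P}--\eqref{eq:defscal1P}, which the discussion preceding remark \ref{rem:vardef} shows are equivalent to $f$ being $R$-quadratic. Your spelled-out factorization through the free module on $\{e_x\}$ and the naturality check are the standard bookkeeping the paper leaves implicit, so there is nothing to add.
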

In particular, the identity map of $M$ induces a natural  $R$-linear surjection 
\begin{equation}\label{epsilondef}
 \varepsilon: \PR{M} \twoheadrightarrow M
\end{equation}
which for $M=R$ may be regarded as kind of an augmentation, cf.\ section \ref{sec:PRR}; its kernel is determined in section \ref{sec:PRM}. 
\begin{corol}
 Let $M$ and $M'$ be $R$-modules, then 
\begin{equation*}
 \PR{M\oplus M'}\simeq \PR{M}\oplus \PR{M'}\oplus (M\tp M').
\end{equation*}
\end{corol}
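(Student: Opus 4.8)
The plan is to deduce this directly from Lemma~\ref{crosquR} and Proposition~\ref{prp:quadUniv} by a Yoneda argument. Fix an $R$-module $N$. Applying Proposition~\ref{prp:quadUniv} to $M\oplus M'$ gives a binatural isomorphism $R\ti Hom(\PR{M\oplus M'},N)\cong R\ti Quad(M\oplus M',N)$. Feeding the right-hand side into Lemma~\ref{crosquR}, and then applying Proposition~\ref{prp:quadUniv} once more to $M$ and to $M'$, yields
\begin{equation*}
R\ti Hom(\PR{M\oplus M'},N)\cong R\ti Hom(\PR{M},N)\oplus R\ti Hom(\PR{M'},N)\oplus R\ti Hom(M\tp M',N),
\end{equation*}
and the right-hand side equals $R\ti Hom\bigl(\PR{M}\oplus \PR{M'}\oplus (M\tp M'),N\bigr)$ since $R\ti Hom(-,N)$ converts finite direct sums in the first variable into direct sums. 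All of these isomorphisms are natural in $N$, so the Yoneda lemma provides the desired natural isomorphism $\PR{M\oplus M'}\simeq \PR{M}\oplus \PR{M'}\oplus (M\tp M')$.

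Alternatively, and more explicitly, I would build the isomorphism by hand. The two inclusions $M\hookrightarrow M\oplus M'$ and $M'\hookrightarrow M\oplus M'$ induce, by functoriality of $\PR{-}$, maps $\PR{M}\to \PR{M\oplus M'}$ and $\PR{M'}\to \PR{M\oplus M'}$; and the assignment $x\tp x'\mapsto p\bigl((x,0)+(0,x')\bigr)-p\bigl((x,0)\bigr)-p\bigl((0,x')\bigr)$ is well defined because the cross-effect of the universal quadratic map $p$ on $M\oplus M'$ is $R$-bilinear, by the relations \eqref{eq:defadd1P}--\eqref{eq:defadd2P}. Together these three maps give an $R$-linear map $\PR{M}\oplus \PR{M'}\oplus (M\tp M')\to \PR{M\oplus M'}$, whose inverse is produced from the two restrictions of $p$ and its cross-effect exactly as in the proof of Lemma~\ref{crosquR}; checking that the two composites are the identity amounts to the identity $f(x,x')=f_1(x)+f_2(x')+h(x\tp x')$ already recorded there, now applied to $f=p$.

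Since the statement is a formal consequence of the two preceding results, there is no genuine obstacle. The only points requiring a little care are, in the first approach, to note that the decomposition in Lemma~\ref{crosquR} is natural in $N$ (which it plainly is, being assembled from the cross-effect and the two restrictions) so that Yoneda applies, and, in the explicit approach, to verify the well-definedness of the $M\tp M'$-component and that the two constructed maps are mutually inverse.
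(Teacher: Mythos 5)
Your first argument is exactly the paper's proof: the corollary is stated there as an immediate consequence of Lemma~\ref{crosquR} via the representability in Proposition~\ref{prp:quadUniv}, which is your Yoneda step. The explicit construction you add is a fine (and correct) unpacking of the same idea, so the proposal is correct and matches the paper's approach.
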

This is an immediate consequence of lemma \ref{crosquR}. It means that the functor $P^2_R$ is quadratic, its cross-effect being the tensor product.
\begin{prop}\label{P2isgood}
 The functor $P^2_R$ is compatible with filtered colimits, and  for any right-exact sequence of R-modules 
$M_1\mr{f}M\mr{g}M_2\to 0$, the sequence
\begin{equation}\label{se:rexseq}
\xymatrix{
 \PR{M_1} \oplus (M_1 \tp M) \ar[rr]^{\qquad\quad (\PR{f}, w)}&& \PR{M} \ar[r]^{\PR{g}}&  \PR{M_2} \ar[r]& 0}
\end{equation}
is also exact, where 
$w(m_1\tp m) = d_p(f(m_1),m)$ for $(m_1,m)\in M_1\times M$.
\end{prop}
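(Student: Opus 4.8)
The plan is to verify exactness of \eqref{se:rexseq} by a standard presentation-chasing argument, using the universal property of $\PR{-}$ from Proposition \ref{prp:quadUniv} throughout. Compatibility with filtered colimits is immediate since $\PR{M}$ is defined by generators and relations involving only finitely many elements at a time, so each generator and each relation lifts to a finite stage of the colimit; hence I would dispose of that claim in one sentence. For the exact sequence, surjectivity of $\PR{g}$ follows because $g$ is surjective: every generator $p(m_2)$ of $\PR{M_2}$ is $\PR{g}(p(m))$ for any $m$ with $g(m)=m_2$. It remains to identify $\Ker\PR{g}$ with the image of $(\PR{f},w)$.

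First I would check that the composite is zero: $\PR{g}\PR{f}=\PR{gf}=0$ since $gf=0$, and $\PR{g}(w(m_1\tp m)) = \PR{g}(d_p(f(m_1),m)) = d_p(g(f(m_1)),g(m)) = d_p(0,g(m)) = 0$, using that $\PR{g}$ takes $d_p$ to $d_p$ (because $\PR{g}$ is induced by an $R$-linear, hence $R$-quadratic, map and cross-effects are natural). So the image of $(\PR{f},w)$ lies in $\Ker\PR{g}$, and we get an induced surjection $\varphi: \coker(\PR{f},w) \twoheadrightarrow \PR{M_2}$ (surjectivity from the previous paragraph). The heart of the proof is to produce an inverse, i.e.\ a well-defined $R$-linear map $\psi: \PR{M_2} \to \coker(\PR{f},w)$. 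By the universal property it suffices to give an $R$-quadratic map $q: M_2 \to \coker(\PR{f},w)$. Define $q(m_2)$ to be the class of $p(m)$ for any choice of $m\in M$ with $g(m)=m_2$; the whole point of adjoining $\IM w$ and $\IM\PR{f}$ to the relations is exactly to make this independent of the choice of lift. Concretely, if $g(m)=g(m')$ then $m-m' = f(m_1)$ for some $m_1\in M_1$, and one computes $p(m) - p(m') = p(f(m_1)+m') - p(m') = d_p(f(m_1),m') + p(f(m_1))$, which is $w(m_1\tp m') + \PR{f}(p(m_1))$, hence dies in the cokernel. So $q$ is well-defined as a set map; checking that it is $R$-quadratic (relations \eqref{eq:defadd1P}--\eqref{eq:defscal1P}) is routine because $p$ itself satisfies them and $g$ is additive and $R$-linear, so each relation for $q$ is the image under "choose lifts" of the corresponding relation for $p$ — here one uses that $g$ being surjective lets us choose the lifts of $m_2+n_2$, $rm_2$, etc.\ compatibly, or alternatively observes that any two lifts differ by $\IM f$ which is absorbed in the cokernel. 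This yields $\psi$, and by construction $\psi\circ(\text{class of }p(m)) $ sends the class of $p(m)$ to the class of $p(m)$ (with $m_2=g(m)$ and lift $m$), so $\psi$ is a two-sided inverse to $\varphi$. Hence $\varphi$ is an isomorphism, which is exactly exactness of \eqref{se:rexseq}.

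The main obstacle — really the only nontrivial point — is the well-definedness of $q$ on $\coker(\PR{f},w)$, since this is where the precise form of the map $w$ and the generator $p(f(m_1))$ both enter; the identity $p(a+b)-p(b) = d_p(a,b) + p(a)$ is the key computational input and is just a rearrangement of \eqref{eq:defadd1P} with one variable set to $0$. A secondary point of care is that when verifying $q$ is $R$-quadratic one must handle the mixed relation \eqref{eq:defadd2P} involving two different module elements and two ring elements: here one picks lifts $m,m'$ of $m_2, m_2'$ and notes $rm+sm'$ lifts $rm_2+sm_2'$, so the relation for $q$ at $(m_2,m_2',r,s)$ is literally the class of the relation for $p$ at $(m,m',r,s)$, which holds in $\PR{M}$; no extra correction terms are needed precisely because $g$ is $R$-linear. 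With these two checks in place the rest is bookkeeping.
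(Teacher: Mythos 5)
Your argument is correct, but it is organized differently from the paper's. The paper never touches the cokernel directly: it proves that for every $R$-module $N$ the induced sequence $0\to R\ti Quad(M_2,N)\to R\ti Quad(M,N)\to R\ti Quad(M_1,N)\times R\ti Hom(M_1\tp M,N)$ is exact, and then invokes Proposition \ref{prp:quadUniv} (representability of $R\ti Quad(-,N)$ by $\PR{-}$) to conclude that \eqref{se:rexseq} is exact; likewise the colimit statement is reduced to bijectivity of $R\ti Quad(M,N)\to\varprojlim_i R\ti Quad(M_i,N)$, whereas you argue directly that the presentation of $\PR{M}$ is finitary. You instead construct an explicit inverse $\psi:\PR{M_2}\to\coker(\PR{f},w)$ by lifting along $g$ and using the universal property, with the identity $p(f(m_1)+m')-p(m')=\PR{f}(p(m_1))+w(m_1\tp m')$ doing the real work for well-definedness; this is exactly the computation that the paper leaves implicit inside its claim that the Quad-sequence is exact (there it appears as: a quadratic $h$ on $M$ with $h\circ f=0$ and $d_h(f(m_1),m)=0$ descends to $M_2$). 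So the two proofs have the same computational core; the paper's dualization is slicker and uniform for both claims of the proposition, while your direct construction makes the role of the correction term $w$ and of $\IM\PR{f}$ completely explicit and avoids quoting the ``exactness detected after applying $Hom(-,N)$ for all $N$'' principle. Your verification that $q$ satisfies the defining relations \eqref{eq:defadd1P}--\eqref{eq:defscal1P} via compatible lifts is legitimate precisely because you first established independence of the lift, and your check that $\psi$ and the map induced by $\PR{g}$ are mutually inverse on generators completes the argument; no gap remains.
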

\begin{proof}
Suppose $M=\varinjlim\limits_i M_i$. 
By proposition \ref{prp:quadUniv} it suffices to show that for any $R$-module $N$, the map $R\ti Quad(M,N) \to \varprojlim\limits_i R\ti Quad(M_i,N)$ given by restriction from $M$ to the $M_i$'s  is bijective. Injectivity is clear. Now given a family of compatible $R$-quadratic maps $f_i: M_i \to N$ we have to prove that they can be glued together to an $R$-quadratic map from $M$ to $N$, but this is routine since \Rq maps are defined by algebraic relations.

Now consider  the exact sequence 
$M_1\mr{f}M\mr{g}M_2\to 0$. It is easy to prove that the sequence
\begin{multline*}
 \xymatrix{0\to R\ti Quad(M_2,N)\ar[r]^{\quad\ g^*}&
R\ti Quad(M,N)}\\
\xymatrix{
&\ar[rr]^{(f^*,(f\tp Id)^*d_-{})\qquad\qquad\qquad\qquad\quad}&& 
R\ti Quad(M_1,N)\times R\ti Hom(M_1\tp M,N)} 
\end{multline*}
is exact for any $N$. Thus by proposition \ref{prp:quadUniv}  sequence \eqref{se:rexseq} is also exact.
\end{proof}

In order to determine the structure of $\PR{M}$ we must first study the modules $\GA{M}$ and $\PR{R}$; this is the contents of the next two sections.\medspace

\section{Homogenous $R$-quadratic maps}
\label{sec:Gamma2}
The notion of homogenous $R$-polynomial map of degree $n$ is classical; by definition such a map admits a universal factorization through the homogenous term  $\Gamma^n_R(M)$ of the divided power algebra $\Gamma_R(M)$ on $M$, see \cite{Rob}. This plays a crucial role in the definition of strict polynomial functors \cite{FS}. We here provide an exact sequence for $\GA{M}$ which degenerates to a wellknown sequence for $R=\Z$ but seems not to appear in the literature for general rings $R$.

Recall that $\GA{M}$ is defined to be the degree 2 component of the divided power algebra $\Gamma_R(M)$. As an $R$-module it is generated by elements $\ga{x}$ and symbols $\gamma_1(x)\gamma_1(y)$ which are $R$-bilinear in $x,y\in M$, subject to the relations 
  $\ga{x+y}=\ga{x}+\ga{y}+ \gamma_1(x)\gamma_1(y)$ and $\ga{rx}=r^2\ga{x}$.

 By definition of $\GA{M}$ we have an $R$-linear homomorphism
\begin{equation}\label{eq:defw}
  w: \SY{M} \to \GA{M},\quad w(xy) = \gamma_1(x)\gamma_1(y) = \gamma_2(x+y) - \gamma_2(x) - \gamma_2(y)\,
\end{equation}
$x,y\in M$. In order to exhibit the kernel and cokernel of $w$ we need to recall the notion of {\em Frobenius twist}\/.

\begin{defn} Suppose that $2M=0$. Then the right
Frobenius twist $M^{[1]}$ of $M$ is defined to be the $R$-bimodule whose left $R$-action is the given one on $M$ but the right $R$-action is given by $ xr  =  r^2 x$ for $r\in R$, $x\in M$.

\end{defn} 

In particular, the 2-torsion subgroup ${}_2M =\{x\in M\,|\,2x=0\}$ and $M/2M$ admit a right Frobenius twist.

By construction of $\GA{M}$ it is clear that there is an isomorphism $\coker{w} \cong (R\tp_{\Z} M)/U$ sending $\gamma_2(x)$ to $\overline{1\tp x}$, $x\in M$, where $U$ is the submodule of the extended $R$-module $R\tp_{\Z} M$ generated by the elements $1\tp r x - r^2 \tp x$, $(r , x)\in R \times  M$. As $U$ contains $2r \tp x = {}- (r\tp 2x - 4r\tp x)$  we see that 
\begin{equation*}
 (R\tp_{\Z} M)/U \cong (R/2R \tp_{\Z} M)/(q\tp 1)U  
\end{equation*}
where $q: R\twoheadrightarrow R/2R$ is the canonical projection. But $U$ is the $\Z$-submodule of $R\tp_{\Z} M$ generated by the elements $s\tp r x - s r^2 \tp x$, $s,r \in R$, $x\in M$, so
\begin{equation*}
  (R/2R \tp_{\Z} M)/(q\tp 1)U \cong (R/2R)^{[1]} \tp M\,.
\end{equation*}
 Thus there is a canonical isomorphism 
\begin{equation*}
  \coker{w} \cong (R/2R)^{[1]} \tp M
\end{equation*}
sending $\overline{\gamma_2(x)}$ to $\bar{1}\tp x$, $x\in M$.

On the other hand, note that for $r \in {}_2R$ and $x\in M$ one has $w(r x^2) = 0$ as $w(x^2)=2\gamma_2(x)$. This means that the homomorphism of $R$-modules
\begin{equation} \label{eq:defd}
d:  ({}_2R)^{[1]} \tp M \to \SY{M}\,,\quad d(r\tp x) = r x^2\,  
\end{equation}
has its image in $\Ker{w}$. Summarizing the above observations we obtain an exact sequence of $R$-modules 
\begin{equation}\label{se:Gsequ} 
\SY{M}/\IM{d}  \mr{\bar{w}} \GA{M} \mr{\rho} (R/2R)^{[1]} \tp M \to 0
\end{equation}
where $\rho(\gamma_2(x)) = \bar{1}\tp x$.

\begin{lem}\label{lm:GAsequ} Sequence \eqref{se:Gsequ} is short exact if $M$ is free.
\end{lem}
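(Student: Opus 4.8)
The plan is to reduce the claim, via Lemma~\ref{crosquR} and its formal consequences for $\GA{M}$ (which is also a quadratic functor with cross-effect the tensor product), to the case $M=R^n$, and then further to the single free generator case $M=R$, where everything can be computed by hand. First I would record that all three functors $\SY{-}$, $\GA{-}$, and $(R/2R)^{[1]}\tp(-)$ are quadratic (the first and third have cross-effect $M\tp M'$ resp.\ $0$; for $\GA{-}$ the cross-effect at $(M,M')$ is again $M\tp M'$ via $\gamma_1(x)\gamma_1(x')\mapsto x\tp x'$), and that $\IM{d}$ is a subfunctor of $\SY{-}$ which is again quadratic with cross-effect $0$ — indeed $d$ involves only the ``diagonal'' part $rx^2$. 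Consequently $\SY{M}/\IM{d}$ is quadratic with cross-effect $M\tp M'$. Now the maps $\bar w$ and $\rho$ in \eqref{se:Gsequ} are natural transformations between quadratic functors, and on cross-effects $\bar w$ restricts to the identity $M\tp M'\to M\tp M'$ while $\rho$ restricts to $0$. Hence exactness of \eqref{se:Gsequ} on the cross-effect parts is automatic, and for $M=M_1\oplus M_2$ the sequence splits as a direct sum of the corresponding sequences for $M_1$, $M_2$ and the (trivially exact) cross-effect sequence. By induction this reduces short-exactness for $M=R^n$ to short-exactness for $M=R$.

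Next I would treat $M=R$ directly. Here $\SY{R}\cong R$ via $x^2\mapsto x$ is standard, and $\GA{R}\cong R$ with $\gamma_2(x)\leftrightarrow x$ — this is the classical computation that $\Gamma^2_R(R)$ is free of rank one on $\gamma_2(1)$; under these identifications $w:\SY{R}\to\GA{R}$ becomes multiplication by $2$, since $w(1\cdot1)=\gamma_1(1)\gamma_1(1)=\gamma_2(2)-2\gamma_2(1)=2\gamma_2(1)$. Likewise $d:({}_2R)^{[1]}\tp R\to\SY{R}$ becomes the inclusion ${}_2R\hookrightarrow R$ (after identifying $({}_2R)^{[1]}\tp_R R\cong{}_2R$ as abelian groups), so $\SY{R}/\IM{d}\cong R/{}_2R$. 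Then $\bar w:R/{}_2R\to R$, $\bar1\mapsto 2$, is injective precisely because its kernel is $({}_2R + {}_2R)/{}_2R = 0$ — more carefully, if $2a\in{}_2R$... no: $\Ker\bar w=\{\,\bar a : 2a=0\,\}=\overline{{}_2R}=0$. The cokernel of $\bar w$ is $R/2R=(R/2R)^{[1]}\tp_R R$, matching the right-hand term, and $\rho$ is the evident projection. This establishes short-exactness for $M=R$, hence for all $M=R^n$, hence for all free $M$ by passing to filtered colimits (all four functors commute with filtered colimits, and exactness is preserved).

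The main obstacle I anticipate is bookkeeping rather than conceptual: making the ``quadratic functor'' reduction airtight requires being slightly careful that the cross-effect decomposition of Lemma~\ref{crosquR} is compatible, as a splitting of sequences, simultaneously with $\SY{-}$, $\IM{d}$, $\GA{-}$ and $(R/2R)^{[1]}\tp(-)$, and that the maps $\bar w,\rho$ respect this splitting on the nose (not just up to some twist). The other mild subtlety is handling the Frobenius-twisted tensor factors $({}_2R)^{[1]}\tp_R M$ and $(R/2R)^{[1]}\tp M$ under base change $M\mapsto R^n$: one must check these also distribute over $\oplus$ and commute with filtered colimits, which they do since $-\tp_R N$ always does. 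Once these are in place the free-rank-one computation is elementary and the induction and colimit steps are formal.
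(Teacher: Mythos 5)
Your argument is correct: the rank-one computation ($\SY{R}\cong R$, $\GA{R}\cong R\,\ga{1}$, $w$ becoming multiplication by $2$, $\IM{d}={}_2R$, hence $\bar w$ injective with cokernel $R/2R$) is right, and the cross-effect bookkeeping you worry about does go through: $w$ carries the cross term $xx'$ of $\SY{M\oplus M'}$ identically onto the cross term $\gamma_1(x)\gamma_1(x')$ of $\GA{M\oplus M'}$, $\rho$ kills it, and $\IM{d}$ is additive because $2r\,xx'=0$ for $r\in{}_2R$. The paper argues more directly: choosing a basis $(e_i)$ of $M$, it quotes the standard bases of $\SY{M}$ (the $e_ie_j$, $i\le j$) and of $\GA{M}$ (the $\ga{e_i}$ together with the $\gamma_1(e_i)\gamma_1(e_j)$, $i<j$), notes that $w$ sends basis elements to basis elements except for $e_i^2\mapsto 2\ga{e_i}$, and reads off $\Ker{w}=\bigoplus_i({}_2R)\,e_i^2=\IM{d}$ in one line, uniformly in the rank. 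Your route is essentially a functorial repackaging of the same input --- the cross-effect decomposition of $\Gamma^2_R$ together with $\GA{R}\cong R$ is exactly the basis statement the paper cites --- at the cost of two extra layers, the induction on the number of summands and the filtered-colimit passage to infinite rank. The colimit step is in fact dispensable: the direct-sum decompositions you use hold verbatim for arbitrary index sets, so your diagonal-block computation of $\Ker{w}$ already covers free modules of any rank, which is what the paper's basis argument achieves at once. What your version buys is an explicit, natural splitting of the sequence and independence from quoting a basis of $\GA{M}$; what the paper's version buys is brevity.
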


\begin{proof} Let $(e_i)_{i\in I}$ be a basis of $M$. Then the elements $(e_i\tp e_j)$, $(i, j)\in I^2$ such that $i\le j$ for some total ordering of $I$, form a basis of $\SY{M}$. On the other hand, it is known that the elements $\gamma_2(e_i)$, $i\in I$, together with the elements $\gamma_1(e_i)\gamma_1(e_j)=w(e_ie_j)$, $(i, j)\in I^2$ such that $i< j$, form a basis of $\GA{M}$. Thus $\Ker{w} = \bigoplus_{i\in I} ({}_2R) \,e_i^2 \subset \IM{d}$, whence $\Ker{w}=\IM{d}$.
\end{proof}

Thus taking Dold-Puppe derived functors ${\mathcal D}_nT(-) = L_nT(-,0)$ for endofunctors $T$ of $R$-{\bf Mod} 
 we get the following terminal of a long exact homotopy sequence 
 \begin{multline*} 
{\mathcal D}_1\coker{d}(M) \to {\mathcal D}_1\GA{M} \to {\mathcal D}_1  ((R/2R)^{[1]} \tp -)(M) \\
\to {\mathcal D}_0\coker{d}(M) \to {\mathcal D}_0\GA{M} \to {\mathcal D}_0  ((R/2R)^{[1]} \tp -)(M) \to 0
 \end{multline*}
 Let $ M_1\mr{u_1} M_0 \mr{u_0} M \to 0$ be a partial free resolution of $M$. Denote by $(1,1)$ respectively $\rho_i: M_0\oplus M_0 \to M_0$ the folding map, sending $(x,y)$ to $x+y$, respectively the retraction to the $i$-th summand; and let
 $\nabla$ be  the restriction of $T((1,1))$
to the submodule $T(M_0|M_1) = \Ker{(T(\rho_1),T(\rho_2))^t: T(M_0\oplus M_0) \to T(M_0) \oplus T(M_0)}$.
Then 
\begin{equation*}
 {\mathcal D}_0T(M) = \coker\Big( T(M_1) \oplus T(M_0|M_1) \mr{\tilde{u}_1} T(M_0)\Big)
\end{equation*}
where $\tilde{u}_1 = (T(u_1) , \nabla \,T(1|u_1))$. Thus by right exactness of the tensor product we have ${\mathcal D}_0T \cong T$ for $T=
({}_2R)^{[1]} \tp -$ and $T={\rm Sym}^2_R$, hence also for $T=\coker{d}$ by the snake lemma.
Moreover, one has ${\mathcal D}_1  ((R/2R)^{[1]} \tp -) = {\rm Tor}_1^R((R/2R)^{[1]},-)$ since the functor  $T=(R/2R)^{[1]} \tp -$ is additive. We thus get the following 

\begin{thm}\label{th:GMsequ} For any $R$-module $M$ there is a natural exact sequence 
\begin{equation*}
{\rm Tor}_1^R((R/2R)^{[1]},M) \mr{\tau} 
\SY{M}/\IM{d}  \mr{\bar{w}} \GA{M} \mr{\rho} (R/2R)^{[1]} \tp M \to 0 \end{equation*}
where the connecting homomorphism $\tau$ is explicitely given as follows. 
Let $(e_i)_{i\in I}$ and $(e_j)_{j\in J}$ be basis of $M_0$ and $M_1$, resp., and let $u_1$ be represented by the matrix $(a_{ij})_{(i,j)\in I\times J}$, $a_{ij}\in R$. Let $x= \sum_j \bar{r}_j^{[1]} \tp  e_j \in \Ker{(1\tp u_1)}$, $r_j\in R$. Then for all $i\in I$ there exists $s_i\in R$ such that $\sum_j r_j a_{ij}^2 =2s_i$, and we have 
\begin{equation*} 
\tau[x] \,=\, \sum_i s_i u_0(e_i)^2 \,+\, \sum_j \sum_{i_1<i_2}r_j \,a_{i_1j} \, a_{i_2j} \,u_0(e_ {i_1}) \, u_0(e_{i_2}) +\IM{d}
\end{equation*}
\end{thm}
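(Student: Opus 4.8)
The exact sequence itself is already available: it is the five‑term tail of the Dold--Puppe long exact sequence attached to the short exact sequence of endofunctors $0\to\coker{d}\mr{\bar w}\GA{-}\mr{\rho}(R/2R)^{[1]}\tp-\to0$ of $R$-{\bf Mod}, which is exact on free modules by Lemma \ref{lm:GAsequ}, together with the identifications ${\mathcal D}_0\coker{d}\cong\coker{d}$, ${\mathcal D}_0\GA{-}\cong\GA{-}$, ${\mathcal D}_0((R/2R)^{[1]}\tp-)\cong(R/2R)^{[1]}\tp-$ and ${\mathcal D}_1((R/2R)^{[1]}\tp-)\cong{\rm Tor}_1^R((R/2R)^{[1]},-)$ obtained in the preceding discussion; the map $\tau$ is the connecting homomorphism. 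So the only thing left to prove is the explicit formula for $\tau$, and the plan is to trace the definition of this connecting homomorphism through the simplicial model already used above to compute ${\mathcal D}_0$.

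That model is the simplicial free resolution $P_\bullet$ of $M$ with $P_0=M_0$, $P_1=M_0\oplus M_1$, faces $d_1$ the projection onto $M_0$ and $d_0\colon(v,w)\mapsto v+u_1(w)$, degeneracy $s_0$ the inclusion $M_0\hookrightarrow M_0\oplus M_1$, extended in higher degrees to a free resolution of $M$; its normalised Moore complex begins with $M_1\mr{u_1}M_0$ (degree $1$ part $N_1=\Ker d_1$, differential induced by $d_0$), and the augmentation $P_0\to M$ is $u_0$. Applying the short exact sequence of functors degreewise and passing to normalised complexes, $\tau[x]$ is computed by the usual recipe: lift the $1$-cycle $x$ of $(R/2R)^{[1]}\tp P_\bullet$ to an element $\tilde x$ of $N_1(\GA{P_\bullet})$, apply the differential, notice that the result lies in $N_0(\coker{d}(P_\bullet))=\SY{M_0}/\IM{d}$ (viewed inside $\GA{M_0}$ via $\bar w$, which is injective since $M_0$ is free), and finally push it forward to $\SY{M}/\IM{d}$ along the map induced by $u_0$.

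The delicate step — and the one I expect to cause the only real difficulty — is the identification of $N_1(\GA{P_\bullet})$ and the choice of lift, since $\GA{-}$ is quadratic rather than additive: here $N_1(\GA{P_\bullet})=\Ker(\GA{d_1})=(M_0\tp M_1)\oplus\GA{M_1}$, and $\rho$ restricts to $0$ on the cross-effect summand $M_0\tp M_1$ and to the map $\ga{e_j}\mapsto\bar{1}\tp e_j$ on $\GA{M_1}$. Now any degree‑$1$ cycle of $(R/2R)^{[1]}\tp P_\bullet$ has the form $x=\sum_j\bar{r}_j^{[1]}\tp e_j$ ($M_1$ being free on the $e_j$), and it is a cycle exactly when $\sum_j a_{ij}^2r_j\in2R$ for all $i$, i.e. when there exist $s_i\in R$ with $\sum_j a_{ij}^2r_j=2s_i$ — which is the hypothesis of the theorem. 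One then takes the lift $\tilde x=\sum_j r_j\ga{e_j}\in\GA{M_1}\subseteq N_1(\GA{P_\bullet})$, for which indeed $\rho(\tilde x)=x$.

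It remains to apply the degree‑$1$ differential, which is $\GA{d_0}$, and simplify. From $d_0(e_j)=u_1(e_j)=\sum_i a_{ij}e_i$ and the divided‑power identities $\ga{\sum_i v_i}=\sum_i\ga{v_i}+\sum_{i<k}\gamma_1(v_i)\gamma_1(v_k)$, $\ga{a_{ij}e_i}=a_{ij}^2\ga{e_i}$, $\gamma_1(e_i)\gamma_1(e_k)=w(e_ie_k)$ and $\gamma_1(e_i)^2=2\ga{e_i}$, together with $\sum_j a_{ij}^2r_j=2s_i$, one gets
\begin{equation*}
\GA{d_0}(\tilde x)=\sum_i\Big(\sum_j r_ja_{ij}^2\Big)\ga{e_i}+\sum_{i<k}\Big(\sum_j r_ja_{ij}a_{kj}\Big)w(e_ie_k)=w\Big(\sum_i s_ie_i^2+\sum_{i<k}\sum_j r_ja_{ij}a_{kj}\,e_ie_k\Big),
\end{equation*}
so this element represents the class of $\sum_i s_ie_i^2+\sum_{i<k}\sum_j r_ja_{ij}a_{kj}\,e_ie_k$ in $\SY{M_0}/\IM{d}$; pushing it forward along the map induced by $u_0$ and reindexing the double sum over pairs $i_1<i_2$ yields the asserted formula for $\tau[x]$. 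I would close with the consistency check (automatic, $\tau[x]$ being intrinsically defined) that the right‑hand side depends neither on the chosen $s_i$ nor on the chosen lifts $r_j$ of $\bar{r}_j$: two admissible $s_i$ differ by an element of ${}_2R$, whose contribution is in $\IM{d}$, and replacing $r_j$ by $r_j+2c_j$ changes the right‑hand side by $\sum_j c_j\,u_0(u_1(e_j))^2$, which vanishes since $u_0u_1=0$.
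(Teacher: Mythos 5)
Your proposal is correct and follows the same route as the paper: the exact sequence is obtained from the Dold--Puppe long exact homotopy sequence attached to Lemma \ref{lm:GAsequ} together with the identifications of ${\mathcal D}_0$ and ${\mathcal D}_1$, and the formula for $\tau$ by chasing the connecting homomorphism through a simplicial free resolution. In fact your explicit chase (lifting $\sum_j \bar r_j^{[1]}\tp e_j$ to $\sum_j r_j\ga{e_j}$ in $N_1(\GA{P_\bullet})$ and applying $\GA{d_0}$) supplies exactly the detail the paper summarizes as ``going through the snake lemma type diagram defining $\tau$''.
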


The explicit formula for $\tau$ is obtained by going through the snake lemma type diagram defining $\tau$. 

It is known that $w$ is injective for $R=\Z$; but $\tau$ is non trivial in general, even for principal rings: 

\begin{examps} Let $R=\Z[\sqrt{2}]$ and $M=R/\sqrt{2}R\cong \Z/2\Z$. Then $\IM{d}=0$ as ${}_2R=0$, $\SY{M} \cong M$ and $w=0$ since $w(\bar{1}) = 2\gamma_2(\bar{1}) = \gamma_2(\sqrt{2}\bar{1}) = \gamma_2(0)=0$. Hence $\tau$ is surjective and non trivial.
\end{examps}

On the other hand, it is easy to deduce from theorem \ref{th:GMsequ}  sufficient conditions forcing $\tau$ to be trivial, as follows.

\begin{corol}\label{Rprinc} Suppose that $R$ is principal, and let $M= \bigoplus_{i\in I} R/a_iR$, $a_i\in R$. Then $\tau=0$ if for any $r \in R$ and $i\in I$, $2|r a_i^2$ implies $2|r a_i$. In particular, $\tau =0$ for all $M$ if $2$ is trivial or a product of two-by-two non associated primes (no powers). 
\end{corol}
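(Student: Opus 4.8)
The plan is to make Theorem \ref{th:GMsequ} explicit for the obvious free presentation of $M$ over the principal domain $R$ and read off $\tau$. First I would dispose of two trivialities: if $2=0$ then $(R/2R)^{[1]}=R^{[1]}$ is torsion-free as a right $R$-module, hence flat over $R$, so ${\rm Tor}_1^R((R/2R)^{[1]},M)=0$ and $\tau=0$ for every $M$; and a cyclic summand $R/a_iR$ with $a_i$ invertible is zero while one with $a_i=0$ is free and thus contributes nothing to ${\rm Tor}_1$, so I may assume $2$ is a non-zero-divisor and every $a_i$ is a nonzero non-unit. Writing $M=\bigoplus_{i\in I}Re_i/(a_ie_i)$, take the free resolution $M_1=M_0=\bigoplus_{i\in I}Re_i$ with $u_0(e_i)=\bar e_i$ and $u_1(e_i)=a_ie_i$; then $u_1$ is injective, the matrix $(a_{ij})$ of Theorem \ref{th:GMsequ} is diagonal, $a_{ij}=a_i\delta_{ij}$, and $\Ker{(1\tp u_1)}={\rm Tor}_1^R((R/2R)^{[1]},M)$.

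Now I would plug this into the explicit formula for $\tau$. The double sum $\sum_j\sum_{i_1<i_2}r_j a_{i_1j}a_{i_2j}\,u_0(e_{i_1})u_0(e_{i_2})$ vanishes identically, since $a_{i_1j}a_{i_2j}=0$ whenever $i_1\neq i_2$. So for $x=\sum_j\bar r_j^{[1]}\tp e_j\in\Ker{(1\tp u_1)}$ one gets $\tau[x]=\sum_i s_i\,\bar e_i^{\,2}+\IM{d}$, where $2s_i=r_ia_i^2$. Unwinding the twisted right action, $(1\tp u_1)(x)=\sum_i\overline{r_ia_i^2}^{\,[1]}\tp e_i$ in $\bigoplus_i(R/2R)^{[1]}$, so $x\in\Ker{(1\tp u_1)}$ is equivalent to $2\mid r_ia_i^2$ for every $i$. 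By hypothesis this forces $2\mid r_ia_i$, say $r_ia_i=2t_i$; cancelling the non-zero-divisor $2$ in $2s_i=r_ia_i^2=2t_ia_i$ gives $s_i=t_ia_i$, and therefore $s_i\,\bar e_i^{\,2}=t_i(a_i\bar e_i)\bar e_i=0$ in $\SY{M}$ because $a_i\bar e_i=0$ in $R/a_iR$. Summing over $i$ yields $\tau[x]=0$, hence $\tau=0$.

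Finally, for the ``in particular'' I would check that the divisibility condition of the corollary is automatic in the two listed cases. It holds vacuously when $2=0$: then $2\mid ra^2$ means $ra^2=0$, so $ra=0$ in the domain $R$. And if $2=p_1\cdots p_k$ with the $p_\ell$ pairwise non-associated primes, then for $2\mid ra^2$ each prime $p_\ell$ divides $ra^2$, hence divides $r$ or $a$, hence divides $ra$; since the $p_\ell$ are pairwise non-associated primes in the UFD $R$, their product $2$ divides $ra$, as required. The argument is essentially mechanical once Theorem \ref{th:GMsequ} is available; the only delicate points are keeping track of the twisted right action on $(R/2R)^{[1]}$ when identifying $\Ker{(1\tp u_1)}$, and spotting that $a_i$ annihilates $\bar e_i$ — the observation that collapses the surviving term $\sum_i s_i\bar e_i^{\,2}$ to zero and thereby reduces the whole statement to the stated divisibility property of $2$.
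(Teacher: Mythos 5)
Your proof is correct and follows essentially the same route as the paper: take the diagonal presentation $(a_{ij})=\mathrm{Diag}(a_i)$, note the off-diagonal term of $\tau$ vanishes, use the hypothesis to write $s_i$ as a multiple of $a_i$, and kill $s_iu_0(e_i)^2$ via $a_iu_0(e_i)=0$. The extra care you take (the case $2=0$, the twisted identification of $\Ker(1\tp u_1)$, and the verification of the ``in particular'' clause) only fills in details the paper leaves implicit.
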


In fact, in this case we may take  $(a_{ij}) = Diag(a_i)$, whence by hypothesis, each $s_i$ in theorem \ref{th:GMsequ} is of the form $s_i=s_i'a_i$, $s_i'\in R$. Thus $\tau[x]=\sum_is_iu_0(e_i)^2 =
\sum_is_i'u_0(a_ie_i)u_0(e_i) =0$.

Note that the last condition in corollary \ref{Rprinc} is satisfied for $R=\Z$, which reproduces the wellknown fact that $w$ is injective for all $\Z$-modules $M$.

\section{Quadratic derivations and the module $\PR{R}$}
\label{sec:PRR}

Recall that the group ring $\Z[G]$ decomposes as $\Z[G] = \eta(\Z)  \oplus I(G)$ where $\eta: \Z \to \Z[G]$ is the unit map; correspondingly, the canonical injection $G\to \Z[G] $ decomposes as $g \mapsto 1 +(g-1)$, and the component  $g\mapsto g-1$ is the universal derivation on $G$. We find   similar decompositions of $\PR{R}$ and of the map $p$, leading to the notion of quadratic derivation on a ring $R$.

For $r\in R$ we denote by $p_r$ (resp $p_{[r]}$) the element 
$p(r)-r\,p(1)$ (resp. $p(r)-r^2\,p(1)$) in $\PR{R}$.

\begin{prop}\label{pr:PRRrel}
 $\PR{R}$ is generated by elements $p(1)$ and $\{p_r\}_{r\in R}$ (resp. by $p(1)$ and $\{p_{[r]}\}_{r\in R}$) subject to the relations :
\begin{align}
 p_{r+s}&=p_r+p_s+rs\,p_2&
(\text{resp. } p_{[r+s]}&=p_{[r]}+p_{[s]}+rs\,p_{[2]})\label{eq:defaddPR}\\
p_{rs}&=r\,p_s+s^2\,p_r&
(\text{resp. } p_{[rs]}&=r\,p_{[s]}+s^2\,p_{[r]}).\label{eq:defscalPR}
\end{align} 
\end{prop}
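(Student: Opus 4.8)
The plan is to apply the general presentation of $\PR{M}$ from the defining relations \eqref{eq:defadd1P}--\eqref{eq:defscal1P} to the special case $M=R$ and then perform a change of generators. First I would observe that by definition $\PR{R}$ is generated by the elements $p(r)$, $r\in R$. Since $R$ is a free $R$-module on the single generator $1$, the map $\varepsilon:\PR{R}\twoheadrightarrow R$ of \eqref{epsilondef} splits, and more importantly every $p(r)$ can be rewritten using $p(1)$ together with the correction terms $p_r = p(r)-r\,p(1)$ (resp. $p_{[r]}=p(r)-r^2p(1)$); hence $\{p(1)\}\cup\{p_r\}_{r\in R}$ (resp. $\{p(1)\}\cup\{p_{[r]}\}_{r\in R}$) is a generating set. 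This is a purely formal substitution and requires no work.

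Next I would substitute $M=R$, $x=1$ into the three families of defining relations and translate them into the new generators. For the ``$p_r$'' version: relation \eqref{eq:defscal1P} with $x=1$ reads $p(rs)-r^2p(s)-s\,p(r)+r^2s\,p(1)=0$; substituting $p(t)=p_t+t\,p(1)$ throughout and cancelling the $p(1)$-terms yields exactly $p_{rs}=r\,p_s+s^2p_r$, which is \eqref{eq:defscalPR}. Relation \eqref{eq:defadd2P} with $x=y=1$, after the same substitution, collapses (the $p(1)$-coefficients cancel because $1+s$ on the left matches $r+s$ appropriately) to $p_{r+s}=p_r+p_s+rs\,p_2$, which is \eqref{eq:defaddPR}; one should double-check that relation \eqref{eq:defadd1P} with $x=y=z=1$ becomes the special case $r=s=1$ of this and so is redundant. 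For the ``$p_{[r]}$'' version the computation is the same but using $p(t)=p_{[t]}+t^2p(1)$; here relation \eqref{eq:defscal1P} gives $p_{[rs]}-r\,p_{[s]}-s^2p_{[r]} + (\text{multiple of }p(1))$, and the $p(1)$-coefficient is $r^2s^2 - r\,s^2 - s^2 r^2 + r^2 s$... so I must verify it genuinely vanishes; it does, once one remembers these are relations \emph{modulo} the relations, i.e. one is computing inside $\PR{R}$ where $p(1)$ is a free generator only after the quotient — so the coefficient of $p(1)$ must be shown to be zero as an element of $R$, which is a direct check.

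The substance of the proof, and the only genuine obstacle, is verifying that these relations are not merely \emph{consequences} of the originals but a complete set: i.e. that the $R$-module with the presentation in the statement maps isomorphically (not just surjectively) onto $\PR{R}$. The clean way to do this is to exhibit a two-sided inverse at the level of presentations: define an $R$-module $Q$ by the generators and relations \eqref{eq:defaddPR}--\eqref{eq:defscalPR} (plus the free generator $p(1)$), define a map $Q\to\PR{R}$ by $p(1)\mapsto p(1)$, $p_r\mapsto p_r$ (well-defined by the computation above), and define a map $\PR{R}\to Q$ by $p(r)\mapsto p_r + r\,p(1)$, checking that this respects the three defining relations of $\PR{R}$ — which is exactly the reverse of the substitution computation. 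The composites are identities on generators, so the two modules agree. I expect the bookkeeping in the ``$p_{[r]}$'' case (tracking that mixed terms like $r^2s$ versus $s^2r$ cancel correctly) to be the fiddliest point, but it is entirely mechanical; there is no conceptual difficulty, since everything is governed by the universal property of Proposition \ref{prp:quadUniv} and the fact that $R$ is free of rank one over itself.
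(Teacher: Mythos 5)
Your proposal is correct and takes essentially the same route as the paper: obtain \eqref{eq:defaddPR} and \eqref{eq:defscalPR} by setting $x=y=1$ in \eqref{eq:defadd2P} and $x=1$ in \eqref{eq:defscal1P}, and conversely verify that the defining relations \eqref{eq:defadd1P}--\eqref{eq:defscal1P} are consequences of the new ones, which is exactly the paper's ``simple computation'' establishing the two-sided identification. The only caveat is that this converse step is slightly more than ``the reverse of the substitution'': the original relations must be checked for \emph{arbitrary} $x,y,z\in R$ (viewed as module elements), not just at $1$, though this remains a mechanical verification using both families of new relations.
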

\begin{proof}
 Taking $x=y=1$ in relation \eqref{eq:defadd2P} we get  
\eqref{eq:defaddPR}. Taking $x=1$ in relation \eqref{eq:defscal1P} we get \eqref{eq:defscalPR}. Conversely, a simple computation shows that the relations \eqref{eq:defadd1P}, \eqref{eq:defadd2P} and \eqref{eq:defscal1P} are consequences of \eqref{eq:defaddPR} (or \eqref{eq:defscalPR}).
\end{proof}

\begin{rem}
 The relations \eqref{eq:defscalPR} are not symmetric in $r$  and $s$. Permuting $r$ and $s$ we get
\begin{align}
 (r^2-r)p_s&=(s^2-s)p_r&
(\text{resp. } (r^2-r)p_{[s]}&=(s^2-s)p_{[r]}).\label{eq:defcsalsym}
\end{align}

\end{rem}

\begin{corol}\label{pr:decPRR}
\begin{MYitemize}
 \item The submodule of $\PR{R}$ generated by $p(1)$ is free and 
is a direct summand of $\PR{R}$.
 \item The submodules of $\PR{R}$ generated by the elements $p_r$ and by the
elements $p_{[r]}$ 
are isomorphic. They represent the \Rq maps vanishing on 0 and 1.

 \item Any \Rq map $R\to N$ has a unique 
decomposition as sum of an $R$-linear map (resp. a homogeneous \Rq map) and an \Rq map vanishing on 0 and 1.
\end{MYitemize}
\end{corol}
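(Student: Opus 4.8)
The plan is to read off all three assertions from the presentation of $\PR{R}$ given in Proposition \ref{pr:PRRrel}, using the symmetric relations \eqref{eq:defcsalsym} to control how the generators $p_r$ (or $p_{[r]}$) interact with the free generator $p(1)$.

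First I would establish the decomposition at the level of generators. By Proposition \ref{pr:PRRrel}, $\PR{R}$ is generated by $p(1)$ and $\{p_r\}_{r\in R}$; let $A$ be the submodule generated by $p(1)$ and $B$ the submodule generated by $\{p_r\}_{r\in R}$, so that $\PR{R}=A+B$. To see that the sum is direct and that $A$ is free on $p(1)$, I would exhibit a retraction: the $R$-linear map $\PR{R}\to R$ induced by the $R$-linear (hence $R$-quadratic) identity map $R\to R$ is exactly $\varepsilon$ of \eqref{epsilondef}; it sends $p(1)\mapsto 1$ and, since each $p_r = p(r)-r\,p(1)$ maps to $r-r\cdot 1 = 0$, it kills $B$. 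Thus $\varepsilon$ splits off $A\cong R$ as a direct summand and $B=\Ker\varepsilon$, proving the first bullet and, simultaneously, that $B$ represents the $R$-quadratic maps vanishing on $0$ and $1$ (a quadratic map $f:R\to N$ vanishes on $1$ iff the induced map $\PR{R}\to N$ kills $p(1)$, i.e.\ factors through $B$; vanishing on $0$ is automatic). The same argument with $p_{[r]}=p(r)-r^2 p(1)$ in place of $p_r$ shows that the submodule $B'$ generated by the $p_{[r]}$ is also a complement of $A$ in $\PR{R}$, so $B\cong \PR{R}/A\cong B'$ naturally, giving the isomorphism claimed in the second bullet.

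For the third bullet, I would translate the internal direct sum decomposition $\PR{R}=A\oplus B$ (or $A\oplus B'$) into the statement about maps via Proposition \ref{prp:quadUniv}. An $R$-quadratic map $f:R\to N$ corresponds to an $R$-linear map $\tilde f:\PR{R}\to N$; decompose $\tilde f$ as $\tilde f|_A + \tilde f|_B$ composed with the two projections. The restriction to $A\cong R$ (via $p(1)\mapsto 1$) corresponds to the $R$-linear map $x\mapsto x\,\tilde f(p(1)) = x f(1)$, and the restriction through $B$ corresponds to an $R$-quadratic map vanishing on $0$ and $1$. Uniqueness is immediate from the directness of $A\oplus B$. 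Using the complement $B'$ instead of $B$ gives the variant with a homogeneous $R$-quadratic map: here the $A$-component must be the homogeneous quadratic map $x\mapsto x^2 f(1)$ (induced by $p(1)$, since $\varepsilon$ is replaced by the map $R\to R$ sending $p(r)\mapsto r^2$, which is homogeneous quadratic and retracts onto $A$), and the $B'$-component again vanishes on $0$ and $1$.

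The only point that needs genuine care, rather than formal bookkeeping, is verifying that $A$ is \emph{free} on $p(1)$, i.e.\ that $p(1)$ is not annihilated by any nonzero scalar. This does not follow from the abstract presentation alone but does follow from the existence of the retraction $\varepsilon$ (or its homogeneous analogue) onto $R$: if $r\,p(1)=0$ in $\PR{R}$ then applying $\varepsilon$ gives $r=0$ in $R$. So the main ``obstacle'' is really just making sure the splitting maps $\PR{R}\to R$ are available, which they are because $\mathrm{id}_R$ and $x\mapsto x$ viewed suitably are both $R$-quadratic; after that everything is a direct-sum-of-modules argument transported across the adjunction of Proposition \ref{prp:quadUniv}. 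I would also remark that the relations \eqref{eq:defcsalsym} show $B$ is, as predicted, a quotient of $R$ by the polynomial ideal $I_2$ twisted appropriately, but that identification is not needed for the present corollary and can be deferred.
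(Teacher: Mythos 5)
Your argument is correct, but it takes a different route from the paper's. The paper reads everything directly off the presentation in Proposition \ref{pr:PRRrel}: since the generator $p(1)$ occurs in none of the relations \eqref{eq:defaddPR}, \eqref{eq:defscalPR}, the presented module splits as a free rank-one summand on $p(1)$ plus the submodule generated by the $p_r$; and since the relations satisfied by the $p_r$ and by the $p_{[r]}$ are formally identical, those two submodules are isomorphic (via $p_r\mapsto p_{[r]}$); the third bullet is then disposed of by the explicit formulas $f(r)=r\,f(1)+(f(r)-r\,f(1))$ and $f(r)=r^2f(1)+(f(r)-r^2f(1))$. You instead construct explicit retractions: $\varepsilon$ (induced by $\mathrm{id}_R$) kills the $p_r$ and splits off $R\,p(1)$, while the map induced by the homogeneous quadratic $r\mapsto r^2$ kills the $p_{[r]}$ and splits off the same summand, so both submodules are complements of $A$ and hence isomorphic to $\PR{R}/A$; the decomposition of maps is then transported through Proposition \ref{prp:quadUniv}. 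Your approach buys a cleaner justification of the representability claim in the second bullet (which the paper leaves implicit) and produces the uniqueness in the third bullet structurally from $\mathrm{Hom}(A\oplus B,N)\cong\mathrm{Hom}(A,N)\oplus\mathrm{Hom}(B,N)$, at the cost of invoking the universal property twice; the paper's route is shorter and yields the isomorphism $\langle p_r\rangle\cong\langle p_{[r]}\rangle$ by an explicit matching of presentations. Two small remarks: freeness of $R\,p(1)$ \emph{does} follow from the presentation alone (a generator occurring in no relation always spans a free direct summand), so your extra care there, while harmless, is not needed; and your closing aside that \eqref{eq:defcsalsym} exhibits $B$ as a quotient of $R$ by $I_2$ is off the mark --- by Theorem \ref{th:uqd} the submodule $B$ is isomorphic to the ideal $I_2$ itself, not to $R/I_2$ --- but since you explicitly defer and never use that identification, it does not affect the proof.
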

\begin{proof}
 The generator $p(1)$ doesn't appear in the relations, and the relations satisfied by the elements  $p_r$ or $p_{[r]}$ are the same. 
Both decompositions are easy: $f(r)=r\,f(1)+(f(r)-r\,f(1))$ and 
$f(r)=r^2\,f(1)+(f(r)-r^2\,f(1))$.
\end{proof}

These facts lead to the following structural interpretation.

\begin{defn}\label{def:quadder} A {\it quadratic derivation}\/ on  $R$ with values in an $R$-module $M$ is a map $d: R \to M$ satisfying the relations for all $r,s\in R$
\begin{align}
 d(r+s)&=d(r)+d(s)+rs\,d(2) \label{eq:defaddqd}\\
d(rs)&=r\,d(s)+s^2\,d(r) \label{eq:defscalqd}
\end{align} 
\end{defn}

\begin{examps}\label{ex:uqd}
 \begin{enumerate}
  \item Let $\eta : R \to \PR{R}$ be the ``unit map" $\eta(r)=r\,p(1)$. Then by the foregoing, the maps $D_1,D_2: R \to \PR{R}/\eta(R)$ defined by $D_1(r) = \overline{p_r}$ and $D_2(r)=\overline{p_{[r]}}$ are both universal quadratic derivations. Moreover, the canonical map $p: R\to \PR{R} =\eta(R) \oplus \langle p_r\rangle_{r\in R}$ decomposes as $p(r)= \eta(r)+D_1(r)$. This is the precise analogue with the situation in groups mentioned at the beginning of the section. Also, a quadratic derivation is the same as an $R$-quadratic map vanishing on 0 and 1.
  
  \item Let $R$ be a 2-binomial ring, i.e. for all $r\in R$ the element $r(r-1)$ is uniquely 2-divisible so that $\binom{r}{2}=\frac{r(r-1)}{2}\in R$. Then the map $h: R\to R$, $h(r)=\binom{r}{2}$, is a quadratic derivation.
  
  \item Quadratic derivations also occur naturally in the theory of square rings, cf.\ \cite{BHP}. Let $(R\mr{H}M\mr{P}R)$ be a square ring with $P=0$ \cite[8.6]{BHP}. Then $M$ is an $R$-bimodule, and $H$ satisfies relation
\ref{eq:defaddqd} and $H(rs) =r^2 H(s) + H(s)r$ for all $r,s\in R$. So if $R$ is commutative and the right and left $R$-actions on $M$ coincide then $H$ is a quadratic derivation.
This situation actually generalizes example (2) as for a 2-binomial ring $R$ we have the square ring $R_{nil}=(R \mr{h}R \mr{0}R)$ which has an important interpretation: its modules are the nilpotent $R$-groups of class 2, see \cite[8.5]{BHP}, \cite{Wf}, and also remark \ref{rm:R-grps} below.
 \end{enumerate}
\end{examps}

The surprising result now is that quadratic derivations, unlike linear, i.e.\ classical ones, are represented by an ideal of $R$ itself: let
$I_2$ denote the ideal of $R$ generated by the elements $r^2-r$, $r\in R$.

\begin{thm}\label{th:uqd} The map $D: R \to I_2$, $D(r)=r^2-r$, is a universal quadratic derivation.
\end{thm}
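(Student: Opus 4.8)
The plan is to verify directly that $D(r)=r^2-r$ is a quadratic derivation and then establish its universality by producing the comparison map to an arbitrary target. First I would check the two defining relations from Definition \ref{def:quadder}. For \eqref{eq:defaddqd} note that $D(r+s)-D(r)-D(s) = (r+s)^2-(r+s) - (r^2-r) - (s^2-s) = 2rs = rs(2^2-2) = rs\,D(2)$, as required. For \eqref{eq:defscalqd} compute $D(rs) = r^2s^2 - rs$ while $r\,D(s) + s^2 D(r) = r(s^2-s) + s^2(r^2-r) = rs^2 - rs + r^2s^2 - rs^2 = r^2 s^2 - rs$, so the two sides agree. Hence $D$ is a quadratic derivation on $R$ with values in the $R$-module $I_2$ (note $r^2-r \in I_2$ by definition, and $I_2$ is an honest $R$-module since it is an ideal).

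Next, for universality, let $d: R \to M$ be any quadratic derivation into an $R$-module $M$; I must produce a unique $R$-linear map $\varphi: I_2 \to M$ with $\varphi \circ D = d$, i.e.\ $\varphi(r^2-r) = d(r)$ for all $r$. Uniqueness is immediate once existence is known, because $I_2$ is generated as an $R$-module by the elements $r^2-r$ (it is generated as an ideal by them, but being closed under multiplication by $R$ it is already generated by them as a module). For existence I would use the universal $R$-quadratic map: by Proposition \ref{prp:quadUniv} and Corollary \ref{pr:decPRR}, a quadratic derivation $R \to M$ is the same as an $R$-linear map $\PR{R}/\eta(R) \to M$, where $\PR{R}/\eta(R)$ is generated by the classes $\overline{p_r}$ subject to the relations \eqref{eq:defaddPR} and \eqref{eq:defscalPR} (with $p(1)$ killed, so $p_2 = \overline{p_2}$ etc.). So it suffices to show that the assignment $\overline{p_r} \mapsto r^2-r$ defines an $R$-module isomorphism $\PR{R}/\eta(R) \xrightarrow{\ \sim\ } I_2$; composing its inverse with $d$-as-a-map-on-$\PR{R}/\eta(R)$ then gives $\varphi$.

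So the real content is the isomorphism $\PR{R}/\eta(R) \cong I_2$ via $\overline{p_r} \mapsto r^2 - r$. That this is a well-defined $R$-linear map amounts to checking that $r \mapsto r^2-r$ satisfies relations \eqref{eq:defaddPR} and \eqref{eq:defscalPR} in $I_2$ — but that is exactly the computation in the first paragraph (the relations defining a quadratic derivation are precisely \eqref{eq:defaddqd}, \eqref{eq:defscalqd}, which match \eqref{eq:defaddPR}, \eqref{eq:defscalPR} after setting $p(1)=0$). Surjectivity is clear since the $r^2-r$ generate $I_2$. The main obstacle is injectivity: I must show that every $R$-linear relation among the $r^2-r$ in $R$ is already a consequence of \eqref{eq:defaddPR}--\eqref{eq:defscalPR}. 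The natural strategy is to exhibit an inverse map $\psi: I_2 \to \PR{R}/\eta(R)$. A general element of $I_2$ is $\sum_k t_k(r_k^2-r_k)$ with $t_k, r_k \in R$; one wants to set $\psi\big(\sum_k t_k(r_k^2-r_k)\big) = \sum_k t_k \overline{p_{r_k}}$ and check this is well-defined, i.e.\ independent of the representation. For this I would use relation \eqref{eq:defscalPR} in the form $p_{ts} = t\,p_s + s^2 p_t$, together with \eqref{eq:defcsalsym}, to rewrite $t\,\overline{p_{r^2-r}}$-type expressions and reduce any element of the kernel to zero; concretely, relation \eqref{eq:defscalPR} lets one express $\overline{p_{rs}}$ in terms of $\overline{p_r}, \overline{p_s}$ in a way mirroring the identity $(rs)^2-rs = r(s^2-s) + s^2(r^2-r)$ in $I_2$, and one checks the module of relations on the $\overline{p_r}$ side maps onto all additive/multiplicative relations among the $r^2-r$. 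I expect this bookkeeping — showing no relations are lost — to be the delicate part, likely handled by defining $\psi$ on the free module on symbols $[t,r]$, $(t,r)\in R\times R$, mapping $[t,r]\mapsto t\,\overline{p_r}$, and verifying it factors through $I_2$ by checking it kills the relations $[t,r]+[t',r]-[t+t',r]$, $[t r', r]-[t, r r']-\dots$ that present $I_2$ as a quotient of that free module; these verifications all come down to \eqref{eq:defaddPR}, \eqref{eq:defscalPR} and \eqref{eq:defcsalsym}.
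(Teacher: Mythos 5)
Your first paragraph and your reduction of the problem are fine and match the paper's setup: $D$ is indeed a quadratic derivation, and by universality of $D_1$ it suffices to show that the induced $R$-linear map $\langle p_r\rangle_{r\in R}\cong \PR{R}/\eta(R)\to I_2$, $\overline{p_r}\mapsto r^2-r$, is an isomorphism; surjectivity is clear. But your treatment of injectivity has a genuine gap, and it sits exactly at the heart of the theorem. You propose to build an inverse $\psi$ by sending $\sum_k t_k(r_k^2-r_k)$ to $\sum_k t_k\overline{p_{r_k}}$ and to check well-definedness by verifying that $\psi$ kills ``the relations $[t,r]+[t',r]-[t+t',r]$, $[tr',r]-[t,rr']-\dots$ that present $I_2$ as a quotient of that free module.'' The problem is that no such presentation of $I_2$ is available at this point: $I_2$ is a submodule of $R$, so the relations among the generators $r^2-r$ are \emph{all} $R$-linear dependencies $\sum_k t_k(r_k^2-r_k)=0$ holding in $R$, and the claim that these are generated by the formal relations coming from \eqref{eq:defaddPR} and \eqref{eq:defscalPR} is precisely Corollary \ref{cor:presI2}, which the paper \emph{deduces from} Theorem \ref{th:uqd}. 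So checking that $\psi$ annihilates that list of formal relations proves nothing unless you independently show the list generates the whole kernel --- which is equivalent to the injectivity you are trying to establish. As written, the argument is circular (or, read charitably, the ``delicate bookkeeping'' you defer is the entire content of the theorem).

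What is actually needed is a direct proof that $\sum_k t_k(r_k^2-r_k)=0$ in $R$ forces $\sum_k t_k\,p_{r_k}=0$ in $\langle p_r\rangle$, and this is where the paper does real work: setting $x=\sum_i\lambda_ip_{r_i}$ with $y=\sum_i\lambda_i(r_i^2-r_i)=0$, one expands $0=p_y$ using \eqref{eq:defaddPR}, \eqref{eq:defscalPR} and, crucially, the symmetrized relation \eqref{eq:defcsalsym} $(r^2-r)p_s=(s^2-s)p_r$, which lets one trade ring coefficients of the form $\lambda_j(r_j^2-r_j)$ acting on one generator for multiples of another generator; after reassembling, $p_y=-x+\sum_i\lambda_i\bigl(\sum_j\lambda_j(r_j^2-r_j)\bigr)p_{r_i}=-x$, whence $x=0$. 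Your proposal correctly identifies the relevant relations, including \eqref{eq:defcsalsym}, but does not carry out (or even outline) this computation, and without it the injectivity step is unproved.
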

\begin{proof} As $D$ is a quadratic derivation it induces an $R$-linear map $\hat{D}: \langle p_r\rangle_{r\in R} \to I_2$ such that $\hat{D}(p_r)=r^2-r$, by universality of $D_1$. As $\hat{D}$ is clearly surjective we must prove its injectivity. 
Let $x=\sum_i\lambda_ip_{r_i}$ such that $\hat{D}(x)=0$, with $\lambda_i,r_i\in R$. We then have $y=\sum_i\lambda_i(r_i^2-r_i)=0$ and $p_y=0$. And by \eqref{eq:defaddPR}
\begin{equation*}
 0=p_y=\sum_ip_{\lambda_i(r_i^2-r_i)}+
\sum_{i<j}\lambda_i(r_i^2-r_i)\lambda_j(r_j^2-r_j)p_2\,.
\end{equation*}
Using \eqref{eq:defcsalsym} twice we get
\begin{equation*}
 \sum_{i<j}\lambda_i(r_i^2-r_i)\lambda_j(r_j^2-r_j)p_2=
2\sum_{i<j}\lambda_i\lambda_j(r_j^2-r_j)p_{r_i}=
\sum_{i\neq j}\lambda_i\lambda_j(r_j^2-r_j)p_{r_i} \,.
\end{equation*}
On the other hand, using \eqref{eq:defscalPR} and \eqref{eq:defcsalsym} we get
\begin{equation*}
 \sum_ip_{\lambda_i(r_i^2-r_i)}= \sum_i\lambda_i^2p_{(r_i^2-r_i)}+ \sum_i(r_i^2-r_i)p_{\lambda_i}= \sum_i\lambda_i^2p_{(r_i^2-r_i)}+ \sum_i(\lambda_i^2-\lambda_i)p_{r_i},
\end{equation*}
and using \eqref{eq:defaddPR}, \eqref{eq:defscalPR} et \eqref{eq:defcsalsym}
\begin{equation*}
 p_{(r_i^2-r_i)}=p_{r_i^2}-p_{r_i}-r_i(r_i^2-r_i)p_2=
(r_i^2+r_i)p_{r_i}-p_{r_i}-2r_ip_{r_i}=(r_i^2-r_i-1)p_{r_i}.
\end{equation*}
Then we get
\begin{equation*}
 \sum_ip_{\lambda_i(r_i^2-r_i)}=
\sum_i\lambda_i^2(r_i^2-r_i-1)p_{r_i}+
\sum_i(\lambda_i^2-\lambda_i)p_{r_i}=
-\sum_i\lambda_ip_{r_i}+\sum_i\lambda_i^2(r_i^2-r_i)p_{r_i}
\end{equation*}
and finally
\begin{align*}
 0=p_y&=-x+\sum_i\lambda_i^2(r_i^2-r_i)p_{r_i}+
\sum_{i\neq j}\lambda_i\lambda_j(r_j^2-r_j)p_{r_i}\\
&=
-x+\sum_{i,j}\lambda_i\lambda_j(r_j^2-r_j)p_{r_i}=
-x+\sum_{i}\lambda_i\Bigl(\sum_j\lambda_j(r_j^2-r_j)\Bigr)p_{r_i}
=-x.
\end{align*}
Thus $\hat{D}$ is injective.
\end{proof}

As an interesting ring-theoretic consequence we find that the ideal $I_2$ admits the following functorial presentation as an $R$-module:

\begin{corol}\label{cor:presI2}
 The ideal $I_2$ of $R$ is generated by the elements  $\varpi_r=r^2-r$, $r\in R$, subject only to the formal relations
\begin{align*}
 \varpi_{r+s}&=\varpi_r+\varpi_s+rs\,\varpi_2&
\varpi_{rs}&=r\,\varpi_s+s^2\,\varpi_r
.
\end{align*}
for $r$ and $s$ in $R$.
\end{corol}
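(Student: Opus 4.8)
The plan is to recognise the $R$-module presented by the generators $\varpi_r$, $r\in R$, together with the two displayed families of relations, as a universal object, and then to appeal to Theorem~\ref{th:uqd}. Write $Q$ for the $R$-module so presented. First I would note that the assignment $r\mapsto\varpi_r$ is itself a quadratic derivation $R\to Q$ in the sense of Definition~\ref{def:quadder}, since the relations \eqref{eq:defaddqd} and \eqref{eq:defscalqd} are literally the defining relations of $Q$; conversely, every quadratic derivation $d\colon R\to M$ respects these relations and hence factors uniquely through an $R$-linear map $Q\to M$, $\varpi_r\mapsto d(r)$. Thus $Q$, together with $r\mapsto\varpi_r$, is a universal quadratic derivation on $R$.

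Next I would identify $Q$ with the submodule $\langle p_r\rangle_{r\in R}$ of $\PR{R}$. By Proposition~\ref{pr:PRRrel} together with the opening lines of the proof of Corollary~\ref{pr:decPRR}, this submodule (equivalently $\PR{R}/\eta(R)$) is presented by the generators $p_r$ subject precisely to the relations \eqref{eq:defaddPR} and \eqref{eq:defscalPR}, which coincide formally with those defining $Q$; hence $\varpi_r\mapsto p_r$ is an isomorphism $Q\cong\langle p_r\rangle_{r\in R}$, compatible with the universal quadratic derivation $D_1$ of Example~\ref{ex:uqd}(1). Finally, Theorem~\ref{th:uqd} asserts that $D\colon R\to I_2$, $D(r)=r^2-r$, is likewise a universal quadratic derivation; since universal objects are unique up to a canonical isomorphism, the resulting isomorphism $Q\cong I_2$ carries $\varpi_r$ to $r^2-r$, which is exactly the claim. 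Equivalently, and more concretely: the universal property of $Q$ furnishes an $R$-linear map $\phi\colon Q\to I_2$ with $\phi(\varpi_r)=r^2-r$; it is surjective because the elements $r^2-r$ generate $I_2$, and its injectivity is precisely the injectivity of $\hat{D}$ established in the proof of Theorem~\ref{th:uqd}, transported along $Q\cong\langle p_r\rangle_{r\in R}$.

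I do not expect a genuine obstacle here, as all the substance is already contained in Theorem~\ref{th:uqd}. The single point requiring care is purely bookkeeping: one must check that the presentation of $\langle p_r\rangle_{r\in R}$ involves \emph{only} the two families \eqref{eq:defaddPR} and \eqref{eq:defscalPR} and no further hidden consequences of \eqref{eq:defadd1P}--\eqref{eq:defscal1P}, but this is exactly what the ``conversely'' part of the proof of Proposition~\ref{pr:PRRrel} guarantees.
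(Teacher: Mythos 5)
Your argument is correct and is essentially the paper's own (implicit) proof: the corollary is read off by combining Theorem~\ref{th:uqd} with the presentation of $\langle p_r\rangle_{r\in R}$ coming from Proposition~\ref{pr:PRRrel} and Corollary~\ref{pr:decPRR}, exactly as you do. Your closing bookkeeping remark about the relations \eqref{eq:defaddPR}--\eqref{eq:defscalPR} being the full set of relations is the right point to flag, and it is indeed covered by the ``conversely'' part of Proposition~\ref{pr:PRRrel}.
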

In section \ref{sec:presentI2} we will simplify this presentation in case $R$ itself is given by a presentation. It would be interesting to know which other polynomial ideals admit analogous functorial presentations.

Combining corollary \ref{pr:decPRR} with theorem \ref{th:uqd}  furnishes the following computation of $\PR{R}$:

\begin{corol}\label{th:I2} There is a natural $R$-linear isomorphism $\PR{R} \to R\oplus I_2$ sending $p(r)$ to $(r,r^2-r)$.
\end{corol}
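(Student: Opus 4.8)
The plan is to deduce Corollary \ref{th:I2} by assembling the pieces already in place. By Corollary \ref{pr:decPRR}, the submodule $\eta(R) = \langle p(1)\rangle$ is free on the single generator $p(1)$ and is a direct summand of $\PR{R}$, with complement the submodule $\langle p_r\rangle_{r\in R}$ generated by the elements $p_r = p(r) - r\,p(1)$. Thus we have an internal direct sum decomposition $\PR{R} = \eta(R) \oplus \langle p_r\rangle_{r\in R}$, and the projections are determined by the augmentation $\varepsilon$ of \eqref{epsilondef} together with the rule $p(r) \mapsto p_r = p(r) - \varepsilon(p(r))\,p(1)$. The first summand is identified with $R$ via $\eta$, i.e.\ $r\,p(1) \leftrightarrow r$.

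Next I would invoke Theorem \ref{th:uqd}: the map $D: R \to I_2$, $D(r) = r^2 - r$, is a universal quadratic derivation, and since $D_1: R \to \PR{R}/\eta(R)$, $r \mapsto \overline{p_r}$, is also universal (Example \ref{ex:uqd}(1)), the induced comparison map $\hat D: \langle p_r\rangle_{r\in R} \to I_2$ with $\hat D(p_r) = r^2 - r$ is an $R$-linear isomorphism. Composing, we get an $R$-linear isomorphism
\begin{equation*}
\PR{R} \;=\; \eta(R) \oplus \langle p_r\rangle_{r\in R} \;\xrightarrow{\ \cong\ }\; R \oplus I_2
\end{equation*}
which sends $r\,p(1)$ to $(r,0)$ and $p_r$ to $(0, r^2-r)$. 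Applying this to $p(r) = r\,p(1) + p_r$ (which is precisely the decomposition of Corollary \ref{pr:decPRR} with the $R$-linear part $r \mapsto r\,p(1)$) yields $p(r) \mapsto (r, r^2 - r)$, as claimed.

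Finally I would check naturality in $R$: given a ring homomorphism $\varphi: R \to R'$, the functor $P^2$ (in the evident sense of applying $\varphi$ to coefficients, or restricting scalars) intertwines $p_R$ with $p_{R'}$, and both the splitting $\eta(R)\oplus\langle p_r\rangle$ and the universal quadratic derivation $D$ are natural by their universal characterizations; hence so is the composite isomorphism, and the formula $p(r)\mapsto(r,r^2-r)$ is manifestly compatible with $\varphi$. There is no real obstacle here — every ingredient has been established — so this is essentially a bookkeeping argument; the only point requiring a moment's care is to confirm that the splitting of Corollary \ref{pr:decPRR} is the one whose linear part is $r\mapsto r\,p(1)$, so that the component of $p(r)$ in $\langle p_r\rangle_{r\in R}$ is exactly $p_r$ and thus maps to $r^2-r$ under $\hat D$.
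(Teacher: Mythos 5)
Your proposal is correct and follows exactly the paper's route: the corollary is obtained by combining the splitting $\PR{R}=\eta(R)\oplus\langle p_r\rangle_{r\in R}$ of Corollary \ref{pr:decPRR} with the isomorphism $\hat D\colon\langle p_r\rangle_{r\in R}\to I_2$, $p_r\mapsto r^2-r$, furnished by Theorem \ref{th:uqd}, so that $p(r)=r\,p(1)+p_r\mapsto(r,r^2-r)$. You have merely spelled out the bookkeeping (and the naturality check) that the paper leaves implicit.
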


In the sequel we identify $\PR{R}$ and $R\oplus I_2$; the map $p$ then reads  $p(r)=(r,r^2-r)$.
\begin{examps}
 \begin{enumerate}
  \item If $I_2=R$, in particular if there is $r\in R$ such that $r^2-r$ is invertible (for example if $R$ is a field different from $\F_2$) then $\PR{R}=R\oplus R$, with $p(r)=(r,r^2-r)$.

 \item If $R$ is a 2-binomial ring (for example if $R=\Z$)  then $I_2\simeq R$, and we again have  $\PR{R}=R\oplus R$ but $p(r)=(r,\binom r2)$.
 
\item If $I_2=0$ for example if $R=\F_2$ or $R=\F_2^n$, we then have $\PR{R}=R$ with $p(r)=r$.
 \end{enumerate}
\end{examps}

\begin{rem}\label{rm:R-grps} Based on example \ref{ex:uqd}(3) and theorem \ref{th:uqd} we can now define a notion of nilpotent $R$-group of class 2 for any (even non-2-binomial!) ring $R$, as being a module over the  square ring $R_{Nil} = (R \mr{D} I_2 \mr{0} R)$. This generalizes the classical notion of  a nilpotent $R$-group of class 2
in the 2-binomial case since then the map $\times 2: R \to I_2$, $\times2(r) = 2r$, is an $R$-linear isomorphism, whence $R_{Nil}$ is isomorphic with the square ring $R_{nil}$ in example \ref{ex:uqd}(3). For example,  taking $R=\Z/q^2\Z$, $q$ prime, $R$ is  2-binomial unless $q=2$; in this case a module  over 
$R_{Nil}$ is the same as a group $G$ whose third term $G^4 \gamma_2(G)^2\gamma_3(G)$ of the lower 2-central series of Lazard is trivial \cite[8.1]{BHP}. These groups play a  role in the unstable Adams spectral sequence, and constitute algebraic models for unstable Moore-spaces whose homology is of exponent 2 \cite[8.2]{BHP}. Note that in general, an $R$-group has not only unary operations parametrized by the elements of $R$ but also binary operations paramatrized by the elements of $I_2$; in the special case where $I_2=2R$ (in particular if $R$ is 2-binomial) the latter are all multiples of the commutator by ring elements and thus determined by the group structure and the unary operations.

This observation allows to enrich quadratic algebra so as to admit  coefficients in a fixed commutative ring $R$; in particular this leads to a notion of square algebras over $R$ in the category of which $R_{Nil}$ is the initial object. Thus one obtains
 a unified  framework for dealing with nilpotent $R$-groups of class 2 on the one hand and with algebras over a nilpotent operad of class 2 over $R$ on the other hand, among others; this is work in progress.
\end{rem}

\section{The module $\PR{M}$}
\label{sec:PRM}
In this section we describe $\PR{M}$ as an extension with cokernel $M$ whose kernel is an intricate amalgamation of the simpler modules $\SY{M}$ and $\GA{M}$ invoking also the ideal $I_2$. This amalgamation will be further analyzed in the subsequent sections.\medskip

Let $M$ be an $R$-module. The kernel of the map $\varepsilon $ defined in (\ref{epsilondef}) contains  $d_p(x,y)=p(x+y)-p(x)-p(y)$, and since $d_p$ is $R$-bilinear, we get a $R$-linear map
\begin{align*}
\varphi_1:&\:\SY{M}\to\PR{M} & \varphi_1(xy)&:=p(x+y)-p(x)-p(y).
\end{align*} 
The kernel of $\varepsilon $ also contains $p_r(x)=p(rx)-rp(x)$. By  remark \ref{rem:vardef}, the map $(r,x)\mapsto p_r(x)$ is a homogeneous \Rq map in $x$ and is an \Rq map in $r$ vanishing on 0 and 1. We thus obtain an $R$-linear map
\begin{align*}
 \varphi_2:&\:I_2\tp\GA{M}\to \PR{M} & 
\varphi_2((r^2-r)\tp \ga{x} &=p(rx)-rp(x).
\end{align*}
The maps $\varphi_1$ and $\varphi_2$, together with $\varepsilon$, are the main structure homomorphisms of $\PR{M}$ as they encode the cross effect and the cross actions of the map $p$.
Clearly $\Ker \varepsilon $ is generated by the images of $\varphi_1$ and $\varphi_2$. Thus we get the exact sequence
\begin{align*}\xymatrixcolsep{3pc}
\xymatrix{\SY{M}\oplus (I_2\tp\GA{M})
\ar[r]^{\hskip17mm(\varphi_1,\varphi_2)} 
&\PR{M}\ar[r]^{\quad \varepsilon }&M\ar[r]&0
}
\end{align*}

We now give a complete description of the kernel of $\varepsilon $.

\subsubsection*{Notations}
Consider the following $R$-linear maps
\begin{align*}
 v: \SY{M}&\to I_2\tp \SY{M}& v(xy)&=2\tp xy\\
w: \SY{M}&\to \GA{M}& w(xy)&=d_{\gamma_2}(x,y)\ (\text{cf.\eqref{eq:defw}})\\
j_{11}:I_2\tp \SY{M}&\to \SY{M}& 
j_{11}((r^2-r)\tp xy)&=(r^2-r)xy\\
j_{12}:\GA{M}&\to \SY{M}& j_{12}(\ga{x})&=x^2\\
j_{21}:I_2\tp \SY{M}&\to I_2\tp \GA{M}& j_{21}&=Id\tp w \\
j_{22}:\GA{M}&\to I_2\tp \GA{M}& j_{22}(\ga{x})&=2\tp \ga{x}.
\end{align*}

\begin{lem}\label{lm:lemA}
 These maps satisfy the relations
\begin{align*}
 j_{11}v &=j_{12}w & j_{21}v &=j_{22}w &
\varphi_1j_{11}&=\varphi_2j_{21}&
\varphi_1j_{12}&=\varphi_2j_{22}.
\end{align*}
\end{lem}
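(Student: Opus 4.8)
The plan is to verify each of the four stated relations by evaluating both sides on $R$-module generators — symmetric products $xy$ for domains involving $\SY{M}$ and divided powers $\gamma_2(x)$ (together with the decomposable symbols $\gamma_1(x)\gamma_1(y)$) for domains involving $\GA{M}$ — and invoking the defining formulas of $v,w,j_{11},j_{12},j_{21},j_{22},\varphi_1,\varphi_2$ from the Notations paragraph, plus the explicit description of $\varphi_2$ via $\varphi_2((r^2-r)\tp\gamma_2(x))=p(rx)-rp(x)$ and of $\varphi_1$ via $\varphi_1(xy)=p(x+y)-p(x)-p(y)=d_p(x,y)$.

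For the first relation $j_{11}v=j_{12}w$: on a generator $xy$ we compute $j_{11}v(xy)=j_{11}(2\tp xy)=(2^2-2)xy=2xy$, while $j_{12}w(xy)=j_{12}(d_{\gamma_2}(x,y))=j_{12}(\gamma_2(x+y)-\gamma_2(x)-\gamma_2(y))=(x+y)^2-x^2-y^2=2xy$ in $\SY{M}$, so the two agree. For the second relation $j_{21}v=j_{22}w$: on $xy$ we get $j_{21}v(xy)=(Id\tp w)(2\tp xy)=2\tp w(xy)=2\tp\gamma_1(x)\gamma_1(y)$, and $j_{22}w(xy)=j_{22}(\gamma_1(x)\gamma_1(y))=2\tp\gamma_1(x)\gamma_1(y)$ (using that $j_{22}$, defined on the generators $\gamma_2(x)$, sends the decomposable $\gamma_1(x)\gamma_1(y)=d_{\gamma_2}(x,y)$ to $2\tp d_{\gamma_2}(x,y)=2\tp\gamma_1(x)\gamma_1(y)$ by additivity), so again they agree.

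For the third relation $\varphi_1 j_{11}=\varphi_2 j_{21}$: on a generator $(r^2-r)\tp xy$ of $I_2\tp\SY{M}$ we have $\varphi_1 j_{11}((r^2-r)\tp xy)=\varphi_1((r^2-r)xy)=(r^2-r)\,d_p(x,y)$ by $R$-linearity of $\varphi_1$, while $\varphi_2 j_{21}((r^2-r)\tp xy)=\varphi_2((r^2-r)\tp w(xy))=\varphi_2((r^2-r)\tp(\gamma_2(x+y)-\gamma_2(x)-\gamma_2(y)))=\big(p(r(x+y))-rp(x+y)\big)-\big(p(rx)-rp(x)\big)-\big(p(ry)-rp(y)\big)=p_r(x+y)-p_r(x)-p_r(y)=d_{p_r}(x,y)$; since $p_r$ is a homogeneous $R$-quadratic map in its argument (remark \ref{rem:vardef}), its cross-effect is $R$-bilinear, and one checks $d_{p_r}(x,y)=(r^2-r)d_p(x,y)$ directly from $d_{p_r}(x,y)=d_p(rx,ry)-r\,d_p(x,y)=r^2d_p(x,y)-r\,d_p(x,y)$ using bilinearity of $d_p$. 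For the fourth relation $\varphi_1 j_{12}=\varphi_2 j_{22}$: on $\gamma_2(x)$ we get $\varphi_1 j_{12}(\gamma_2(x))=\varphi_1(x^2)=d_p(x,x)=p(2x)-2p(x)=p_2(x)$, while $\varphi_2 j_{22}(\gamma_2(x))=\varphi_2(2\tp\gamma_2(x))=\varphi_2((2^2-2)\tp\gamma_2(x))=p(2x)-2p(x)=p_2(x)$, so they coincide; and since both sides are $R$-linear in $\gamma_2(x)$ and the elements $\gamma_2(x)$ together with the decomposables generate $\GA{M}$ (the check on decomposables $\gamma_1(x)\gamma_1(y)$ being analogous, via $j_{12}(\gamma_1(x)\gamma_1(y))=2xy$ and $j_{22}(\gamma_1(x)\gamma_1(y))=2\tp\gamma_1(x)\gamma_1(y)$), this suffices.

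The one point that needs slight care — and the closest thing to an obstacle — is checking well-definedness of the verifications on $\GA{M}$: the symbols $\gamma_2(x)$ and $\gamma_1(x)\gamma_1(y)$ are not independent, being tied by $\gamma_2(x+y)=\gamma_2(x)+\gamma_2(y)+\gamma_1(x)\gamma_1(y)$ and $\gamma_2(rx)=r^2\gamma_2(x)$, so one must confirm that the formulas for $j_{12}$, $j_{22}$ (and $\varphi_2$) respect these relations; but this is exactly the content already recorded when these maps were introduced (for $j_{12}$: $x^2$ in $\SY{M}$ does satisfy $(x+y)^2=x^2+y^2+2xy$ with $2xy=w^{-1}$-image compatible; for $\varphi_2$: the homogeneity of $p_r(-)$ in its argument was noted right before the definition of $\varphi_2$), so no new work is required. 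Everything else is a direct substitution, so I would present the four computations compactly and leave the routine algebra to the reader.
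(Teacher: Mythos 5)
Your proof is correct and follows essentially the same route as the paper: a direct verification on generators, with the only non-immediate case being $\varphi_1 j_{11}=\varphi_2 j_{21}$, which you handle exactly as the paper does (via $d_{p_r}(x,y)=d_p(rx,ry)-r\,d_p(x,y)=(r^2-r)d_p(x,y)$, i.e.\ relation \eqref{eq:defadd2}); the remaining three relations are the ones the paper dismisses as easy, and your computations of them are fine.
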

\begin{proof}
 For the third relation we get:
\begin{equation*}
 \varphi_1j_{11}((r^2-r)\tp xy)=\varphi_1((r^2-r)xy) =(r^2-r)\varphi_1(xy),
\end{equation*}
and using the relation \eqref{eq:defadd2}
\begin{align*}
 \varphi_2j_{21}((r^2-r)\tp xy) &=\varphi_2((r^2-r)\tp(\ga{x+y}-\ga{x}-\ga{y})\\
&=p(r(x+y))-p(rx)-p(ry)-r\bigl(p(x+y)-p(x)-p(y)\bigr)\\
&=d_p(rx,ry)-r\,d_p(x,y)=(r^2-r)d_p(x,y)=(r^2-r)\varphi_1(xy).
\end{align*}
The other relations are easy.
\end{proof}
Let $K'(M)$ be the pushout of the diagram
\begin{equation*}
 \xymatrix{I_2\tp \SY{M}&\SY{M}\ar[l]_{\qquad v \ }
\ar[r]^{\quad w }&\GA{M}},
\end{equation*}
with structure maps $\eta_1$ and $\eta_2$, and let $K(M)$ be the pushout of the diagram
\begin{equation}\label{po:pusho2}\xymatrixcolsep{3pc}
 \xymatrix{\SY{M}&(I_2\tp \SY{M})\oplus \GA{M}\ar[l]_{(j_{11},j_{12})\hspace{13mm}}
\ar[r]^{\hspace{12mm}(j_{21},j_{22})}&I_2\tp \GA{M}},
\end{equation}
with structure maps $\theta_1$ and $\theta_2$, see the diagram below.

\begin{corol}\label{cor:2pushouts}
 The following diagram, where $j_{12}=j_1\eta_2$ and $j_{21}=j_2\eta_1$, is commutative~:
\begin{equation}\label{dg:diagr1}
 \xymatrix{\SY{M}\ar[r]^{w }\ar[d]_{v }
&\GA{M}\ar[rd]^{j_{22}} \ar[d]_{\eta_2}\\
I_2\tp \SY{M}\ar[r]^{\qquad\eta_1}\ar[rd]_{j_{11}}
&K'(M)\ar[d]_{j_1}\ar[r]^{j_2\qquad}&I_2\tp \GA{M} \ar[ddr]^{\varphi_2}\ar[d]_{\theta_2}\\
&\SY{M}\ar[drr]_{\varphi_1}\ar[r]^{\qquad\theta_1} &K(M)\ar[dr]^\varphi\\
&&&\PR{M}}
\end{equation}  
and the two squares are pushouts.
\end{corol}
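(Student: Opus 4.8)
The plan is to build the maps $j_1$, $j_2$ and $\varphi$ from the universal properties of the pushouts $K'(M)$ and $K(M)$, then to check the commutativity of \eqref{dg:diagr1} by a short diagram chase, and finally to recognize the lower square as a pushout. By the first two relations of Lemma~\ref{lm:lemA} the pairs $(j_{11},j_{12})$ and $(j_{21},j_{22})$ satisfy the compatibility condition with respect to the span $I_2\tp\SY{M}\xleftarrow{v}\SY{M}\xrightarrow{w}\GA{M}$ defining $K'(M)$ (namely $j_{11}v=j_{12}w$ and $j_{21}v=j_{22}w$); so its universal property provides unique $R$-linear maps $j_1\colon K'(M)\to\SY{M}$ and $j_2\colon K'(M)\to I_2\tp\GA{M}$ with $j_1\eta_1=j_{11}$, $j_1\eta_2=j_{12}$, $j_2\eta_1=j_{21}$, $j_2\eta_2=j_{22}$; in particular $j_{12}=j_1\eta_2$ and $j_{21}=j_2\eta_1$, as claimed. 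Similarly, $K(M)$ is the pushout of $\SY{M}\xleftarrow{(j_{11},j_{12})}(I_2\tp\SY{M})\oplus\GA{M}\xrightarrow{(j_{21},j_{22})}I_2\tp\GA{M}$, and the last two relations of Lemma~\ref{lm:lemA} ($\varphi_1j_{11}=\varphi_2j_{21}$ and $\varphi_1j_{12}=\varphi_2j_{22}$) say exactly that $(\varphi_1,\varphi_2)$ is a cocone under this span; so its universal property yields a unique $\varphi\colon K(M)\to\PR{M}$ with $\varphi\theta_1=\varphi_1$ and $\varphi\theta_2=\varphi_2$. This is the map $\varphi$ of the diagram.

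Commutativity of \eqref{dg:diagr1} is then formal. The top square is the defining square $\eta_1v=\eta_2w$ of the pushout $K'(M)$; the triangles $j_1\eta_1=j_{11}$, $j_2\eta_2=j_{22}$, $\varphi\theta_1=\varphi_1$ and $\varphi\theta_2=\varphi_2$ hold by construction; and the middle square $\theta_1j_1=\theta_2j_2$ is checked by precomposing with $\eta_1$ and with $\eta_2$ and invoking in each case the pushout identity $\theta_1(j_{11},j_{12})=\theta_2(j_{21},j_{22})$ of $K(M)$, together with the fact that the structure maps $\eta_1,\eta_2$ of a pushout are jointly epimorphic (their images generate $K'(M)$). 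The same device gives $\varphi_1j_1=\varphi_2j_2$, after which the remaining commutativities follow by composing pieces already established.

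For the last point — that the lower square is a pushout — I would align the two pushout presentations of $K(M)$. In $R$-{\bf Mod} the pushout of a span $A\xleftarrow{f}C\xrightarrow{g}B$ is $\coker\bigl(C\xrightarrow{(f,-g)}A\oplus B\bigr)$, with the two canonical maps of $A$ and $B$ into the cokernel as structure maps; so $K(M)=\coker\bigl(\psi\colon (I_2\tp\SY{M})\oplus\GA{M}\to\SY{M}\oplus(I_2\tp\GA{M})\bigr)$ with $\psi=\bigl((j_{11},j_{12}),-(j_{21},j_{22})\bigr)$, and $\theta_1,\theta_2$ are the two canonical maps post-composed with the quotient. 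Now the map $\pi\colon (I_2\tp\SY{M})\oplus\GA{M}\to K'(M)$, $\pi(x,y)=\eta_1(x)+\eta_2(y)$, is surjective (again because $\eta_1,\eta_2$ are jointly epimorphic), and the defining relations of $j_1,j_2$ give $\psi=(j_1,-j_2)\circ\pi$. Hence $\IM\psi=\IM\bigl((j_1,-j_2)\colon K'(M)\to\SY{M}\oplus(I_2\tp\GA{M})\bigr)$, so that
\[
K(M)=\coker\bigl(K'(M)\xrightarrow{(j_1,-j_2)}\SY{M}\oplus(I_2\tp\GA{M})\bigr)
\]
with the same structure maps $\theta_1,\theta_2$; this is precisely the assertion that the lower square of \eqref{dg:diagr1} is a pushout (the upper one being a pushout by the very definition of $K'(M)$). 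The only thing to watch is keeping the two presentations of $K(M)$ aligned; once the surjectivity of $\pi$ is observed there is no genuine obstacle.
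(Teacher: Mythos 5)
Your argument is correct and is exactly the routine verification the paper leaves implicit (the corollary is stated without proof as an immediate consequence of the pushout definitions of $K'(M)$, $K(M)$ and Lemma \ref{lm:lemA}): you induce $j_1,j_2,\varphi$ from the universal properties and identify the lower square as a pushout via the cokernel description of pushouts in $R$-{\bf Mod}, using that $\eta_1,\eta_2$ are jointly surjective. No gaps; this matches the intended reasoning.
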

The structure of $\PR{M}$ is determined by the following:
\begin{thm}\label{th:main}
 For any $R$-module, the natural sequence of $R$-modules
\begin{equation*}
 \xymatrix{0\ar[r]&K(M)\ar[r]^\varphi&\PR{M}\ar[r]^{\quad\varepsilon} &M\ar[r]&0}
\end{equation*} 
is exact. More precisely, the set $K(M)\times M$ with the operations
\begin{align*}
 (k,x)+(k',y)&=(k+k'-\theta_1(xy),x+y)\\
r\cdot (k,x)&=(rk-\theta_2\bigl((r^2-r)\tp \ga{x}\bigr),rx)
\end{align*}
is an $R$-module and the map $p(x)\mapsto (0,x)$ defines an $R$-linear isomorphism between $\PR{M}$ and this module.
\end{thm}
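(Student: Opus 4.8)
The plan is to prove exactness of the sequence $0\to K(M)\mr{\varphi}\PR{M}\mr{\varepsilon}M\to 0$ by an explicit inverse construction, which simultaneously yields the second, more precise, assertion. First I would verify that the operations
$(k,x)+(k',y)=(k+k'-\theta_1(xy),x+y)$ and $r\cdot(k,x)=(rk-\theta_2((r^2-r)\tp\ga{x}),rx)$
really define an $R$-module structure on the set $K(M)\times M$. Commutativity and the unit $(0,0)$ are immediate; associativity of addition reduces to the identity $\theta_1((x+y)z)+\theta_1(xy)=\theta_1(x(y+z))+\theta_1(yz)$, which follows from $R$-bilinearity of the symmetric product $xy$ together with $R$-linearity of $\theta_1$. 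The module axioms involving scalars, e.g.\ $(r+s)\cdot(k,x)=r\cdot(k,x)+s\cdot(k,x)$ and $(rs)\cdot(k,x)=r\cdot(s\cdot(k,x))$, reduce exactly to relations \eqref{eq:defaddqd} and \eqref{eq:defscalqd} for the quadratic derivation $r\mapsto (r^2-r)\tp\ga{x}$ composed with the $R$-linear map $\theta_2$ — here is where the results of section \ref{sec:PRR}, in particular that $r\mapsto r^2-r$ is a quadratic derivation, are used. The key relation linking the two operations, needed for distributivity $r\cdot((k,x)+(k',y))=r\cdot(k,x)+r\cdot(k',y)$, is precisely $\varphi_1 j_{11}=\varphi_2 j_{21}$ and $\varphi_1 j_{12}=\varphi_2 j_{22}$ from lemma \ref{lm:lemA}, transported to $K(M)$ via the pushout maps $\theta_1,\theta_2$; concretely one needs $\theta_1((r^2-r)(xy))=\theta_1(x y)\cdot(\text{scalar bookkeeping})$ matched against $\theta_2(2\tp\ga{x})$ etc., which is exactly the content of the pushout relation in \eqref{po:pusho2}.

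Next I would check that $x\mapsto(0,x)$ from $M$ into this module is an $R$-quadratic map. Its cross-effect sends $(x,y)$ to $(0,x+y)-(0,x)-(0,y)=(-\theta_1(xy)+\theta_1(xy)+0,\ 0)$ — more carefully, computing $(0,x)+(0,y)=(-\theta_1(xy),x+y)$, so $(0,x+y)-[(0,x)+(0,y)] = (\theta_1(xy),0)$; thus $d(x,y)=(\theta_1(xy),0)$, which is visibly $R$-bilinear in $(x,y)$ since $\theta_1$ is $R$-linear and $xy$ is $R$-bilinear. The second cross-action at $r$ sends $x$ to $(0,rx)-r^2\cdot(0,x)$; unwinding, $r^2\cdot(0,x)$ involves iterating the scalar action, and one computes it equals $(-\theta_2((r^2-r)\tp\ga{x})\cdot(\text{terms}),r^2x)$, so that the second cross-action comes out $R$-linear in $x$ because $\ga{x}$ is a quadratic form in $x$ and $\theta_2$ is $R$-linear — again lemma \ref{lm:lemA} is the input guaranteeing the cross terms cancel correctly. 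By the universal property (proposition \ref{prp:quadUniv}) this $R$-quadratic map factors as an $R$-linear map $\psi:\PR{M}\to K(M)\times M$ with $\psi(p(x))=(0,x)$.

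Finally I would show $\psi$ is inverse to the map $(k,x)\mapsto \varphi(k)+$ (a chosen lift of $x$) — more precisely, combining $\psi$ with $\varepsilon$ recovers the second coordinate, and the first coordinate of $\psi$ restricted to $\Ker\varepsilon$ inverts $\varphi$. One direction, $\varepsilon\psi^{-1}=\mathrm{pr}_2$ and $\psi(p(x))=(0,x)$, is built in. For the other direction I must show $\psi\circ\varphi=\mathrm{id}_{K(M)}$: since $K(M)$ is generated (as the relevant pushout) by images of $\SY{M}$ and $I_2\tp\GA{M}$ under $\theta_1$ and $\theta_2\circ(\text{pushout map})$, it suffices to check $\psi(\varphi_1(xy))=(\theta_1(xy),0)$ and $\psi(\varphi_2((r^2-r)\tp\ga{x}))=(\theta_2((r^2-r)\tp\ga{x}),0)$, and these are exactly the cross-effect and cross-action computations already performed. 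Then $\varphi$ injective follows, and surjectivity of $\varphi$ onto $\Ker\varepsilon$ was already noted before the theorem (that $\Ker\varepsilon$ is generated by the images of $\varphi_1$ and $\varphi_2$, hence by $\IM\varphi$). The main obstacle I anticipate is the bookkeeping in verifying that the operations satisfy the $R$-module axioms and that $\psi$ is well-defined — specifically, organizing the many instances of lemma \ref{lm:lemA} and the two pushout squares so that every cross term produced by the quadratic nature of $p$ lands in $\IM\theta_1+\IM\theta_2$ with the correct sign; once the module structure and the $R$-quadraticity of $x\mapsto(0,x)$ are established cleanly, the isomorphism is essentially formal from proposition \ref{prp:quadUniv} and the pushout description of $K(M)$.
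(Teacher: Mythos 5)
Your proposal is correct and follows essentially the same route as the paper: define the $R$-module structure on $K(M)\times M$ via the pushout relations of diagram \eqref{dg:diagr1}, check that $x\mapsto(0,x)$ is $R$-quadratic, invoke the universal property of $\PR{M}$, and compare with the previously established surjection of $(\varphi_1,\varphi_2)$ onto $\Ker\varepsilon$ to conclude injectivity of $\varphi$ and the isomorphism. Your explicit check of $\psi\varphi$ on generators is just the commutativity of the left square in the paper's five-lemma diagram, so the two arguments coincide in substance.
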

\begin{proof}
 Denote by $P$ the set $K(M)\times M$ with the above defined operations. Straightforward calculations using the commutativity of diagram \eqref{dg:diagr1} show that $P$ is an $R$-module. Moreover the map $M\to P$, $x\mapsto (0,x)$ is \Rqq. We then get an $R$-linear map $\PR{M} \to P$, $p(x)\mapsto (0,x)$. Moreover, the following diagram is commutative with exact rows:
\begin{equation*}
 \xymatrix{&K(M)\ar[r]^{\varphi}\ar[d]_=&
\PR{M}\ar[r]^{\quad\varepsilon} \ar[d]&M\ar[r]\ar[d]^=&0\\
0\ar[r]&K(M)\ar[r]&P\ar[r]&M\ar[r]&0
}
\end{equation*}
Thus $\varphi$ is injective and by the five lemma $\PR{M}$ and $P$ are isomorphic.
\end{proof}
\begin{rem}
 As a consequence we get the exact sequence
\begin{multline*}
 \xymatrix{(I_2\tp \SY{M})\oplus\GA{M}
\ar[rr]^{\hskip25mm
\bigl(\begin{smallmatrix} j_{11}&j_{12}\\
-j_{21}&-j_{22}\end{smallmatrix}\bigr)}
&\quad & }\\
\xymatrix{&\SY{M}\oplus (I_2\tp\GA{M})
\ar[rr]^{\hskip18mm(\varphi_1,\,\varphi_2)}
&&\PR{M}\ar[r]^{\quad \varepsilon}&M\ar[r]&0}
\end{multline*} 
\end{rem}

\begin{examps}\label{ex:calculP2RM}
 \begin{enumerate} 
  \item Suppose $R$ be a 2-binomial ring. In the 
diagram \eqref{dg:diagr1} we get $I_2\tp\SY{M}=\SY{M}$, 
$v =Id$, $I_2\tp\GA{M}=\GA{M}$ and $j_{22}=Id$. We then get $K'(M)=\GA{M}$, $\eta_2=Id$, $\eta_1=w $, $j_1=j_{12}$, $j_2=Id$. Thus $K(M)=\SY{M}$, with $\theta_1=Id$ and $\theta_2=j_{12}$. Finally we get $\PR{M}=\SY{M}\times M$ with the operations
\begin{align*}
 (k,x)+(k',y)&=(k+k'-xy,x+y)\\
r\cdot (k,x)&=(rk-\binom{r}{2} x^2,rx)
\end{align*}
Moreover, if $M$ is an $R[1/2]$-module, the 2-cocycle $(x,y)\mapsto -xy$ is the coboundary of the map $x \mapsto x^2/2$, and we get the $R$-linear isomorphism $\PR{M}\simeq \SY{M}\oplus M$, 
$(k,x)\mapsto (k+x^2/2,x)$. Thus the map $M\to \SY{M}\oplus M$, 
$x\mapsto (x^2/2,x)$ is universal \Rqq.

 \item Suppose $I_2=R$. We then get $I_2\tp\SY{M}=\SY{M}$, 
$v =2Id$, $I_2\tp\GA{M}=\GA{M}$, $j_{22}=2Id$ and $j_{11}=Id$. Thus $\eta_1$ is injective with $j_1$ as retraction. Then $\SY{M}$ is a direct summand of $K'(M)$ and we get 
$K'(M)=\SY{M}\oplus \coker w $, with $\eta_1=(Id,0)$, $\eta_2=(j_{12},\rho)$. We then obtain $j_1=(Id,0)$ and $j_2=(w ,0)$. Hence the summand $\coker w $ does not interfer in the computation of 
$K(M)$, and we get $K(M)=\GA{M}$, $\theta_2=Id$ and $\theta_1=w $. It follows that $\PR{M}=\GA{M}\times M$ with the operations
\begin{align*}
 (k,x)+(k',y)&=(k+k'-w (xy),x+y)\\
r\cdot (k,x)&=(rk-(r^2-r)\ga{x},rx).
\end{align*}
But the 2-cocycle $(x,y)\mapsto -w (xy)$ is the coboundary of the map $x\mapsto \ga{x}$. Thus we get an $R$-linear isomorphism $\PR{M}\simeq \GA{M}\oplus M$, 
$(k,x)\mapsto (k+\ga{x},x)$, and the map $M\to \GA{M}\oplus M$, 
$x\mapsto (\ga{x},x)$ is universal \Rqq. This fact generalizes proposition \ref{pr:lin+hom}.

\item Suppose now $I_2=0$. We then get $K'(M)=\coker w =M$ (by theorem \ref{th:GMsequ}) and $j_1(x)=x^2$. Thus $K(M)=\coker j_1=\LA{M}$. We obtain $\PR{M}=\LA{M}\times M$ with the operations
\begin{align*}
 (k,x)+(k',y)&=(k+k'-x\wedge y,x+y)\\
r\cdot (k,x)&=(rk,rx).
\end{align*}
It is not difficult to see that finally $\PR{M}\simeq \GA{M}$. (This also is  an easy consequence of the exact sequence \eqref{se:segamma} below).
\end{enumerate}
\end{examps}

 \section{Kernels and cokernels of some maps related to $\PR{M}$}
\label{sec:kercoker}

This section is of purely technical nature; in order to further analyze the module $K(M)=\Ker \epsilon$ in section \ref{exsequsforP2}
we here compute the kernels and cokernels of most of the maps appearing in diagram \eqref{dg:diagr1}, at least in the case where $M$ is free.

\begin{prop}\label{pr:cokerdiag}
 In   diagram \eqref{dg:diagr1} we get the following cokernels :
\begin{subequations}
\begin{align}
\coker v \simeq \coker \eta_2&
           \simeq (I_2/2R)\tp \SY{M}\label{eq:coker1}\\
\coker w \simeq \coker \eta_1&\simeq (R/2R)^{[1]}\tp M 
             \label{eq:coker2}\\
\coker j_{11}&\simeq (R/I_2)\tp \SY{M}\label{eq:coker3}\\
\coker j_{12}&\simeq (R/2R)\tp\LA{M}\label{eq:coker4}\\ 
\coker j_{21}&
          \simeq I_2\tp (R/2R)^{[1]}\tp M 
           \simeq (I_2/2I_2)^{[1]}\tp M\label{eq:coker5}\\ 
\coker j_{22}&\simeq (I_2/2R)\tp\GA{M}\label{eq:coker6}\\ 
\coker j_1\simeq \coker \theta_2&
            \simeq (R/I_2)\tp \LA{M}\label{eq:coker7}\\ 
\coker j_2\simeq \coker \theta_1&
               \simeq (I_2/2R)^{[1]}\tp M.\label{eq:coker8}
\end{align}
\end{subequations}
\end{prop}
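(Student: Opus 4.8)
The plan is to compute each cokernel by exploiting the explicit form of the maps in the notations preceding Lemma~\ref{lm:lemA}, together with the structural facts already established: the description of $\coker w$ from Theorem~\ref{th:GMsequ} (equivalently sequence~\eqref{se:Gsequ}), the presentation of $I_2$ from Corollary~\ref{cor:presI2}, and the universal property/pushout descriptions of $K'(M)$ and $K(M)$ in Corollary~\ref{cor:2pushouts}. Throughout I would reduce to the case $M$ free, since that is the hypothesis, and freely use right-exactness of $\otimes$ and the fact that for free $M$ the modules $\SY{M}$, $\LA{M}$, $\GA{M}$ are free (and sequence~\eqref{se:Gsequ} is short exact by Lemma~\ref{lm:GAsequ}).

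First I would dispose of the ``direct'' maps. For \eqref{eq:coker1}: $v(xy)=2\tp xy$ is essentially the map $\SY{M}\to I_2\tp\SY{M}$ induced by $\times 2: R\to I_2$ tensored with $\SY{M}$; since $\times 2$ has cokernel $I_2/2R$ and $\SY{M}$ is free (hence flat), $\coker v\cong (I_2/2R)\tp\SY{M}$. For \eqref{eq:coker2}: $\coker w\cong (R/2R)^{[1]}\tp M$ is exactly the content of sequence~\eqref{se:Gsequ} together with $\IM d=0$ when $M$ is free and $2$-torsion-free in the relevant sense---more precisely one reads it off Theorem~\ref{th:GMsequ} with $\tau$ replaced by the short-exact conclusion of Lemma~\ref{lm:GAsequ}. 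For \eqref{eq:coker3}: $j_{11}$ is the multiplication map $I_2\tp\SY{M}\to\SY{M}$, whose cokernel is $(R/I_2)\tp\SY{M}$ by right-exactness applied to $I_2\to R\to R/I_2\to 0$ tensored with the free module $\SY{M}$. For \eqref{eq:coker4}: $j_{12}(\gamma_2(x))=x^2$ sends $\GA{M}$ onto the squares in $\SY{M}$; using the basis description from the proof of Lemma~\ref{lm:GAsequ}, the image is spanned by the $e_i^2$ together with $2e_ie_j$ ($i<j$), so the cokernel is $(R/2R)\tp\LA{M}$, $\LA{M}$ being free on the $e_i\wedge e_j$. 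For \eqref{eq:coker5}: $j_{21}=\mathrm{Id}\tp w$, so $\coker j_{21}=I_2\tp\coker w=I_2\tp(R/2R)^{[1]}\tp M$; the identification with $(I_2/2I_2)^{[1]}\tp M$ comes from $I_2\tp_R R/2R\cong I_2/2I_2$ and compatibility of the Frobenius twist with tensoring (here one must check the right $R$-action matches, which is routine since the twist only alters the right action by $r\mapsto r^2$). For \eqref{eq:coker6}: $j_{22}(\gamma_2(x))=2\tp\gamma_2(x)$ is again $\times 2$ on the $I_2$-factor tensored with the free module $\GA{M}$, giving $(I_2/2R)\tp\GA{M}$.

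Next come the pushout maps $j_1,j_2,\theta_1,\theta_2$. The strategy is: a pushout square identifies the cokernel of one leg with the cokernel of the opposite leg, so $\coker\eta_1\cong\coker w$ and $\coker\eta_2\cong\coker v$ (that gives the ``$\coker\eta_i$'' claims in \eqref{eq:coker1}--\eqref{eq:coker2} for free), and likewise $\coker\theta_1\cong\coker(j_{11},j_{12})$ restricted appropriately, $\coker\theta_2\cong\coker(j_{21},j_{22})$, while $\coker j_1$ and $\coker j_2$ are read from the lower pushout in \eqref{dg:diagr1}. Concretely, since $j_1\eta_2=j_{12}$ and $j_2\eta_1=j_{21}$ and $\eta_1,\eta_2$ are the (jointly epic) pushout structure maps, $\coker j_1$ is computed by quotienting $\SY{M}$ by $\IM\theta_1=\IM(j_{11},j_{12})$; combining \eqref{eq:coker3} and \eqref{eq:coker4} one gets $\coker\theta_1=\coker(j_{11},j_{12})\cong(I_2/2R)^{[1]}\tp M$ after a short computation (the image is $I_2\cdot\SY{M}+\langle x^2\rangle$, and the quotient of $\SY{M}$ by this is $\LA{M}$ made $I_2$-torsion and Frobenius-twisted---one checks the induced right action). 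Similarly $\coker j_2\cong\coker\theta_1$ and $\coker\theta_2=\coker(j_{21},j_{22})\cong\coker j_1\cong(R/I_2)\tp\LA{M}$: here the image inside $I_2\tp\GA{M}$ is $\IM(\mathrm{Id}\tp w)+2\tp\GA{M}$, and quotienting $I_2\tp\GA{M}$ by this leaves $(I_2/2R)\tp\coker w$... which must then be reconciled with $(R/I_2)\tp\LA{M}$ via the identity $K(M)$-pushout square---this reconciliation is where care is needed.

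The main obstacle I expect is precisely keeping straight the \emph{two different pushout presentations} and making sure that the ``opposite leg'' cokernel computations for $j_1,j_2,\theta_1,\theta_2$ are consistent---i.e. that $\coker j_1\cong\coker\theta_2$ and $\coker j_2\cong\coker\theta_1$ really hold, which requires noting that in a pushout square $A\leftarrow C\rightarrow B$ one has $\coker(C\to A)\cong\coker(B\to\mathrm{pushout})$, and then chaining this across the nested squares of \eqref{dg:diagr1}. A second, more technical, point is correctly tracking the Frobenius twists: several cokernels only acquire their stated form after recognizing that a quotient like $\SY{M}/(I_2\SY{M}+\langle x^2\rangle_x)$ carries the twisted right $R$-action (because $x^2$ is killed but $rx^2=(rx)\cdot x$ relations force $r^2$-scaling modulo the diagonal), so the natural isomorphism to $\LA{M}$ or $M$ is $R$-linear only after twisting. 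I would handle this by fixing a basis $(e_i)$ of $M$ once and for all, writing every module on generators, and reading the induced $R$-action off the defining relations; the verifications are then mechanical. Finally, a remark that the ``$\simeq$'' between the two expressions in \eqref{eq:coker5} and the equality $\coker\theta_1\simeq\coker j_2$ in \eqref{eq:coker8} should each be justified by an explicit natural transformation, not merely by counting.
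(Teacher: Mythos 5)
Your handling of \eqref{eq:coker7} and \eqref{eq:coker8} --- the only two items that are not immediate --- contains a genuine error. In the pushout \eqref{po:pusho2} the cokernel of a leg is isomorphic to the cokernel of the \emph{parallel} structure map, so $\coker(j_{11},j_{12})\simeq\coker\theta_2$ and $\coker(j_{21},j_{22})\simeq\coker\theta_1$; you have paired them the other way round. Compounding this, your ``short computation'' evaluating $\SY{M}/\bigl(I_2\SY{M}+\langle x^2\rangle_{x\in M}\bigr)$ as $(I_2/2R)^{[1]}\tp M$ is wrong: killing $I_2\SY{M}$ gives $(R/I_2)\tp\SY{M}$ and then killing the squares gives $(R/I_2)\tp\LA{M}$, which is exactly the right-hand side of \eqref{eq:coker7}, not of \eqref{eq:coker8}. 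Because of the swap you are then led to conclude that $\coker j_1$ should also equal $(I_2/2R)\tp\coker w\simeq(I_2/2R)^{[1]}\tp M$, and you leave the resulting contradiction as a ``reconciliation where care is needed''; no such reconciliation exists, since these modules are not isomorphic in general. The correct chains are $\coker j_1\simeq\coker\theta_2\simeq\coker(j_{11},j_{12})\simeq(R/I_2)\tp\LA{M}$ and $\coker j_2\simeq\coker\theta_1\simeq\coker(j_{21},j_{22})\simeq(I_2/2R)\tp\coker w\simeq(I_2/2R)^{[1]}\tp M$; equivalently, as the paper argues, $\coker j_1$ is the cokernel of the map $\coker\eta_1\to\coker j_{11}$ induced by $j_1$, i.e.\ of $(R/2R)^{[1]}\tp M\to(R/I_2)\tp\SY{M}$, $\bar1\tp x\mapsto\bar1\tp x^2$, and $\coker j_2$ is the cokernel of $\coker\eta_2\to\coker j_{22}$, i.e.\ of the map $(I_2/2R)\tp\SY{M}\to(I_2/2R)\tp\GA{M}$ induced by $w$.

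A second problem is your reduction ``to the case $M$ free, since that is the hypothesis'': freeness is assumed only in Proposition \ref{pr:kerdiag} on kernels, not here, so your basis arguments (e.g.\ for \eqref{eq:coker4}) prove a weaker statement and you offer no passage back to general $M$. In fact no freeness is needed anywhere: $v$ and $j_{22}$ are $(\times 2)\tp\mathrm{id}$ and $j_{11}$ is the multiplication map, so right-exactness of $\tp$ alone gives \eqref{eq:coker1}, \eqref{eq:coker3}, \eqref{eq:coker6}; $\IM j_{12}$ is generated by the elements $x^2$ and $2xy$, giving \eqref{eq:coker4} from $\SY{M}/\langle x^2\rangle\simeq\LA{M}$; and $\coker w\simeq(R/2R)^{[1]}\tp M$ was established for arbitrary $M$ in section \ref{sec:Gamma2} (your hedge about $\IM d$ is unnecessary, since $w$ and $\bar w$ have the same cokernel). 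The parts of your proposal that do match the paper are the pushout principle giving $\coker\eta_1\simeq\coker w$, $\coker\eta_2\simeq\coker v$, $\coker j_1\simeq\coker\theta_2$, $\coker j_2\simeq\coker\theta_1$, and the identification $I_2\tp(R/2R)^{[1]}\tp M\simeq(I_2/2I_2)^{[1]}\tp M$ in \eqref{eq:coker5}.
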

\begin{proof}
 Since in the diagram the squares are pushouts the cokernels of each pair of opposite maps are isomorphic. The isomorphisms \eqref{eq:coker1}, \eqref{eq:coker3}, \eqref{eq:coker4} and \eqref{eq:coker6} are easy. The isomorphisms \eqref{eq:coker2} and \eqref{eq:coker5} are consequences of \ref{lm:GAsequ}. Since $\coker j_1$ is isomorphic to the cokernel of the map $\coker \eta_1 \to \coker j_{11}$ induced by $j_1$   we get 
\begin{equation*}
 \coker j_1\simeq \coker\left(  
(R/2R)^{[1]}\tp M \to (R/I_2)\tp \SY{M}\right)
\simeq (R/I_2)\tp \LA{M}
\end{equation*}
and \eqref{eq:coker7} is proved. For \eqref{eq:coker8} we use the same argument
\begin{align*}
 \coker j_2\simeq&
\coker\left((I_2/2R)\tp\SY{M}\to (I_2/2R)\tp \GA{M}\right) \\
\simeq &(I_2/2R)\tp (R/2R)^{[1]}\tp M  
\simeq (I_2/2R)^{[1]} \tp M. \qedhere
\end{align*}\noqed
\end{proof}

\begin{prop}\label{pr:kerdiag}
 Suppose $M$ is a free $R$-module, $M=\oplus_iR$. Recall that ${}_2N$ is the 2-torsion submodule of $N$ for any $R$-module $N$. Then in  diagram \eqref{dg:diagr1} we have the following kernels :
\begin{subequations}
\begin{align}
\Ker v &=\oplus_{i'\leq i''}\ {}_2R={}_2\SY{M}&
\Ker w &=\oplus_i\ {}_2R \simeq {}_2R^{[1]}\tp M\label{eq:ker1}\\
\Ker j_{11}&=0&
\Ker j_{12}&=\oplus_{i'< i''}\ {}_2R \simeq {}_2\LA{M}\label{eq:ker2}\\
\Ker j_{21}&=\oplus_i\ {}_2I_2 \simeq {}_2I_2^{[1]}\tp M&
\Ker j_{22}&=\oplus_{i'\leq i''}\ {}_2R={}_2\GA{M}\label{eq:ker3}\\
\Ker \eta_1&=0&
\Ker \eta_2&=\oplus_{i'< i''}\ {}_2R \simeq {}_2\LA{M}\label{eq:ker4}
\end{align}\begin{align}
\Ker j_1&=\oplus_i\ I_2/2R \simeq (I_2/2R)^{[1]}\tp M&
\Ker j_2&=\oplus_i\ ({}_2R\oplus I_2/2R)\label{eq:ker5}\\
&&&\simeq ({}_2R\oplus I_2/2R)^{[1]}\tp M\notag \\
\Ker \theta_1&=\oplus_i\ {}_2R \simeq {}_2R^{[1]}\tp M&
\Ker \theta_2&=0.\label{eq:ker6}
\end{align}
\end{subequations}
\end{prop}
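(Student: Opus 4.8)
Since $M = \oplus_i R$ is free, each of the functors involved decomposes as a direct sum over the (ordered) index pairs. The plan is to use the explicit bases for $\SY{M}$ and $\GA{M}$ already recalled in the proof of Lemma \ref{lm:GAsequ}: the elements $e_{i'}e_{i''}$ with $i'\le i''$ form an $R$-basis of $\SY{M}$, while the elements $\gamma_2(e_i)$ together with the $w(e_{i'}e_{i''})$ with $i'<i''$ form an $R$-basis of $\GA{M}$. Correspondingly $I_2\tp\SY{M} = \oplus_{i'\le i''} I_2$ and $I_2\tp\GA{M} = \big(\oplus_i I_2\big)\oplus\big(\oplus_{i'<i''} I_2\big)$. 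Relative to these bases the six maps $v,w,j_{11},j_{12},j_{21},j_{22}$ are all ``diagonal'' in an evident block sense, so computing their kernels reduces to the rank-one situation, and the main work is to identify the kernel of a single scalar multiplication or inclusion map.

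\emph{First I would treat the six structure maps of the pushout squares.} The map $v$ acts on the basis vector $e_{i'}e_{i''}$ by $e_{i'}e_{i''}\mapsto 2\tp e_{i'}e_{i''}\in I_2$, i.e.\ it is $R\mr{\times 2} I_2$ in each coordinate; its kernel is ${}_2R$ in each coordinate, giving $\Ker v = \oplus_{i'\le i''}{}_2R = {}_2\SY{M}$. Similarly $j_{22}$ is $R\mr{\times 2} I_2$ on each of the basis vectors $\gamma_2(e_i)$ and $w(e_{i'}e_{i''})$, so $\Ker j_{22} = \oplus_{i'\le i''}{}_2R = {}_2\GA{M}$. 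The map $j_{11}$ is the inclusion $I_2\hookrightarrow R$ in each coordinate, hence injective, $\Ker j_{11}=0$; likewise $\eta_1$ is injective because it has the retraction $j_1$ onto $I_2\tp\SY{M}$ only when $I_2=R$, so here one argues instead from the pushout: $\Ker\eta_1 \cong \Ker(w \text{ restricted to }\Ker v)$—more directly, $\eta_1$ is injective since in the pushout $K'(M)$ the summand $I_2\tp\SY{M}$ injects because $v$ is ``divisible by $2$'' and $w$ is surjective onto the part of $\GA{M}$ complementary to the $\gamma_2(e_i)$'s; I will write this out via the explicit bases. For $j_{12}(\gamma_2(e_i)) = e_i^2 = e_ie_i$ and $j_{12}(w(e_{i'}e_{i''})) = j_{12}(\gamma_2(e_{i'}+e_{i''})-\gamma_2(e_{i'})-\gamma_2(e_{i''})) = 2e_{i'}e_{i''}$, so $j_{12}$ sends the $\gamma_2(e_i)$-coordinate isomorphically onto $R\,e_i^2$ and the $w(e_{i'}e_{i''})$-coordinate by $\times 2$ into $R\,e_{i'}e_{i''}$; hence $\Ker j_{12} = \oplus_{i'<i''}{}_2R \cong {}_2\LA{M}$, and dually $\Ker\eta_2 = \oplus_{i'<i''}{}_2R\cong{}_2\LA{M}$ via the same computation inside the pushout $K'(M)$. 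The map $j_{21}=\mathrm{Id}\tp w$ is, after tensoring the basis description of $w$ with $I_2$, the map sending the $\gamma_2(e_i)$-coordinate $I_2$ by $\times 2$ (since $w(e_i^2)=2\gamma_2(e_i)$, but written in the chosen basis of $\GA{M}$ this is $2\tp$ on the $\gamma_2(e_i)$ slot) and sending the off-diagonal slots isomorphically; so $\Ker j_{21} = \oplus_i {}_2I_2 \cong {}_2I_2^{[1]}\tp M$. Finally $\Ker w$ was already identified in Lemma \ref{lm:GAsequ}: $\Ker w = \oplus_i ({}_2R)e_i^2 \cong {}_2R^{[1]}\tp M$.

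\emph{Next I would handle $j_1,j_2,\theta_1,\theta_2$, which are induced on the pushouts.} Here I use the standard fact that for a pushout square the map out of one corner has kernel computable from the kernels/cokernels of the opposite edge maps, together with a small diagram chase on the explicit bases. Concretely, by the first pushout $K'(M) = \big(I_2\tp\SY{M} \oplus \GA{M}\big)/\{(v(s),-w(s))\}$; the map $j_1:K'(M)\to\SY{M}$ is induced by $(j_{11},j_{12})$, and $j_2:K'(M)\to I_2\tp\GA{M}$ by $(j_{21},j_{22})$—wait, the structure maps are $\eta_1,\eta_2$ and $j_1\eta_1 = j_{11}$, $j_1\eta_2 = j_{12}$, etc. On the $\gamma_2(e_i)$-slot the relevant rank-one picture is: $I_2 \xleftarrow{\times 2} R \xrightarrow{\times 2 \text{ (into the }\gamma_2\text{ slot)}} I_2$ for the first square glued to $R \xleftarrow{\,\hookleftarrow\,} I_2$ and $I_2 \xleftarrow{\times 2} R$, from which a direct computation gives $\Ker j_1 = \oplus_i I_2/2R \cong (I_2/2R)^{[1]}\tp M$, $\Ker\theta_1 = \oplus_i {}_2R \cong {}_2R^{[1]}\tp M$, $\Ker\theta_2 = 0$, and for $j_2$ the slot contributes both a ${}_2R$ (from the $\gamma_2(e_i)$ side) and an $I_2/2R$ (from the $w$ side), i.e.\ $\Ker j_2 = \oplus_i({}_2R\oplus I_2/2R)\cong({}_2R\oplus I_2/2R)^{[1]}\tp M$; the off-diagonal slots contribute nothing to any of these kernels by the injectivity statements above. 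The Frobenius twist appears in all the $\oplus_i$-indexed answers exactly because the relevant rank-one module sits in the $\gamma_2(e_i)$-coordinate, where the right $R$-action is by $r\cdot\gamma_2(e_i) = \gamma_2(re_i)$ involves $r^2$ on $\gamma_2$, matching the definition of $M^{[1]}$; I will point this out once and reuse it.

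\emph{Expected main obstacle.} The routine part is the rank-one bookkeeping; the genuinely delicate point is the bookkeeping for the \emph{induced} maps $j_1,j_2,\theta_1,\theta_2$ on the pushouts, where one must be careful that the two descriptions of the off-diagonal basis elements of $\GA{M}$ (as $w(e_{i'}e_{i''})$ versus as abstract generators $\gamma_1(e_{i'})\gamma_1(e_{i''})$) are consistently used, and that the relation $(v(s),-w(s))\sim 0$ defining $K'(M)$ is correctly propagated through $j_1,j_2$. In particular the nonobvious summand $I_2/2R$ in $\Ker j_1$ comes from elements of the form $[\,(r,0)\,]$ with $r\in I_2$, $r\notin 2R$, that become zero under $j_1$ because $j_{11}(r) = r \in \SY{M}$ can be cancelled using a $\gamma_2(e_i)$-term only when $r\in 2R$; making this precise—and checking it is exactly $I_2/2R$ and not something larger—is the step that needs real care. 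Once that single rank-one computation is pinned down, the rest assembles mechanically over the index set, and the identification with the Frobenius-twisted tensor expressions is immediate from the definition of $M^{[1]}$.
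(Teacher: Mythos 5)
Your plan is essentially the paper's own proof: the authors likewise use the standard bases of $\SY{M}$ and $\GA{M}$ to see that every map in diagram \eqref{dg:diagr1} acts diagonally, reduce to the two rank-one situations (a ``square'' slot, which is exactly the case $M=R$, and a ``rectangular'' slot), and read off the kernels of $\eta_1,\eta_2,j_1,j_2,\theta_1,\theta_2$ from the explicit pushout presentations; all the kernels you state agree with the proposition. Two local slips should be repaired when you write it out. First, your formula ``$\Ker\eta_1\cong\Ker(w$ restricted to $\Ker v)$'' is not correct: for a pushout $(B\oplus C)/\{(v(a),-w(a))\}$ one has $\Ker\eta_1=v(\Ker w)$ and $\Ker\eta_2=w(\Ker v)$, and here $v(\Ker w)=0$ because $v$ is multiplication by $2$ on each slot while $\Ker w$ is $2$-torsion (or, more simply, $\eta_1$ is injective because $j_1\eta_1=j_{11}$ is); the parenthetical about a retraction onto $I_2\tp\SY{M}$ is not needed. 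Second, the elements generating the summand $I_2/2R$ of $\Ker j_1$ on a square slot are not the classes $[(r,0)]$ (these satisfy $j_1[(r,0)]=j_{11}(r)=r\neq 0$) but the classes $\eta_1(r\tp e_i^2)-\eta_2(r\,\gamma_2(e_i))$ with $r\in I_2$, i.e.\ $[(r,-r)]$; such a class vanishes in $K'(M)$ exactly when $(r,-r)=(2s,-2s)$ for some $s$, i.e.\ $r\in 2R$, which is precisely why the answer is $I_2/2R$ and also why $j_2$ kills these classes (as $j_{21}(r)=2r=j_{22}(r)$), giving $\Ker j_2={}_2R\oplus I_2/2R$ per square slot as you claim. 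With these corrections the rank-one bookkeeping and the assembly over the index set go through exactly as in the paper.
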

\begin{proof}
 Suppose first that $M=R$. Then diagram \eqref{dg:diagr1} becomes
\begin{equation}\label{dg:diagr2}
 \xymatrix{R\ar[r]^{w }\ar[d]_{v }
&R\ar[rd]^{j_{22}} \ar[d]_{\eta_2}\\
I_2\ar[r]^{\eta_1}\ar[rd]_{j_{11}}
&K'(R)\ar[d]_{j_1}\ar[r]^{j_2}&I_2 \ar[d]_{\theta_2}\\
&R\ar[r]^{\theta_1} &K(R)}
\end{equation}  
with the maps
\begin{align*}
 v (x)&=2x,&w &=2Id,&j_{11}(x)&=x,&
j_{12}&=Id,&j_{21}&=2Id,&j_{22}(x)&=2x.
\end{align*}
We then get the following kernels:
\begin{align*}
 \Ker v =\Ker w &=\Ker j_{22}={}_2R,&
\Ker j_{21}&={}_2I_2,\\
\Ker j_{12}&=\Ker j_{11}=0,&
\Ker\eta_1&=\Ker\eta_2=0.
\end{align*}
Since $j_{12}=Id$, $R$ is a summand of $K'(R)$, and $K'(R)=R\oplus \Ker j_1$. Now $\Ker j_1=\{\eta_1(x) - \eta_2(x)\,|\,x\in I_2\}$, hence $j_2 \Ker j_1=0$. Thus  $ j_2(x,y)=2x$ for $(x,y)\in R \times \Ker j_1$, whence
\begin{align*}
 \Ker j_2&={}_2R\oplus \Ker j_1,&K(R)=I_2, & &\theta_2=Id, & &\theta_1(x)=2x\,.
\end{align*}
  So finally  $\Ker \theta_1={}_2R$ and $\Ker \theta_2=0$.

Now let $M$ be a free $R$-module with basis $\{e_i\}_{i\in I}$. The module $\SY{M}$ (resp. $\GA{M}$) is also free with basis $\{e_i^2\}_{i\in I}\cup \{e_{i'}e_{i''}\}_{i'<i''}$ (resp. $\{\ga{e_i}\}_{i\in I}\cup \{\gamma_1(e_{i'})\gamma_1(e_{i''})\}_{i'<i''}$). Since any map in  diagram \eqref{dg:diagr1} acts diagonally with respect to these bases, it is sufficient to consider the effect of the maps on one square term, that is the case $M=R$ above, and the effect of the maps on one rectangular term, that is $e_{i'}e_{i''}$ (resp. $\gamma_1(e_{i'})\gamma_1(e_{i''})$). In the latter case we have the same diagram as in \eqref{dg:diagr2}, but the maps are:
\begin{align*}
v (x)&=2x&
w &=Id&
j_{11}(x)&=x& 
j_{12}&=2Id&
j_{21}&=Id&
j_{22}(x)&=2x
\end{align*}
We then obtain $K'=I_2$, $\eta_1=Id$, $\eta_2=v $, $j_2=Id$, $K=R$, $\theta_1=Id$ and $j_1=\theta_2=j_{11}$. Thus for a rectangular term we get 
\begin{equation*}
 \Ker v =\Ker j_{12}=\Ker j_{22}=\Ker \eta_2={}_2R,
\end{equation*}
and the other kernels are zero. Summarizing the results above we obtain the proposition.
\end{proof}
\begin{rem} As a byproduct of the proof we get that for a free $R$-module $M$:
 \begin{align*}
K'(M)&=\left(\oplus_i(R\oplus(I_2/2R))\right)\oplus
\left(\oplus_{i'<i''}I_2\right)\\
K(M)&=\left(\oplus_i\ I_2\right)\oplus
\left(\oplus_{i'<i''}R\right).
 \end{align*}
\end{rem}

\section{Exact sequences for $\PR{M}$}
\label{exsequsforP2}

We are now ready to compute the 
kernels and cokernels of the structure maps $\varphi_1$ and $\varphi_2$, thus providing natural exact sequences expressing $\PR{M}$ in terms of the ideal $I_2$ and the simpler functors ${\rm Sym}^2_R$, $\Lambda^2_R$ and $\Gamma^2_R$.

\subsection{The map $\varphi_1:\SY{M}\to \PR{M}$}
\label{sec:mapphi1}
\begin{lem}
 Let $M$ be an $R$-module and $d$ be the map defined in \ref{eq:defd}. Then the sequence
\begin{equation}\label{se:shexseqphi1}
 \xymatrix{\SY{M}/\IM{d}\ar[r]^{\qquad\ \bar\varphi_1}&\PR{M}\ar[r]^{q_1} &\coker \varphi_1\ar[r]&0}
\end{equation}
is exact. It is short exact if $M$ is free.
\end{lem}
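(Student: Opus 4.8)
The plan is to exhibit the exactness of \eqref{se:shexseqphi1} as an immediate consequence of Theorem~\ref{th:main}, and to prove short exactness in the free case by computing $\Ker \varphi_1$ explicitly. First note that $q_1 \varphi_1 = 0$ by definition of the cokernel, so to get exactness of \eqref{se:shexseqphi1} it suffices to show that $\bar\varphi_1$ has the same image as $\varphi_1$, i.e. that $\varphi_1$ factors through $\SY{M}/\IM{d}$; but $\varphi_1 d = 0$ since for $r \in {}_2R$, $\varphi_1 d(r\tp x) = \varphi_1(r x^2) = r\,\varphi_1(x^2) = r\,d_p(x,x) = d_p(rx,x) - \ldots$ — more simply, $r\,\varphi_1(x^2) = \varphi_1(2\cdot \tfrac{r}{?}\ldots)$; the clean argument is that $\varphi_1 j_{12} = \varphi_2 j_{22}$ (Lemma~\ref{lm:lemA}) so $\varphi_1(x^2) = \varphi_1 j_{12}(\gamma_2(x)) = \varphi_2 j_{22}(\gamma_2(x)) = \varphi_2(2\tp\gamma_2(x))$, and hence $r\,\varphi_1(x^2) = \varphi_2(2r\tp\gamma_2(x)) = \varphi_2(0) = 0$ when $2r=0$. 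Thus $\IM{d} \subseteq \Ker\varphi_1$ always, giving the induced map $\bar\varphi_1$ and the exactness of \eqref{se:shexseqphi1} with $\IM\bar\varphi_1 = \IM\varphi_1 = \Ker q_1$.

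For short exactness when $M$ is free, I would identify $\Ker \bar\varphi_1$ with $(\Ker\varphi_1)/\IM d$ and show this is zero, i.e. $\Ker \varphi_1 = \IM d$. Here the natural tool is the pushout description of $K(M)$ from Corollary~\ref{cor:2pushouts}: since $\varphi = \varphi_1$ on the image of $\theta_1$ and $\varphi$ is injective by Theorem~\ref{th:main}, one has $\Ker \varphi_1 = \Ker(\theta_1 : \SY{M} \to K(M))$. By Proposition~\ref{pr:kerdiag}, equation \eqref{eq:ker6}, for $M$ free we have $\Ker \theta_1 = \oplus_i\ {}_2R \simeq {}_2R^{[1]}\tp M$. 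On the other hand, by Lemma~\ref{lm:GAsequ} (or the computation $\Ker w = \bigoplus_i ({}_2R)e_i^2 = \IM d$ in its proof), $\IM d = \bigoplus_i ({}_2R)e_i^2$, which is precisely $\oplus_i {}_2R$ sitting inside $\SY{M}$ via the square basis elements. So the two submodules coincide, and $\Ker \bar\varphi_1 = \IM d / \IM d = 0$.

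The step I expect to require the most care is the identification $\Ker \varphi_1 = \Ker \theta_1$. One direction is clear: $\theta_1(z) = 0$ forces $\varphi_1(z) = \varphi(\theta_1(z)) = 0$. For the converse one needs that $\varphi: K(M) \to \PR{M}$ is injective, which is exactly Theorem~\ref{th:main}, and that the composite $\SY{M} \xrightarrow{\theta_1} K(M) \xrightarrow{\varphi} \PR{M}$ really is $\varphi_1$, which is the commutativity of the lower triangle in diagram~\eqref{dg:diagr1}. Granting these, $\varphi_1(z) = 0 \iff \varphi(\theta_1(z)) = 0 \iff \theta_1(z) = 0$. The only remaining subtlety is bookkeeping the basis: one must check that the submodule $\oplus_i {}_2R$ appearing as $\Ker\theta_1$ in \eqref{eq:ker6} is carried by the diagonal identification onto the span of the ${}_2R\cdot e_i^2$, not onto some twisted copy — but this is transparent from the proof of Proposition~\ref{pr:kerdiag}, where $\Ker\theta_1 = {}_2R$ arises precisely from the ``square'' summands $R \cdot e_i^2$ of $\SY{M}$ on which $\theta_1$ acts as multiplication by $2$. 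Hence $\Ker\theta_1 = \IM d$ and \eqref{se:shexseqphi1} is short exact for free $M$.
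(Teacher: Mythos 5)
Your proof is correct and is essentially the paper's own argument: one first checks $\IM{d}\subseteq\Ker{\varphi_1}$ to define $\bar\varphi_1$ (whence exactness), and for free $M$ one gets $\Ker{\varphi_1}=\Ker{\theta_1}=\IM{d}$ from $\varphi_1=\varphi\,\rond\,\theta_1$, the injectivity of $\varphi$ in Theorem \ref{th:main}, and \eqref{eq:ker6}, exactly as in the text. The only (harmless) deviation is that you deduce $\varphi_1\,\rond\, d=0$ from the relation $\varphi_1 j_{12}=\varphi_2 j_{22}$ of Lemma \ref{lm:lemA} instead of the paper's direct computation with \eqref{eq:defscal1P}; this is fine, but delete the garbled fragment preceding ``the clean argument''.
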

\begin{proof}
For $r\tp x$ in $({}_2R)^{[1]}\tp M$ we have using \eqref{eq:defscal1P} for $r=2$ and $s=r$
\begin{equation*}
 \varphi_1(d(r\tp x))=\varphi_1(rx^2)=r\varphi_1(x^2)=
r(p(2x)-2p(x))=rp(2x)=-4p(rx)=0,
\end{equation*}
hence $\IM{d}\subset \Ker \varphi_1$, the map $\bar\varphi_1$ is defined and the first part is proved. Suppose moreover $M$ is free. Since $\varphi$ is injective, $\Ker \varphi_1=\Ker \theta_1=\IM{d}$ by \eqref{eq:ker6}.
\end{proof}

\begin{thm}
Let $\psi_1\ :(I_2/2R)^{[1]}\tp M\to \coker \varphi_1$ be the map defined by
$\psi_1(\overline{r^2-r}\tp x) =  q_1\varphi_2((r^2-r)\tp \gamma_2(x))$ and let $\varepsilon_1\ :\coker\varphi_1\to M$ be the map induced by $\varepsilon$.  We have the following two natural exact sequences:
\begin{equation}\label{se:cokphi1}
 \xymatrix{0\ar[r]&(I_2/2R)^{[1]}\tp M \ar[r]^{\qquad\psi_1}& 
\coker \varphi_1 \ar[r]^{\quad\varepsilon_1} &M\ar[r]&0,}
\end{equation}
\begin{equation*}
 \xymatrix{{\rm Tor}\/_1^R((I_2/2R)^{[1]},M)\ar[r]^{\ \tau_1} &  \SY{M}/\IM{d}\ar[r]^{\qquad\bar\varphi_1}& \PR{M}\ar[r]^{q_1}&
\coker (\varphi_1)\ar[r]& 0 }
\end{equation*}
%
with $\tau_1=\tau\,\rond \, {\rm Tor}\/_1^R(\iota^{[1]}, Id)$, where $\tau$ is defined in Theorem \ref{th:GMsequ} and $\iota:I_2/2R\to R/2R$ is  the inclusion.
\end{thm}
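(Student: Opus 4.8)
The plan is to derive both sequences from the pushout description of $\PR{M}$ in diagram \eqref{dg:diagr1} together with the cokernel/kernel computations of Propositions \ref{pr:cokerdiag} and \ref{pr:kerdiag}, and then promote the statement from free modules to arbitrary $M$ by a Dold--Puppe derived functor argument exactly as in the passage from Lemma \ref{lm:GAsequ} to Theorem \ref{th:GMsequ}. First I would treat $\coker\varphi_1$. Because the left-hand square in \eqref{dg:diagr1} is a pushout, $\PR{M}=\coker(\varphi)$ is obtained from $K(M)$ by the structure maps $\theta_1,\theta_2$, and $\coker\varphi_1$ is the quotient of $\PR{M}$ by the image of $\theta_1$, i.e.\ the cokernel of $\theta_1$ pushed forward into $\PR{M}/\IM\varphi_1$. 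Using $\coker\theta_1\simeq(I_2/2R)^{[1]}\tp M$ from \eqref{eq:coker8}, one identifies the submodule $\psi_1((I_2/2R)^{[1]}\tp M)$ of $\coker\varphi_1$ with the image of $\varphi_2$ modulo $\varphi_1$; the relations of Lemma \ref{lm:lemA} (precisely $\varphi_1 j_{11}=\varphi_2 j_{21}$ and $\varphi_1 j_{12}=\varphi_2 j_{22}$) show that $\psi_1$ is well-defined on $(I_2/2R)^{[1]}\tp M$ and not just on $I_2\tp\GA{M}$. Surjectivity of $\varepsilon_1$ is clear, exactness in the middle of \eqref{se:cokphi1} follows because $\Ker\varepsilon$ is generated by $\IM\varphi_1$ and $\IM\varphi_2$, and left-exactness (injectivity of $\psi_1$) is the one point requiring the kernel computations: for $M$ free it follows from $\Ker\theta_1=\IM d$ via \eqref{eq:ker6} and a diagram chase in \eqref{dg:diagr1}, and for general $M$ it follows by applying ${\mathcal D}_0$ to the free case, since all three functors in \eqref{se:cokphi1} are right-exact and $(I_2/2R)^{[1]}\tp-$ is additive so ${\mathcal D}_0$ recovers it.

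\textbf{The second sequence.} Next I would splice the short exact sequence \eqref{se:shexseqphi1} (valid for $M$ free) into the derived-functor machinery. For $M$ free the lemma preceding the theorem already gives exactness of
\begin{equation*}
0\to\SY{M}/\IM d\mr{\bar\varphi_1}\PR{M}\mr{q_1}\coker\varphi_1\to 0,
\end{equation*}
and \eqref{se:cokphi1} is short exact. Applying the Dold--Puppe derived functors ${\mathcal D}_n$ and the associated long exact homotopy sequence to these, exactly as in the deduction of Theorem \ref{th:GMsequ}, yields the four-term sequence ending in $\coker\varphi_1$ with connecting map landing in ${\mathcal D}_0(\SY{-}/\IM d)(M)=\SY{M}/\IM d$ (right-exactness of ${\rm Sym}^2_R$, of $({}_2R)^{[1]}\tp-$, and the snake lemma give ${\mathcal D}_0(\coker d)\cong\coker d$, as already observed in the text) and originating from ${\mathcal D}_1((I_2/2R)^{[1]}\tp-)(M)={\rm Tor}_1^R((I_2/2R)^{[1]},M)$, the last equality because $(I_2/2R)^{[1]}\tp-$ is additive. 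The identification $\tau_1=\tau\rond{\rm Tor}_1^R(\iota^{[1]},Id)$ comes from naturality of the connecting homomorphism applied to the map of short exact sequences relating \eqref{se:Gsequ} and \eqref{se:shexseqphi1}: the inclusion $\iota:I_2/2R\hookrightarrow R/2R$ induces $\iota^{[1]}\tp Id$ on cokernels, and the middle term $\GA{M}$ maps to $\PR{M}$ via $\varphi_2(D(\cdot)\tp-)$, so the snake connecting maps are compatible. One should double-check that this map of sequences is indeed a morphism of short exact sequences (the left-hand vertical being the identity on $\SY{M}/\IM d$, or at least compatible with $\bar w$ and $\bar\varphi_1$).

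\textbf{Main obstacle.} The routine parts are the well-definedness of $\psi_1$ and $\varepsilon_1$ and the middle exactness, which reduce to Lemma \ref{lm:lemA} and the generation statement for $\Ker\varepsilon$. The delicate point I expect to be the real work is verifying injectivity of $\psi_1$ and, simultaneously, that the free-module case of the second sequence is genuinely short exact, i.e.\ that $\Ker\varphi_1=\IM d$ when $M$ is free; this rests on $\varphi$ being injective (Theorem \ref{th:main}) together with $\Ker\theta_1=\IM d$, which in turn traces back through the pushout diagram to Proposition \ref{pr:kerdiag}, relation \eqref{eq:ker6}. The second genuinely non-formal step is pinning down the connecting homomorphism $\tau_1$: one must check that the comparison morphism between the Gamma-sequence \eqref{se:Gsequ} and the $\varphi_1$-sequence is a morphism of complexes compatibly with the free resolution used to define both $\tau$ and $\tau_1$, so that naturality of the Dold--Puppe connecting map yields exactly $\tau_1=\tau\rond{\rm Tor}_1^R(\iota^{[1]},Id)$ rather than some twist of it.
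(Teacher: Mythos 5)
Your outline follows the paper's own strategy (pushout plus cokernel computations for the first sequence, Dold--Puppe derived functors and a comparison of connecting maps for the second), but two steps do not work as you state them. For the first sequence you prove injectivity of $\psi_1$ only for free $M$ and then pass to general $M$ ``by applying ${\mathcal D}_0$, since all three functors are right-exact''. Right-exact functors, and ${\mathcal D}_0$, do not preserve monomorphisms, so this deduction fails as written; it could be repaired via the homotopy long exact sequence together with ${\mathcal D}_1(Id)=0$ and ${\mathcal D}_0(\coker\varphi_1)\cong\coker\varphi_1$, but the detour is unnecessary: since $\varphi$ is injective (theorem \ref{th:main}) and $\varepsilon=\varepsilon_1q_1$, one has $\Ker\varepsilon_1=\coker\theta_1$, and \eqref{eq:coker8} identifies $\coker\theta_1$ with $(I_2/2R)^{[1]}\tp M$ for \emph{arbitrary} $M$ --- proposition \ref{pr:cokerdiag} requires no freeness, only proposition \ref{pr:kerdiag} does. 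This is the paper's one-line argument and is exactly what makes $\psi_1$ injective; the fact $\Ker\theta_1=\IM{d}$, to which you attribute the difficulty, is instead what gives short exactness of \eqref{se:shexseqphi1} for free $M$, which you do use correctly as input to the second sequence.

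The genuine gap is in your identification of $\tau_1$. You propose a morphism of short exact sequences from \eqref{se:Gsequ} to \eqref{se:shexseqphi1} whose middle arrow is a map $\GA{M}\to\PR{M}$ ``via $\varphi_2(D(\cdot)\tp -)$''. No such $R$-linear map exists: $\varphi_2$ is only defined on $I_2\tp\GA{M}$, and there is no arrow $\GA{M}\to\PR{M}$ compatible with the identity on $\SY{M}/\IM{d}$ --- already for $R=\Z$, $M=\Z$ one would need an element of $P^2_{\Z}(\Z)$ whose double is $p(2)-2p(1)=(0,1)$, which does not exist. The comparison must go the opposite way, via $g_2:\PR{M}\to\GA{M}$, $p(m)\mapsto\gamma_2(m)$: since $g_2\varphi_1=w$, it induces the identity on $\SY{M}/\IM{d}$ and a map $\bar g_2:\coker\varphi_1\to (R/2R)^{[1]}\tp M$, and naturality of connecting maps gives $\tau'=\tau\,\rond\,{\mathcal D}_1(\bar g_2)$, where $\tau'$ is the connecting map of \eqref{se:shexseqphi1}. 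Moreover the source of $\tau_1$ must still be converted from ${\mathcal D}_1(\coker\varphi_1)$ to ${\rm Tor}_1^R((I_2/2R)^{[1]},M)$, a step you only allude to: apply the homotopy long exact sequence to \eqref{se:cokphi1} and use ${\mathcal D}_1(Id)={\mathcal D}_2(Id)=0$ to conclude that ${\mathcal D}_1(\psi_1)$ is an isomorphism. Then the computation $\bar g_2\,\rond\,\psi_1=\iota^{[1]}\tp Id$ gives $\tau_1=\tau'\,\rond\,{\mathcal D}_1(\psi_1)=\tau\,\rond\,{\rm Tor}_1^R(\iota^{[1]},Id)$. Your final formula is the right one, but the mechanism you describe for obtaining it would not run.
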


Taking $R=\Z$ we rediscover the exact sequence $0\to {\rm Sym}^2_{\Z}(M) \to P^2_{\Z}(M) \to M \to 0$ due to Passi \cite{PaQ2}. 

\begin{proof}
 Since $K(M)$ is the kernel of $\varepsilon$ and $\varepsilon=\varepsilon_1q_1$  we get $\coker\theta_1=\Ker\varepsilon_1$, so by \eqref{eq:coker8} the first sequence is exact. Now since  the exact sequence \eqref{se:shexseqphi1} is short exact when  $M$ is free, we can left-complete it by the first derived functor $\mathcal D^1(\coker \varphi_1)$ with connecting morphism $\tau'$.
But applying the long exact homotopy sequence  to the sequence \eqref{se:cokphi1} we obtain
\begin{equation*}
 0=\mathcal D^2(Id)\to \mathcal D^1 ((I_2/2R)^{[1]}\tp -)
\mr{\mathcal D^1(\psi_1)}\mathcal D^1(\coker\varphi_1) \to\mathcal D^1(Id)=0
\end{equation*}
hence $\mathcal D^1(\psi_1)$ is an isomorphism, so $\tau_1=\tau'\,\rond \,\mathcal D^1(\psi_1)$. Now consider the diagram
\begin{equation*}
 \xymatrix{
\mathcal D^1(\coker\varphi_1)\ar[d]^{\mathcal D^1(\bar g_2)}
\ar[r]^{\tau'} &  \SY{M}/\IM{d}\ar[r]^{\qquad\bar\varphi_1}\ar[d]^=& \PR{M}\ar[r]^{q_1}\ar[d]^{g_2}&\coker (\varphi_1)\ar[r]\ar[d]^{\bar g_2}& 0
\\
{\rm Tor}_1^R((R/2R)^{[1]},M) \ar[r]^{\ \tau}& \SY{M}/\IM{d}  
\ar[r]^{\qquad\bar{w}}& \GA{M} \ar[r]^{\rho\quad} &(R/2R)^{[1]} \tp M \ar[r]& 0
}
\end{equation*}
Its lines are exact and the central square commutes, thus the diagram is commutative, and 
$\tau_1=\tau\,\rond \,\mathcal D^1(\bar g_2)\,\rond \,\mathcal D^1(\psi_1)$. This implies the assertion since a simple computation shows that 
$\bar g_2\,\rond \,\psi_1=\iota^{[1]}\tp Id$.
\end{proof}
\vspace{-1mm}

\subsection{The map $\varphi_2:I_2\tp \GA{M}\to \PR{M}$}
\label{sec:mapphi2}

\begin{thm}
Let $\psi_2\ :(R/I_2)\tp \LA{M}\to \coker \varphi_2$ be the map defined by
$\psi_2(\overline{r}\tp x\wedge y) =  q_2\varphi_1(r\,xy )$ and $\varepsilon_2\ :\coker\varphi_2\to M$ be the map induced by $\varepsilon$.  We have the following two natural exact sequences:
\begin{equation*}
 \xymatrix{0\ar[r]&(R/I_2)\tp \LA{M} \ar[r]^{\qquad\psi_2}& 
\coker \varphi_2 \ar[r]^{\quad\varepsilon_2} &M\ar[r]&0,}
\end{equation*}
\begin{equation*}
 \xymatrix{{\rm Tor}\/_1^R((R/I_2),\SY{M})\ar[r]^{\qquad\quad \tau_2} &  I_2\tp\GA{M}\ar[r]^{\quad\varphi_2}& \PR{M}\ar[r]^{q_2}&
\coker (\varphi_2)\ar[r]& 0 }
\end{equation*}
where $\tau_2$ is the composite of the connecting morphism 
${\rm Tor}\/_1^R((R/I_2),\SY{M})\to 
I_2\tp \SY{M}
$ and of the morphism 
$j_{21}:I_2\tp \SY{M}\to I_2\tp \GA{M}$.
\end{thm}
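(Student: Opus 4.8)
The plan is to deduce both sequences from Theorem~\ref{th:main} and the analysis of diagram~\eqref{dg:diagr1}, in the spirit of the preceding treatment of $\varphi_1$. \emph{First sequence.} By Theorem~\ref{th:main} the map $\varphi$ is injective, and $\varphi_2=\varphi\theta_2$ by \eqref{dg:diagr1}, so $\IM\varphi_2=\varphi(\IM\theta_2)$ and $\varphi$ carries $\IM\theta_2\subseteq K(M)$ isomorphically onto $\IM\varphi_2$. Dividing the short exact sequence $0\to K(M)\mr{\varphi}\PR{M}\mr{\varepsilon}M\to 0$ by $\IM\theta_2$ therefore yields a short exact sequence $0\to\coker\theta_2\mr{\bar\varphi}\coker\varphi_2\mr{\varepsilon_2}M\to 0$, with $\varepsilon_2$ induced by $\varepsilon$. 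Now $\coker\theta_2\cong(R/I_2)\tp\LA{M}$ by \eqref{eq:coker7}; since in the pushout \eqref{po:pusho2} one has $\theta_1^{-1}(\IM\theta_2)=\IM(j_{11},j_{12})=I_2\SY{M}+\IM j_{12}$, this isomorphism identifies $\coker\theta_2$ with $\SY{M}/(I_2\SY{M}+\IM j_{12})$ so that the class of $\theta_1(xy)$ corresponds to $\bar1\tp x\wedge y$. Composing with $\bar\varphi$ and using $\varphi\theta_1=\varphi_1$, the resulting monomorphism $(R/I_2)\tp\LA{M}\hookrightarrow\coker\varphi_2$ sends $\bar r\tp x\wedge y$ to $q_2\varphi_1(r\,xy)$, i.e.\ it is $\psi_2$; this proves exactness of the first sequence.

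\emph{Second sequence.} Exactness at $\PR{M}$ and at $\coker\varphi_2$ is immediate from the definition of $q_2$; the point is exactness at $I_2\tp\GA{M}$, i.e.\ $\Ker\varphi_2=\IM\tau_2$. Tensoring $0\to I_2\to R\to R/I_2\to 0$ with $\SY{M}$ and using ${\rm Tor}_1^R(R,\SY{M})=0$, the connecting map $\partial\colon{\rm Tor}_1^R(R/I_2,\SY{M})\to I_2\tp\SY{M}$ is injective with image $\Ker(I_2\tp\SY{M}\to R\tp\SY{M}=\SY{M})=\Ker j_{11}$; hence $\tau_2=j_{21}\partial$ is exactly the map in the statement and $\IM\tau_2=j_{21}(\Ker j_{11})$. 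On the other hand $\varphi_2=\varphi\theta_2$ with $\varphi$ injective, so $\Ker\varphi_2=\Ker\theta_2$, and the theorem reduces to the identity $\Ker\theta_2=j_{21}(\Ker j_{11})$. Since $\theta_2$ is the structure map $I_2\tp\GA{M}\to K(M)$ of the pushout \eqref{po:pusho2}, and since in any pushout of legs $f\colon C\to A$, $g\colon C\to B$ with structure maps $\theta_A\colon A\to P$, $\theta_B\colon B\to P$ one has $\Ker\theta_B=g(\Ker f)$, we obtain $\Ker\theta_2=(j_{21},j_{22})\bigl(\Ker(j_{11},j_{12})\bigr)=\{\,j_{21}(a)+j_{22}(b)\ :\ j_{11}(a)+j_{12}(b)=0\,\}$. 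The inclusion $\supseteq$ is clear (take $b=0$). For $\subseteq$ one writes $b=b_0+w(s)$, where $b_0=\sum_k a_k\gamma_2(m_k)$ lifts $\rho(b)\in\coker w$ along $\rho$ and $s\in\SY{M}$ (possible since $\Ker\rho=\IM w$ by Theorem~\ref{th:GMsequ}); the relations $j_{11}v=j_{12}w$, $j_{21}v=j_{22}w$ of Lemma~\ref{lm:lemA} reduce the claim to the case $b=b_0$, which amounts to showing that $(j_{12}(b_0),j_{22}(b_0))$ lies in the image of $(j_{11},j_{21})\colon I_2\tp\SY{M}\to\SY{M}\oplus(I_2\tp\GA{M})$. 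This is checked basis-wise when $M$ is free (it is \eqref{eq:ker6} of Proposition~\ref{pr:kerdiag}, which gives $\Ker\theta_2=0$ there) and then transported to arbitrary $M$ by choosing a presentation $M_1\to M_0\to M\to 0$ and using the naturality of \eqref{dg:diagr1} together with Theorem~\ref{th:GMsequ} (for $\Ker w$ and $\coker w$) and Corollary~\ref{cor:presI2} (for $I_2$).

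\emph{Main obstacle.} Everything apart from the identity $\Ker\theta_2=j_{21}(\Ker j_{11})$ is formal diagram chasing resting on Theorem~\ref{th:main} and Propositions~\ref{pr:cokerdiag}--\ref{pr:kerdiag}; the substantial part is the transfer, i.e.\ absorbing the $\GA{M}$-coordinate of an element of $\Ker(j_{11},j_{12})$ into the $I_2\tp\SY{M}$-coordinate without leaving the kernel, and above all its passage from free to general $M$. An alternative, entirely parallel to the $\varphi_1$-case, would be to note that $0\to I_2\tp\GA{M}\mr{\varphi_2}\PR{M}\mr{q_2}\coker\varphi_2\to 0$ is short exact for free $M$ (as $\Ker\theta_2=0$ then, by \eqref{eq:ker6}), left-complete it by the Dold--Puppe functor $\mathcal D^1(\coker\varphi_2)$, identify $\mathcal D^1(\coker\varphi_2)\cong\mathcal D^1((R/I_2)\tp\LA{-})$ via the first sequence, and use a comparison square built from the natural transformation $\PR{M}\to\SY{M}$, $p(x)\mapsto x^2$, to recognise the connecting map as $j_{21}\partial$; the delicate point on that route is that $\Lambda^2_R$ is a quadratic functor, so $\mathcal D^1((R/I_2)\tp\LA{-})$ is not merely a Tor group.
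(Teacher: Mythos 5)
Your handling of the first sequence and of all the formal reductions (injectivity of $\varphi$, the pushout identity $\Ker\theta_2=(j_{21},j_{22})\Ker(j_{11},j_{12})$, the identification $\coker\theta_2\cong(R/I_2)\tp\LA{M}$ and of the monomorphism with $\psi_2$, the description of $\tau_2$ as $j_{21}\circ\partial$ with $\partial$ injective onto $\Ker j_{11}$) is correct and coincides with the paper's route. The gap is exactly at the step you isolate yourself: the inclusion $\Ker\theta_2\subseteq j_{21}(\Ker j_{11})$, which after your (correct) reduction becomes the claim that $(j_{12}(b_0),j_{22}(b_0))\in\IM(j_{11},j_{21})$ whenever $j_{12}(b_0)\in\IM j_{11}$. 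Your justification --- ``checked basis-wise when $M$ is free, then transported to arbitrary $M$ by choosing a presentation and using naturality'' --- does not work. For free $M$ the claim is vacuous: by Proposition~\ref{pr:kerdiag} (see \eqref{eq:ker2} and \eqref{eq:ker6}) both $\Ker j_{11}$ and $\Ker\theta_2$ vanish, so the free case carries no information; and a statement about kernels cannot be carried from $M_0$ to $M$ along a right-exact presentation $M_1\to M_0\to M\to 0$ by naturality alone --- right-exactness controls cokernels, while the whole content here (surjectivity of ${\rm Tor}_1^R(R/I_2,\SY{M})\to\Ker\varphi_2$) concerns precisely non-free $M$, where both sides are in general nonzero.

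What is missing is the paper's genuinely non-formal ingredient. There, $\Ker(j_{11},j_{12})$ is analyzed modulo $\IM{(v,-w)}$ and modulo the image of a lift $\zeta$ of $(I_2/2R)^{[1]}\tp M$, which reduces the question to the vanishing of $\Ker j''_{12}$ for $j''_{12}:(R/I_2)\tp M\to(R/I_2)\tp\SY{M}$, $\bar r\tp x\mapsto\bar r\tp x^2$; this vanishing is Lemma~\ref{lm:car2sym}, whose proof in turn rests on Lemma~\ref{lm:rigI2null} (a finitely generated ring with $I_2=0$ is a product of copies of $\F_2$) together with a filtered-colimit/finite-presentation argument. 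Nothing in your proposal plays the role of this lemma, and without it a possible obstruction term between $j_{21}(\Ker j_{11})$ and $\Ker\theta_2$ is never excluded. Your alternative Dold--Puppe route is likewise only a sketch: as you note, $\Lambda^2_R$ is quadratic, so ${\mathcal D}^1((R/I_2)\tp\Lambda^2_R(-))$ is not a Tor group, and identifying the connecting homomorphism there would require comparable additional work rather than the formal comparison square you indicate.
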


\begin{proof}
By definition \eqref{po:pusho2} of $K(M)$  we have the pushout
\begin{equation*}\xymatrixcolsep{3pc}
 \xymatrix{(I_2\tp \SY{M})\oplus \GA{M}\ar[d]_{(j_{11},j_{12})}
\ar[r]^{\hspace{12mm}(j_{21},j_{22})}&I_2\tp \GA{M}\ar[d]^{\theta_2}\\
\SY{M}\ar[r]^{\theta_1}&K(M)}
\end{equation*}
It follows that 
\begin{equation*}
\Ker \theta_2 =  (j_{21},j_{22}) \Ker (j_{11},j_{12})\,.
\end{equation*}
One has the exact sequence
\begin{equation*}
 \xymatrix{0\ar[r]&\Ker j_{11}\ar[r]&\Ker (j_{11},j_{12})\ar[r]&\Ker \bar j_{12}\ar[r]&0}
\end{equation*}
where $\bar j_{12}$ is the composite map 
\begin{equation*}
 \xymatrix{\GA{M}\ar[r]^{j_{12}}&\SY{M}\ar[r]&(R/I_2)\tp\SY{M}} 
\end{equation*}
and where the first map is induced by the inclusion and the second one by the projection to the second factor.  Note that $w $ takes values in $\Ker \bar j_{12}$ since $j_{12}w  = 2Id$ and $2\in I_2$.
>From the commutative diagram
\begin{equation*}
 \xymatrix{&&\SY{M}\ar[r]^=\ar[d]^{(v ,-w )}&\SY{M}\ar[d]^{w }\\
0\ar[r]&\Ker j_{11}\ar[r]&\Ker (j_{11},j_{12})\ar[r]&\Ker \bar j_{12}\ar[r]&0}
\end{equation*}
we deduce the exact sequence of cokernels :
\begin{equation*}
 \xymatrix{\Ker j_{11}\ar[r]&\Ker(j_{11},j_{12})/\IM{(v ,-w )}\ar[r]& 
\Ker \bar j_{12}/\IM{ w }\ar[r]&0}
\end{equation*}
where $\Ker \bar j_{12}/\IM{w }$ can be identified with the kernel of the map 
\begin{align*}j'_{12}\ :\ 
(R/2R)^{[1]}\tp M&\to(R/I_2)\tp \SY{M}& \bar r\tp x &\mapsto \bar r\tp x^2.
\end{align*}
Clearly this kernel contains the image of $(I_2/2R)^{[1]}\tp M$. Now this map $(I_2/2R)^{[1]}\tp M\to \Ker j'_{12}$ lifts to an $R$-linear map 
\begin{align*}
 \zeta\ :\ (I_2/2R)^{[1]}\tp M&\to \Ker (j_{11},j_{12})/\IM{(v ,-w )}\\
\overline{(r^2-r)}\tp x&\mapsto \overline{((r^2-r)\ga{x},-(r^2-r)\tp x^2)}.
\end{align*}
We then get the commutative diagram
\begin{equation*}
 \xymatrix{
&(I_2/2R)^{[1]}\tp M\ar[r]^=\ar[d]^\zeta& (I_2/2R)^{[1]}\tp M\ar[d]\\
\Ker j_{11}\ar[r]&\Ker (j_{11},j_{12})/\IM{(v ,-w )}\ar[r]&
\Ker j'_{12}\ar[r]&0}
\end{equation*}
which leads to the exact sequence of cokernels : 
\begin{equation*}
 \xymatrix{\Ker j_{11}\ar[r]&
\Bigl(\Ker (j_{11},j_{12})/\IM{(v ,-w )}\Bigr)/\IM{\zeta}\ar[r]&
\Ker j''_{12}\ar[r]&0}
\end{equation*}
where $j''_{12}$ is the map $(R/I_2)\tp M\to (R/I_2)\tp \SY{M}$ such that $j''_{12}(\bar r\tp x)=\bar r\tp x^2$. Now, by the following lemma \ref{lm:car2sym}, $j''_{12}$ is injective. We then obtain a surjection 
\begin{equation}\label{eq:surj}
 \Ker j_{11}\twoheadrightarrow
\Bigl(\Ker (j_{11},j_{12})/\IM{(v ,-w )}\Bigr)/\IM{\zeta}.
\end{equation} 
On the other hand $\IM{(v ,-w )}$ is contained in $\Ker (j_{21},j_{22})$, thus the surjection $\Ker(j_{11},j_{12})$ onto 
$\Ker\theta_2$ induced by $(j_{21},j_{22})$ factors by a surjection of
 $\Ker(j_{11},j_{12})/ \IM{(v ,-w )}$ onto $ \Ker\theta_2$. But 
$\IM{\zeta}$ is also annihilated by $(j_{21},j_{22})$, so we get a surjection
\begin{equation*}
 \Bigl(\Ker (j_{11},j_{12})/\IM{(v ,-w )}\Bigr)/\IM{\zeta} \twoheadrightarrow \Ker \theta_2.
\end{equation*}
 By composition of this surjection with the one of   equation \eqref{eq:surj} we obtain a surjection 
$\Ker j_{11}\twoheadrightarrow\Ker \theta_2 = \Ker \varphi_2$ given by restriction of $j_{21}$.

We then can  conclude the proof, since, tensoring the exact sequence $I_2\rightarrowtail R\twoheadrightarrow R/I_2$ by $\SY{M}$ we get an isomorphism ${\rm Tor}\/^R_1(R/I_2,\SY{M})\to \Ker j_{11}$.
\end{proof}

\begin{lem}\label{lm:car2sym}
Let $R$ be a ring where the ideal $I_2$ is zero. Then for any $R$-module $M$ the $R$-linear map $M\to \SY{M}$, $x\mapsto x^2$ is injective.
\end{lem}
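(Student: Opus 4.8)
The plan is to reduce the statement to the case $R=\F_2$ by localizing at the primes of $R$. First I would unwind the hypothesis: $I_2=0$ means precisely that $r^2-r=0$ for every $r\in R$, i.e.\ that $R$ is a Boolean ring; in particular $2=0$ in $R$. This is exactly what is needed for the squaring map $\sigma_M\colon M\to\SY{M}$, $x\mapsto x^2$, to be $R$-linear (additivity of $\sigma_M$ uses $2=0$, and $\sigma_M(rx)=r^2\sigma_M(x)=r\sigma_M(x)$ uses $r^2=r$), so the statement is at least meaningful.

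Next I would invoke that injectivity of an $R$-linear map may be checked after localizing at each prime $\mathfrak p\subset R$: localization is exact, so $(\Ker\sigma_M)_{\mathfrak p}=\Ker((\sigma_M)_{\mathfrak p})$, and a module is zero iff all its localizations vanish. Since the symmetric square commutes with the base change $R\to R_{\mathfrak p}$, the localized map $(\sigma_M)_{\mathfrak p}$ is identified with the squaring map $\sigma_{M_{\mathfrak p}}\colon M_{\mathfrak p}\to\mathrm{Sym}^2_{R_{\mathfrak p}}(M_{\mathfrak p})$. Now $R_{\mathfrak p}$ is again Boolean (a localization of a Boolean ring is a quotient of it, hence Boolean) and it is local; but a local Boolean ring has no idempotents other than $0$ and $1$, so $R_{\mathfrak p}\cong\F_2$. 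This reduces the whole lemma to the case $R=\F_2$.

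For $R=\F_2$ the module $M_{\mathfrak p}$ is a free $\F_2$-module, say with basis $(e_i)_{i\in I}$, so $\mathrm{Sym}^2_{\F_2}(M_{\mathfrak p})$ is free on the elements $e_i^2$ together with the $e_ie_j$ for $i<j$. For a general element one computes $(\sum_i a_ie_i)^2=\sum_i a_i^2e_i^2+\sum_{i<j}2a_ia_je_ie_j=\sum_i a_ie_i^2$ (using $2=0$ and $a_i^2=a_i$ in $\F_2$), which is zero iff all $a_i=0$; hence $\sigma_{M_{\mathfrak p}}$ is injective, completing the reduction.

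I do not expect a serious obstacle, since every step is standard commutative algebra; the one point I would be careful to state is the identification $R_{\mathfrak p}\cong\F_2$ together with the compatibility of $\mathrm{Sym}^2$ with base change, which together make the reduction legitimate. A localization-free alternative would be to restrict to the cyclic submodule $Rx\cong R/\mathrm{Ann}(x)$ and use that $\mathrm{Sym}^2$ of a cyclic module $R/J$ is $R/J$ with squaring map the identity; but this would also require $\mathrm{Sym}^2$ to preserve the inclusion $Rx\hookrightarrow M$, which is not clear over an arbitrary Boolean ring, so the localization argument looks cleanest.
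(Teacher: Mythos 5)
Your proof is correct, but it takes a genuinely different route from the paper. You reduce to the case $R=\F_2$ by localizing at primes: $I_2=0$ makes $R$ Boolean, localization is exact and a quotient of a Boolean ring, a local Boolean ring is $\F_2$, and ${\rm Sym}^2_R$ commutes with the base change $R\to R_{\mathfrak p}$, so injectivity of $x\mapsto x^2$ may be checked for $\F_2$-vector spaces, where it is a one-line basis computation. The paper instead reduces to \emph{finitely generated} data: it first proves (lemma \ref{lm:rigI2null}) that a ring finitely generated over $\Z$ with $I_2=0$ is $\F_2^m$, settles the lemma in that case, and then handles a general $R$ and $M$ by a spreading-out argument, writing $M$ as a filtered colimit of finitely presented modules and choosing a finitely generated subring of $R$ containing the coefficients of a relation trivializing $x^2$ in $\SY{M}$. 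Your localization argument buys a cleaner reduction that avoids this coefficient-chasing (the only points to state carefully are exactly the ones you flag: base change for ${\rm Sym}^2$, the identification of the localized map with the squaring map over $R_{\mathfrak p}$ -- harmless here since $s^2=s$ -- and $R_{\mathfrak p}\cong\F_2$); the paper's argument, on the other hand, stays elementary (no localization theory) and produces the structural lemma \ref{lm:rigI2null} about rings with $I_2=0$, which has some independent interest in the context of section \ref{sec:presentI2}. Both proofs end with the same trivial computation over $\F_2$.
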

First we prove
\begin{lem}\label{lm:rigI2null}
Let $R$ be a ring, finitely generated over $\Z$, where the ideal $I_2$ is zero. Then $R$ is isomorphic to $\F_2^m$ for some integer $m$.
\end{lem}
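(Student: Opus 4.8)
The plan is to recognize the hypothesis $I_2=0$ as the statement that $R$ is a Boolean ring, and then invoke the standard structure theory of such rings. First I would note that since $I_2$ is generated by the elements $r^2-r$, the vanishing $I_2=0$ forces $r^2=r$ for every $r\in R$; conversely this identity makes all generators vanish, so $I_2=0$ is precisely the condition that every element of $R$ be idempotent. From this I extract two elementary consequences. Applying $x^2=x$ to $x=1+1$ and expanding gives $4=2$ in $R$, whence $2=0$, so $R$ is an $\F_2$-algebra; and if $x^2=0$ then $x=x^2=0$, so $R$ is reduced.

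Next I would establish finiteness. Using that $R$ is finitely generated over $\Z$, write $R$ as a quotient of $\Z[x_1,\ldots,x_n]$; since $2=0$, this presents $R$ as a finitely generated $\F_2$-algebra on the classes of $x_1,\ldots,x_n$. Because each $x_i$ is idempotent, every monomial in the $x_i$ reduces to a squarefree one $\prod_{i\in S}x_i$ with $S\subseteq\{1,\ldots,n\}$, so $R$ is spanned over $\F_2$ by the $2^n$ squarefree monomials. Hence $\dim_{\F_2}R\le 2^n$ and $R$ is a finite ring.

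Finally I would decompose $R$. A finite commutative ring is Artinian, and a reduced Artinian commutative ring is semisimple, hence a finite product of fields $R\cong\prod_{k=1}^{m}F_k$. Each factor $F_k$, being a quotient of $R$, again satisfies $x^2=x$ for all its elements; a field in which every element is a root of $X^2-X$ has at most two elements, so $F_k\cong\F_2$. Therefore $R\cong\F_2^m$, as claimed.

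The steps are all standard, so there is no deep obstacle; the only point requiring genuine care is the finiteness argument, where one must exploit idempotency to bound the number of monomials rather than appeal to Noetherianity alone, since a finitely generated $\F_2$-algebra need not be finite. An alternative to the semisimplicity step, avoiding Artin--Wedderburn, is to decompose the finite commutative ring into a product of \emph{local} rings and observe that a local ring has no idempotents other than $0$ and $1$; since in $R$ every element is idempotent, each local factor reduces to $\{0,1\}$ and is thus $\F_2$, giving $R\cong\F_2^m$ again.
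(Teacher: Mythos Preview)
Your proof is correct and complete. The paper, however, takes a different and more explicit route: it observes the ring isomorphism $R[X]/\langle X^2-X\rangle\cong R\times R$, $\overline{aX+b}\mapsto(a+b,b)$, and iterates it over the generators (together with $2=2^2-2\in I_2$) to identify $\Z[X_1,\ldots,X_n]/I_2$ directly with $\F_2^{2^n}$; since any finitely generated $R$ with $I_2(R)=0$ is a quotient of this universal ring, it is isomorphic to some $\F_2^m$. Your argument instead recognizes the hypothesis as saying that $R$ is a finite Boolean ring and then invokes the structure theory of reduced Artinian (or, in your alternative, finite local) rings to obtain the product decomposition. The paper's approach is entirely self-contained and constructive, requiring no black boxes beyond the Chinese-remainder splitting; yours is the classical treatment and places the lemma in the well-known context of Boolean rings. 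Both arguments yield the same bound $m\le 2^n$ when $R$ has $n$ generators.
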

\begin{proof}
 Let $R$ be  such a ring. Clearly the map 
\begin{align*}
 R[X]/<X^2-X>&\to R\times R & \overline{aX+b}&\mapsto (a+b,b)
\end{align*}
is a ring isomorphism. Thus the ring $\Z[X_1,\dots ,X_n]/I_2$ is  isomorphic to $\F_2^{2^n}$, and any quotient of this ring is itself isomorphic to $\F_2^m$ for some $m$.
\end{proof}
\begin{proof}[Proof of lemma \ref{lm:car2sym}]
 Suppose first $R$ is of finite type over $\Z$. By the lemma above, $R$ is a 
finite product of copies of $\F_2$, and $M$ is a finite product of $\F_2$-vector spaces. We   then must only  prove that the map $M\to {\rm Sym}^2_{\F_2}(M)$ is injective when $M$ is an $\F_2$-vector space, which is clear.

In the general case, since $M$ is a filtered direct limit of finitely presented modules, we can suppose that $M$ is of finite presentation over $R$. So $M$ is the quotient of an $R^m$ by a finite number of relations $\rho_j$, and
in these relations we have only a finite number of coefficients in $R$. Let 
$x=\sum_ir_i\overline{e_i}\in M$ such that $x^2=0$, with $\{e_i\}_i$ being the canonical basis of $R^m$. We can then write this equality over a finitely generated   subring of $R$, generated by the elements $r_i$, by the coefficients of the relations $\rho_j$, 
and by the coefficients $r_{ij}$ of the $R$-linear combination $\sum r_{ij}e_i \rho_j$ in $\SY{R^m}$ trivializing $x^2$ in $\SY{M}$.
 We  then can conclude that $x=0$.
\end{proof}

\subsection{The map $I_2\tp M\to \PR{M}$}
\label{sec:lastmap}
The canonical map $\gamma_2:M\to\GA{M}$ is \Rqq, so it factors through $\PR{M}$. We thus obtain  a surjective $R$-linear map $g_2:\PR{M}\to \GA{M}$, $g_2(p(m))=\ga{m}$. On the other hand, the map $R\times M\to \PR{M}$, $(r,m)\mapsto p_{[r]}(m)=p(rm)-r^2p(m)$ is $R$-linear in $m$, and is \Rq in $r$ and vanishes if $r=0$ or 1; whence it factors through an $R$-linear map $\chi:I_2\tp M\to \PR{M}$,  $\chi((r^2-r)\tp m):=p(rm)-r^2p(m)$. Clearly we get
\begin{prop}
The sequence
\begin{equation}\label{se:segamma}
\xymatrix{I_2\tp M\ar[r]^\chi&\PR{M}\ar[r]^{g_2}&\GA{M}\ar[r]&0} 
\end{equation}
 is exact.
\end{prop}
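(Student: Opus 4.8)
The plan is to verify exactness of the sequence
\[
\xymatrix{I_2\tp M\ar[r]^\chi&\PR{M}\ar[r]^{g_2}&\GA{M}\ar[r]&0}
\]
at each of its three nonzero spots. Surjectivity of $g_2$ on the right is immediate, since $g_2(p(m))=\ga{m}$ and the elements $\ga{m}$ together with the symbols $\gamma_1(m)\gamma_1(m')=\ga{m+m'}-\ga{m}-\ga{m'}$ generate $\GA{M}$, all of which lie in the image of $g_2$. For exactness in the middle, the inclusion $\IM{\chi}\subseteq\Ker g_2$ is a one-line check: $g_2(\chi((r^2-r)\tp m))=g_2(p(rm)-r^2p(m))=\ga{rm}-r^2\ga{m}=0$ by the defining relation $\ga{rx}=r^2\ga{x}$ of $\GA{M}$. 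The substantive point is the reverse inclusion $\Ker g_2\subseteq\IM{\chi}$.

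For that I would use the explicit description of $\PR{M}$ furnished by theorem \ref{th:main}, which identifies $\PR{M}$ with the set $K(M)\times M$ equipped with the twisted operations there, under $p(m)\leftrightarrow(0,m)$. Under this identification $\varepsilon$ is the second projection, so $K(M)=\Ker\varepsilon$ sits inside $\PR{M}$ as $K(M)\times\{0\}$. Since $g_2$ kills the image of $\chi$, which together with $\varepsilon$ already accounts for a lot of $\PR{M}$, the cleanest route is: first show the composite $g_2\varphi$ makes $g_2$ factor appropriately, then identify $g_2$ restricted to $K(M)$ with a known map and compute its kernel. Concretely, recall $\varphi=(\theta_1$ on $\SY{M}$, $\theta_2$ on $I_2\tp\GA{M})$-amalgam, and that $g_2\varphi_1(xy)=g_2(p(x+y)-p(x)-p(y))=w(xy)$, while $g_2\varphi_2((r^2-r)\tp\ga{x})=g_2(p(rx)-rp(x))=\ga{rx}-r\ga{x}=(r^2-r)\ga{x}=j_{12}$-type $\dots$ more precisely $g_2\varphi_2$ is the multiplication map $I_2\tp\GA{M}\to\GA{M}$. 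Hence on $K(M)$ the map induced by $g_2$ is the map out of the pushout \eqref{po:pusho2} determined by $w:\SY{M}\to\GA{M}$ and by $\mathrm{mult}:I_2\tp\GA{M}\to\GA{M}$; one checks these agree on $(I_2\tp\SY{M})\oplus\GA{M}$ via lemma \ref{lm:lemA}-style compatibilities, so they glue to a well-defined map $\bar g_2:K(M)\to\GA{M}$, and $g_2=\bar g_2\circ(\text{the }K(M)\text{-component})$ after subtracting the $\chi$-contribution.

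The main obstacle, then, is the bookkeeping needed to show $\Ker(\bar g_2:K(M)\to\GA{M})$ is exactly the image of $\chi$ inside $K(M)$. I would dispatch this by a diagram chase: given $z\in\PR{M}$ with $g_2(z)=0$, write $z$ as a combination of elements $p(rm)$ and $d_p(m,m')$ using the generators of $\PR{M}$; modulo $\IM{\chi}$ we may replace each $p(rm)$ by $r^2p(m)$, reducing $z$ to an element of the submodule generated by the $d_p(m,m')=\varphi_1(mm')$ and by pure "$p(m)$" terms — i.e. $z\equiv \varepsilon$-lift $+\,\varphi_1(\xi)$ for some $\xi\in\SY{M}$; applying $g_2$ and using $g_2\varphi_1=w$ together with injectivity/kernel information about $w$ (from theorem \ref{th:GMsequ}, $\Ker w\subseteq\IM d$, and $d(r\tp x)=rx^2$ with $2r=0$) forces $\xi\in\IM d$, and $\varphi_1(\IM d)=0$ by the lemma preceding theorem of section \ref{sec:mapphi1} (indeed $\varphi_1(d(r\tp x))=rp(2x)=-4p(rx)=0$). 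This collapses $z$ into $\IM\chi$, completing exactness. I expect the only delicate book-keeping to be tracking which terms land in $\IM\chi$ versus $\IM\varphi_1$, since the module operations on $\PR{M}$ are twisted; using the additive structure of $\PR{M}\cong K(M)\oplus$-extension from theorem \ref{th:main} keeps this manageable.
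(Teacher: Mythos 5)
Your verification at the right and the inclusion $\IM\chi\subseteq\Ker g_2$ are fine, but the decisive step $\Ker g_2\subseteq\IM\chi$ is not actually established: the reduction on which your chase rests is unjustified and, as stated, false. A general element of $\PR{M}$ is an $R$-linear combination $\sum_i\lambda_i p(m_i)$ with arbitrary scalars, and rewriting $p(rm)\equiv r^2p(m)$ modulo $\IM\chi$ cannot absorb the coefficients $\lambda_i$, so it does not produce the normal form $z\equiv p(\varepsilon(z))+\varphi_1(\xi)\pmod{\IM\chi}$. Concretely, take $R=\F_2[t]$, $M=R$, so that $\GA{R}$ is free on $\ga{1}$, $\ga{x}=x^2\ga{1}$ and $w=0$ (since $w(1\cdot 1)=2\ga{1}=0$). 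For $z=\chi((t^2-t)\tp 1)=p(t)-t^2p(1)\in\Ker g_2$ one has $\varepsilon(z)=t-t^2$, and applying $g_2$ to the claimed congruence would give $0=g_2(z)=\ga{t-t^2}+w(\xi)=(t-t^2)^2\ga{1}\neq 0$; so $z\not\equiv p(\varepsilon(z))+\varphi_1(\xi)$ for any $\xi$. (In the weaker reading ``$z\equiv p(x)+\varphi_1(\xi)$ for some $x$'', the statement is only true for $z\in\Ker g_2$ because $\bar z=0$ in $\coker\chi$, i.e.\ it presupposes exactly what is to be proved.) The second half of the chase has further gaps: from $\ga{x}+w(\xi)=0$ you cannot conclude $w(\xi)=0$; and even when you can, $\Ker w\subseteq\IM{d}$ is only proved for free $M$ (lemma \ref{lm:GAsequ}) — by theorem \ref{th:GMsequ} and the example following it ($R=\Z[\sqrt2]$, $M=R/\sqrt2R$) the kernel of $\bar w$ is the image of $\tau$ and can be nonzero. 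Your identification of $g_2$ on $K(M)$ through the pushout is correct but is never used to compute any kernel, so it does not close the gap.

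The argument behind the paper's ``clearly'' avoids elements entirely and is the natural fix. Since $I_2\tp M$ is generated by the elements $(r^2-r)\tp m$, the image of $\chi$ is the submodule generated by the $p(rm)-r^2p(m)$; hence $R$-linear maps out of $\coker\chi$ correspond exactly to $R$-quadratic maps $f:M\to N$ with $f(rm)=r^2f(m)$, i.e.\ to homogeneous $R$-quadratic maps. On the other hand, the presentation of $\GA{M}$ recalled in section \ref{sec:Gamma2} (generators $\ga{x}$ and bilinear symbols $\gamma_1(x)\gamma_1(y)$, relations $\ga{x+y}=\ga{x}+\ga{y}+\gamma_1(x)\gamma_1(y)$ and $\ga{rx}=r^2\ga{x}$) says precisely that $\gamma_2$ is universal among such maps. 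Therefore $\ga{x}\mapsto\overline{p(x)}$ is a well-defined map $\GA{M}\to\coker\chi$ inverse to the map induced by $g_2$, and $\coker\chi\cong\GA{M}$ via $g_2$, which is exactness at $\PR{M}$ and at $\GA{M}$. This comparison of universal properties is the one-paragraph replacement for the chase you sketched.
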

We are not able to compute the kernel of $\chi$ so far; this would be an easy consequence of a computation of the first derived functor of $\Gamma^2_R$ which doesn't seem to be known. So we content ourselves of two easy remarks: if $m\in{}_2M$ then $2\tp m\in \Ker \chi$, and if $A$ is the image of ${\rm Tor}\/^R_1(R/I_2,M)$ in $I_2\tp M$ by the connecting homomorphism of the exact sequence $I_2\rightarrowtail R\twoheadrightarrow R/I_2$, then $\Ker \chi\subset A$ (use the map $\epsilon$). In particular, if $I_2=2R$, we get $\Ker \chi = A = {\rm Im}(2R\tp {}_2M\to 2R\tp M) \simeq {}_2M/{}_2RM$.

\section{Generators and relations for $I_2$}
\label{sec:presentI2}
As the ideal $I_2$ plays a key role in all our results, in particular as a factor in torsion products, it is convenient to dispose of a more economic presentation of $I_2$ than the functorial one in corollary \ref{cor:presI2}. This is provided here in case  $R$ itself is given by a presentation as a quotient of a polynomial ring. We start by the following immediate calculation where we write $\varpi(x)=\varpi_x$.


\begin{lem}
 For any monomial $M=\prod_{k=1\dots n}x_k^{m_k}$, $x_k\in R$ we get
\begin{equation*}
 D(M)=M^2-M=\sum_{k=1}^n\left(x_1^{2m_1}\dots x_{k-1}^{2m_{k-1}} \left(\sum_{j=m_k-1}^{2m_k-2}x_k^j\right)
 x_{k+1}^{m_{k+1}}\dots x_n^{m_n}\right)\varpi(x_k),
\end{equation*} 
and for $P=\sum_{k=1}^n \bar a_k M_k$, with $a_k\in \Z$ and the $M_k$'s unitary monomials in the elements $x_i$, we get
\begin{equation*}
 D(P)=P^2-P=\sum_{k=1}^n \bar a_k D(M_k) + \sum_{k=1}^n\overline{\binom{a_k}{2}}M_k^2\varpi(2) + 
\sum_{1\leq k'<k''\leq n} \bar a_{k'}\bar a_{k''}M_{k'}M_{k''}\varpi(2).
\end{equation*} 
\end{lem}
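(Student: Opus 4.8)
The plan is to verify both formulas by straightforward algebra, using only that $D$ is a quadratic derivation, i.e.\ that it satisfies \eqref{eq:defaddqd} and \eqref{eq:defscalqd}, together with the observation $\varpi(x)=D(x)$. For the monomial formula, I would argue by induction on the number $n$ of distinct variables appearing, and within that, induct on the exponents $m_k$. The base case is a single power $x^m$: applying \eqref{eq:defscalqd} repeatedly to $x^m = x\cdot x^{m-1}$ gives $D(x^m) = x\,D(x^{m-1}) + x^{2(m-1)}D(x)$, and unwinding this recursion yields $D(x^m) = \bigl(\sum_{j=m-1}^{2m-2} x^j\bigr)\varpi(x)$ — indeed one checks $\sum_{j=m-1}^{2m-2}x^j = x\sum_{j=m-2}^{2m-4}x^j + x^{2m-2}$, which matches the recursion. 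For the inductive step, write $M = x_n^{m_n}\cdot M'$ with $M' = \prod_{k<n} x_k^{m_k}$, apply \eqref{eq:defscalqd} to get $D(M) = x_n^{m_n} D(M') + (M')^2 D(x_n^{m_n})$, plug in the single-power formula for $D(x_n^{m_n})$ and the inductive hypothesis for $D(M')$, and observe that the prefactor $x_n^{m_n}$ applied to the $k$-th term ($k<n$) of $D(M')$ together with the $(M')^2$-weighted term for $k=n$ reassemble exactly into the claimed sum; here one uses that $M' = x_1^{m_1}\cdots x_{n-1}^{m_{n-1}}$ so that $x_n^{m_n}\cdot x_1^{2m_1}\cdots x_{k-1}^{2m_{k-1}} x_{k+1}^{m_{k+1}}\cdots$ has the right shape after noting the $x_n$-block sits entirely to the right of each $x_k$.

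For the polynomial formula, I would induct on the number $n$ of monomial summands, using the additive law \eqref{eq:defaddqd} in the form $D(a+b) = D(a)+D(b)+ab\,D(2)$, i.e.\ $D(a+b)=D(a)+D(b)+ab\,\varpi(2)$. Writing $P = P' + \bar a_n M_n$ with $P' = \sum_{k<n}\bar a_k M_k$, one gets $D(P) = D(P') + D(\bar a_n M_n) + P'\cdot \bar a_n M_n\,\varpi(2)$. The cross term $P'\bar a_n M_n\,\varpi(2) = \sum_{k<n}\bar a_k \bar a_n M_k M_n\,\varpi(2)$ supplies precisely the missing $k''=n$ contributions to the double sum $\sum_{k'<k''}$. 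It remains to handle $D(\bar a_n M_n)$ for an integer coefficient $a_n$: iterating \eqref{eq:defaddqd} on $\bar a_n M_n = M_n + \dots + M_n$ ($a_n$ copies, with the obvious sign handling for negative $a_n$) gives $D(\bar a_n M_n) = \bar a_n D(M_n) + \overline{\binom{a_n}{2}} M_n^2\,\varpi(2)$, the binomial coefficient counting the number of cross-pairs among the $a_n$ copies. Substituting this and matching term by term against the inductive hypothesis for $D(P')$ closes the induction.

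The only genuinely delicate point is the bookkeeping for integer coefficients, in particular getting the $\binom{a_k}{2}$ factors and the sign conventions right when some $a_k$ are negative; one should note $D(-x) = D(x) + x(-x)\varpi(2) = x^2-x - x^2\varpi(2)$ is not simply $-D(x)$, so the reduction of $\bar a_n M_n$ to a sum of copies of $M_n$ must be done carefully, using that $\overline{\binom{a}{2}} + \overline{\binom{b}{2}} + \overline{ab} = \overline{\binom{a+b}{2}}$ in $R$ (via the reduction map $\Z\to R$). Everything else is routine iterated application of the two defining relations of a quadratic derivation, so I expect no conceptual obstacle beyond this combinatorial care; the statement is explicitly labelled an ``immediate calculation'' in the text, which matches this assessment.
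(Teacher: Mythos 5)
Your argument is correct: both identities follow from the two defining relations of a quadratic derivation applied to $D(r)=r^2-r$, and your inductions (on the variables of the monomial, on the number of summands, and on the integer coefficients via $\binom{a+b}{2}=\binom a2+\binom b2+ab$) all close as you describe; the sign issue you flag for negative $a_k$ is real but is handled exactly by that binomial identity. The paper itself offers no argument — it labels the lemma an ``immediate calculation'' — and the intended verification is even more direct than yours: since $(x_k^2-x_k)\sum_{j=m_k-1}^{2m_k-2}x_k^j=x_k^{2m_k}-x_k^{m_k}$, the $k$-th term of the first formula equals $x_1^{2m_1}\cdots x_{k-1}^{2m_{k-1}}(x_k^{2m_k}-x_k^{m_k})x_{k+1}^{m_{k+1}}\cdots x_n^{m_n}$ and the sum over $k$ telescopes to $M^2-M$; likewise, using $\varpi(2)=2$ and $2\binom{a}{2}=a^2-a$, the right-hand side of the second formula expands directly to $\sum_k\bar a_k^2M_k^2+2\sum_{k'<k''}\bar a_{k'}\bar a_{k''}M_{k'}M_{k''}-\sum_k\bar a_kM_k=P^2-P$, which in particular disposes of the negative-coefficient bookkeeping in one line (check $\bar aD(M)+\overline{\binom a2}M^2\varpi(2)=\bar a^2M^2-\bar aM$ for any $a\in\Z$). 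What your route buys instead is that it exhibits the formulas as consequences of the relations \eqref{eq:defaddqd}--\eqref{eq:defscalqd} alone, i.e.\ as identities valid for any quadratic derivation, which is conceptually closer to how the lemma is used in the presentation of $I_2$ in proposition \ref{prop:presred}; the direct expansion is shorter but uses the specific form $D(r)=r^2-r$.
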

In particular for a polynomial ring $\Z[X_i]_{i\in I}$, $D(P)$ is a $\Z$-linear combination of the elements $\varpi(X_i)$ and $\varpi(2)$.

Now let
$S=\Z[X_i]_{i\in I}$, 
$\mathfrak a=<P_{\alpha}(\underline{X})>_{\alpha\in A}$ and 
$R=S/\mathfrak a$.
 Let $I_*=I\uplus \{*\}$ and $X_*:=2$, and for $i\in I_*$, denote by $x_i$ the class of $X_i$ in $R$, and $\pi_i=\varpi(x_i)=x_i^2-x_i$. Then the desired presentation of $I_2$ is given by the following

\begin{prop}\label{prop:presred}
 The ideal $I_2$ of the ring $R=\Z[x_i]$, is generated by elements $\pi_i$, $i\in I_*$, subject to the relations
\begin{align}
 (x_i^2-x_i)\pi_j&=(x_j^2-x_j)\pi_i & (i,j)&\in I_*^2,& i<j \text{\quad for some total ordering.}\\
\overline{D_S(Q_\alpha)}&=0 & \alpha&\in A
\end{align} 
where $\overline{D_S(Q_\alpha)}$ is the image of $D_S(Q_\alpha)$ by the canonical map $I_2(S)\to I_2(R)$ sending $\varpi_S(X_i)$ (resp. $\varpi_S(2)$) to $\varpi_R(x_i)=\pi_i$ (resp. 
$\varpi_R(\bar 2)=\pi_*=\bar 2^2-\bar 2=\bar 2$) .
\end{prop}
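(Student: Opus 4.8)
The plan is to combine the functorial presentation of $I_2$ from corollary \ref{cor:presI2} with the presentation $R=S/\mathfrak a$ of the ring itself, performing a standard ``presentation-of-a-quotient'' reduction. First I would recall that by corollary \ref{cor:presI2} the $R$-module $I_2 = I_2(R)$ is generated by symbols $\varpi_R(r)$, $r\in R$, subject to the additivity relation $\varpi_{r+s} = \varpi_r + \varpi_s + rs\,\varpi_2$ and the multiplicativity relation $\varpi_{rs} = r\,\varpi_s + s^2\varpi_r$. Using these two relations repeatedly, exactly as in the Lemma just preceding the statement, one shows that every $\varpi_R(r)$ is an $R$-linear combination of the finitely many ``basic'' generators $\pi_i = \varpi_R(x_i)$, $i\in I_*$ (the extra index $*$ with $x_* = \bar 2$ is needed because the additivity relation produces the term $rs\,\varpi_2$, so $\varpi_2$ must be available as a generator). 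Hence the $\pi_i$, $i\in I_*$, already generate $I_2(R)$; this establishes the generation part.

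For the relations, I would argue that the $R$-module $I_2(R)$ is the quotient of the free-ish presentation of $I_2(S)$ — where $S = \Z[X_i]_{i\in I}$ — by the image of $\mathfrak a$. Concretely: the universal quadratic derivation $D_S: S\to I_2(S)$ and the base-change along $S\twoheadrightarrow R$ give a surjection $R\tp_S I_2(S) \twoheadrightarrow I_2(R)$; one checks its kernel is generated by the elements $\overline{D_S(Q_\alpha)}$ (this is where the hypothesis that the $Q_\alpha$ generate $\mathfrak a$ is used: a quadratic derivation on $R$ is the same as a quadratic derivation on $S$ killing $\mathfrak a$, and by the derivation identities it suffices to kill a generating set of $\mathfrak a$, since $D_S(Pf) = P\,D_S(f) + f^2 D_S(P) + (\text{cross term in }\varpi_2)$ shows the submodule generated by the $D_S(Q_\alpha)$ is already an ``$I_2$-ideal''). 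Over $S$ itself, the presentation of corollary \ref{cor:presI2} simplifies: because $S$ is a polynomial ring, the only relations among the $\varpi_S(X_i)$ and $\varpi_S(2)$ that survive are the symmetrized multiplicativity relations $(x_i^2-x_i)\varpi_S(x_j) = (x_j^2-x_j)\varpi_S(x_i)$ of remark \eqref{eq:defcsalsym} — one must verify that every instance of the additivity and multiplicativity relations for arbitrary elements of $S$ is a consequence of these together with the $R$-module structure, again by the reduction-to-monomials computation in the preceding Lemma. Pushing this presentation of $I_2(S)$ down to $R$ and adjoining the relations $\overline{D_S(Q_\alpha)} = 0$ then yields exactly the asserted presentation, with the stated identification $\pi_* = \bar 2^2 - \bar 2 = \bar 2$ in $R$.

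The main obstacle I anticipate is the bookkeeping in the second step: proving rigorously that \emph{no relations beyond} the symmetrized multiplicativity relations $(x_i^2-x_i)\pi_j = (x_j^2-x_j)\pi_i$ are needed for $I_2(S)$, i.e.\ that $I_2(S)$ has \emph{exactly} this presentation and not merely a quotient of it. This amounts to showing that the abstract $R$-module defined by generators $\pi_i$ ($i\in I_*$) and these relations maps isomorphically (not just surjectively) onto $I_2(S) \subset S$; one direction is the universal property, the other requires either an explicit normal form for elements of $I_2(S)$ as a submodule of the polynomial ring $S$, or a dimension/flatness count, or a direct verification that the quadratic-derivation relations of corollary \ref{cor:presI2} are all derivable. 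I would handle this by showing the candidate presentation module admits a quadratic derivation $S\to (\text{candidate})$ sending $X_i\mapsto \pi_i$ (using the Lemma's monomial formula to define it on monomials and the $P$-formula to check additivity and multiplicativity), which by universality of $D_S$ gives a section-like map inverse to the surjection, forcing an isomorphism. Then the passage to $R$ is the routine right-exactness argument.

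\medskip

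\emph{Remark on scope.} Throughout, ``relations'' are understood as relations of $R$-modules, the $R$-action being part of the data; the role of the index $*$ and the convention $X_* = 2$ is precisely to make the term $rs\,\varpi_2$ in \eqref{eq:defaddqd} expressible within the generating set, and in $R$ this produces the collapse $\pi_* = \bar 2$ recorded in the statement.
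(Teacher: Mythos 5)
Your overall architecture is sound and genuinely different from the paper's. You factor the problem as (i) a presentation of $I_2(S)$ for the polynomial ring $S=\Z[X_i]$ by the generators $\varpi_S(X_i)$, $\varpi_S(2)$ and the symmetrized relations alone, followed by (ii) a base-change/representability argument identifying $I_2(R)$ with $R\tp_S I_2(S)$ modulo the classes $\overline{D_S(Q_\alpha)}$. Step (ii) is correct as you sketch it: a quadratic derivation on $R$ restricts to one on $S$ vanishing on $\mathfrak a$, and conversely such a derivation factors through $R$ because for $a\in\mathfrak a$ the scalar $sa$ acts trivially on any $R$-module, so the additivity relation \eqref{eq:defaddqd} gives $d(s+a)=d(s)$; the reduction from all of $\mathfrak a$ to its generators uses $D_S(P\,Q_\alpha)=P\,D_S(Q_\alpha)+Q_\alpha^2\,D_S(P)$ together with the fact that $Q_\alpha^2$ and the products $ab$, $a,b\in\mathfrak a$, map to $0$ in $R$. (Note in passing that the multiplicativity relation \eqref{eq:defscalqd} has no $\varpi_2$ cross term, contrary to what you wrote; the cross term occurs only in \eqref{eq:defaddqd}, and its coefficient again lies in $\mathfrak a$, so nothing is harmed.) The paper proceeds differently: it works with the single functorial presentation of corollary \ref{cor:presI2} for $R$ itself, lifts the generators to $S^{**}$, adjoins the relations coming from $\mathfrak a$, and then performs an explicit chain of Tietze-type reductions, where Lemma \ref{relrho} supplies identities among the defining relations $\rho_1,\rho_2$ allowing one to keep only those indexed by monomials and variables, and the last step cancels the superfluous generators against the remaining relations via the uniqueness of the leading monomial of a polynomial and of the leading variable of a monomial. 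Your factorization is conceptually cleaner (the $\mathfrak a$-part becomes pure right-exactness plus Yoneda), and it isolates the polynomial-ring case as a statement of independent interest.

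The caveat is that your step (i) --- that over $S$ no relations beyond $(X_i^2-X_i)\pi_j=(X_j^2-X_j)\pi_i$ are needed --- is exactly where the content of the proposition sits, and your proposal only names the strategy: define $d:S\to C$ on monomials by the formula of the Lemma preceding the statement, extend by the $P$-formula, and verify \eqref{eq:defaddqd} and \eqref{eq:defscalqd} inside the candidate module $C$. That strategy does close, but be aware of what the verification involves: the monomial formula depends on an ordering of the variables, and already its order-independence in $C$ requires the symmetrized relations (for $x^my^n$ the two orderings differ by $\bigl(\sum_j x^j\bigr)\bigl(\sum_j y^j\bigr)\bigl((x^2-x)\pi_y-(y^2-y)\pi_x\bigr)$); likewise multiplicativity for products of monomials, additivity across coefficients (via $\binom{a+b}{2}=\binom{a}{2}+\binom{b}{2}+ab$), and the monomial-level symmetrizations $(M^2-M)\,d(N)=(N^2-N)\,d(M)$ must all be checked before universality of $D_S$ hands you the inverse to the evident surjection $C\twoheadrightarrow I_2(S)$. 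So the route is viable, but the computations you would have to write out are essentially the content of the paper's Lemma \ref{relrho} in different packaging; as submitted, the proof of this key step is a plan rather than an argument.
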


 The proof requires some more notation. Let $R^*:=R-\{0\}$, $R^{**}:=R-\{0,1\}$, $J'(R):=(R^*)^2$, $J''(R):=(R^{**})^2$ and $J(R):=J'(R)\amalg J''(R)$. 
We denote by $\{[x]\}$ the canonical basis of $R^{(R^{**})}$ and by $\{[x,y]_1\}$ and $\{[x,y]_2\}$ the basis of $R^{(J'(R))}$ and $R^{(J''(R))}$, and we consider  the elements
\begin{align*}
\rho_1(x,y)&:=[x+y]-[x]-[y]-xy[2],\\
\rho_2(x,y)&:=  [xy]-x[y]-y^2[x],
\end{align*}
in $R^{(R^{**})}$ with $[0]=[1]:=0$.

\begin{lem}\label{relrho}
 We have the following relations:
\begin{subequations}
\begin{align}
\rho_1(x+y,z)&=\rho_1(x,y+z)-\rho_1(x,y)+ \rho_1(y,z),\label{relrho1}\\
\rho_1(x,y+z)&=\rho_1(y,x+z)+\rho_1(x,z) -\rho_1(y,z),\label{relrho1a}\\
\rho_1(\sum_{i=1}^n  x_i,y)&= \sum_{i=1}^n\rho_1(x_i,y+\sum_{j=1}^{i-1}x_j) -\sum_{i=2}^n\rho_1(x_i,\sum_{j=1}^{i-1}x_j), \label{relrho1s}\\
\rho_2(x+y,z)&=\rho_2(x,z)+\rho_2(y,z)+\rho_1(xz,yz) -z^2\rho_1(x,y),\label{relrho2g}\\
\rho_2(\sum_{i=1}^n  x_i,y)&=\sum_{i=1}^n \rho_2(x_i,y) \label{relrho2gs}
+\sum_{i=1}^{n-1}(\rho_1(\sum_{j=1}^i x_jy,x_{i+1}y) -y^2\rho_1(\sum_{j=1}^i x_j,x_{i+1}) ),
\\
\rho_2(x,y+z)&=\rho_2(x,y)+\rho_2(x,z) +yz(\rho_2(x,2)-\rho_2(2,x))\label{relrho2d}\\
&\hskip4cm +\rho_1(xy,xz) -x\rho_1(y,z),\notag\\
\rho_2(x,\sum_{i=1}^n y_i)&=\sum_{i=1}^n\rho_2(x,y_i)  +(\sum_{1\leq i <j\leq n}y_iy_j) (\rho_2(x,2)-\rho_2(2,x))
\label{relrho2ds}\\
&\hskip2cm +\sum_{i=1}^{n-1}(\rho_1(x\sum_{j=1}^i y_j,xy_{j+1})- x\rho_1(\sum_{j=1}^i y_j,y_{j+1})), \notag\\
\rho_2(xy,z)&=\rho_2(x,yz)+x\rho_2(y,z) -z^2\rho_2(x,y),\label{relrho3}\\
\rho_2(x,yz)&=\rho_2(y,xz) +z^2(\rho_2(x,y)-\rho_2(y,x))\label{relrho3a}
+y\rho_2(x,z)-x\rho_2(y,z),\\
\rho_2(\prod_{i=1}^n x_i,y)&=
\sum_{i=1}^n\Bigr(\prod_{j=1}^{i-1}x_j\Bigr)
\rho_2(x_i,\prod_{j=i+1}^{n}x_jy)\label{relrho3s}
-y^2 \sum_{i=1}^{n-1}\Bigr(\prod_{j=1}^{i-1}x_j\Bigr)
\rho_2(x_i,\prod_{j=i+1}^{n}x_j).
\end{align} 
\end{subequations}
\end{lem}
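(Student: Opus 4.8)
The plan is to treat every identity in the statement as a purely formal equation in the free $R$-module $F := R^{(R^{**})}$, each side being an $R$-linear combination of the basis symbols $[z]$. Since $\rho_1(x,y) = [x+y]-[x]-[y]-xy[2]$ and $\rho_2(x,y) = [xy]-x[y]-y^2[x]$, it suffices to substitute these definitions into both sides of each relation and compare, one basis symbol at a time, the coefficients that appear. The convention $[0]=[1]=0$ causes no trouble: it amounts to collapsing two basis indices to $0$ simultaneously on both sides of an already linear identity. Conceptually these relations are the ``secondary relations'' among the relators $\rho_1,\rho_2$ of the functorial presentation of $I_2$ in corollary \ref{cor:presI2}, which is exactly why they are needed to reduce that presentation in proposition \ref{prop:presred}; note that they are genuine identities in $F$, not merely after passing to the quotient $F/N \cong I_2$, so no shortcut through $I_2$ is available.

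First I would dispatch the six binary relations \eqref{relrho1}, \eqref{relrho1a}, \eqref{relrho2g}, \eqref{relrho2d}, \eqref{relrho3}, \eqref{relrho3a} by direct expansion. For \eqref{relrho1} and \eqref{relrho1a} one only has to track the symbol $[x+y+z]$ and the coefficient of $[2]$; everything else telescopes, leaving precisely $\rho_1(x+y,z)$ resp.\ $\rho_1(x,y+z)$. For \eqref{relrho2g} one writes $\rho_2(x+y,z) = [xz+yz]-(x+y)[z]-z^2[x+y]$ and uses $\rho_1(xz,yz)$ to rewrite $[xz+yz]$ and $\rho_1(x,y)$ to rewrite $z^2[x+y]$; the coefficients of $[xz],[yz],[x],[y],[2]$ then match exactly. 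The relations \eqref{relrho2d}, \eqref{relrho3}, \eqref{relrho3a} are of the same kind: expand the left-hand $\rho_2$, introduce $\rho_1$- or $\rho_2$-terms on the right to absorb the composite symbols $[xy]$, $[xyz]$, $[x(y+z)]$, and verify the remaining handful of coefficients. The correction term $yz(\rho_2(x,2)-\rho_2(2,x))$ in \eqref{relrho2d} is forced precisely by the asymmetry of $\rho_2$ in its two slots, which becomes visible upon comparing $\rho_2(x,y+z)$ with $\rho_2(x,y)+\rho_2(x,z)$.

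Next I would obtain the four iterated identities from the binary ones by induction on $n$: \eqref{relrho1s} from \eqref{relrho1} and \eqref{relrho1a}, \eqref{relrho2gs} from \eqref{relrho2g}, \eqref{relrho2ds} from \eqref{relrho2d}, and \eqref{relrho3s} from \eqref{relrho3}. The inductive step peels off the last summand (or factor), applies the corresponding binary relation, and then recombines the $\rho_1$- or $\rho_2$-correction terms produced at the successive stages into the telescoping sums shown on the right-hand sides; the appearance of the partial sums $\sum_{j=1}^{i-1}$ in those formulas is exactly what dictates peeling from the top index downwards. For \eqref{relrho2ds} one additionally tracks the coefficient $\sum_{1\le i<j\le n}y_iy_j$ of $\rho_2(x,2)-\rho_2(2,x)$, which is the cumulative contribution of the binary correction coefficient over all pairs of indices.

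The main obstacle is not conceptual but bookkeeping, and it is concentrated in the three iterated relations \eqref{relrho2gs}, \eqref{relrho2ds}, \eqref{relrho3s}: one must check that the quadratic pair-sum coefficients and the nested partial-sum arguments generated by repeated application of the binary relations genuinely collapse to the compact expressions stated, with no residual cross terms. I expect to control this by fixing the order of peeling so that each newly created correction term lines up with the next term of the target telescoping sum, so that the verification reduces at every step to one instance of a binary relation already proved.
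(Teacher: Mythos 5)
Your proposal is correct and follows essentially the same route as the paper: direct coefficient-by-coefficient verification of the binary relations in the free module $R^{(R^{**})}$, followed by induction on $n$ for the iterated relations \eqref{relrho1s}, \eqref{relrho2gs}, \eqref{relrho2ds}, \eqref{relrho3s}. The only cosmetic difference is that the paper obtains \eqref{relrho1a} and \eqref{relrho3a} not by fresh expansion but by expanding $\rho_1(x+y,z)$ and $\rho_1(y+x,z)$ (resp.\ $\rho_2(xy,z)$ and $\rho_2(yx,z)$) via \eqref{relrho1} (resp.\ \eqref{relrho3}) and comparing, which saves a little computation but changes nothing substantial.
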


\begin{proof}
 By simple computation for the relations \eqref{relrho1}, \eqref{relrho2g}, \eqref{relrho2d} and \eqref{relrho3}.
Using \eqref{relrho1} to compute $\rho_1(x+y,z)$ and $\rho_1(y+x,z)$ wet get \eqref{relrho1a}. Using \eqref{relrho3} to compute $\rho_2(xy,z)$ and $\rho_2(yx,z)$ wet get \eqref{relrho3a}. Relations \eqref{relrho1s}, \eqref{relrho2gs}, \eqref{relrho2ds} and \eqref{relrho3s} are obtained by induction respectively from the relations \eqref{relrho1}, \eqref{relrho2g}, \eqref{relrho2d} and \eqref{relrho3}.
\end{proof}

\begin{proof}[Proof of proposition \ref{prop:presred}]
 By corollary \ref{cor:presI2} we have the exact sequence:
\begin{equation}\label{presP2RR}
\begin{CD}
 R^{(J(R))}@>t>>R^{(R^{**})}@>\varpi>>I_2@>>>0
\end{CD}
\end{equation} 
where $\varpi([x]):=\varpi_x=x^2-x$ for $x\in R^{**}$, $t([x,y]_1):=\rho_1(x,y)$ for $(x,y)\in J'(R)$ and $t([x,y]_2):=\rho_2(x,y)$ for $(x,y)\in J''(R)$.
Let  $K:=\{(i,i')\mid i<i'\in I^*\}$ and $J_1(R):=J(R)\amalg K$. We can extend the map $t$ to a map 
$t_1:R^{(J_1(R))} \mapsto R^{(R^{**})}$ such that 
$\IM t=\IM t_1$, by putting
\begin{align*}
 t_1((i,i'))&:=\rho_2(x_i,x_{i'})-\rho_2(x_{i'},x_i) &\text{for }(i,i')&\in K.
\end{align*}
Clearly $t_1((i,i'))$ is in $\IM t$.  

Obviously we can now replace the exact sequence \ref{presP2RR} by the following:
\begin{equation*}
 \begin{CD}
 R^{(J_2)}@>t_2>>R^{(S^{**})}@>\varpi>>I_2@>>>0
\end{CD}
\end{equation*}
with $J_2:=J_1(S)\amalg A_1$ where $A_1:=\{(x,y)\in (S^{**})^2\mid x\equiv y \text{ mod } \mathfrak a\}$.
The map $t_2$ is defined by 
\begin{align*}
 t_2((x,y))&=[x]-[y]& \text{ for }(x,y)&\in A_1,
\end{align*}
and by the composition of $t_{1}$ and the canonical map
$S^{(S^{**})}\to R^{(S^{**})}$ on $J_1(S)$.

First we will reduce step by step the set $A_1$.
\begin{MYitemize}
 \item If $x\in S$ and $a\in \mathfrak a$ then $(x+a,x)\in A_1$. We have $t_2((x+a,x))=t_2(x,0)+\rho_1(x,a)$. Without changing the image of $t_2$ we can then replace $A_1$ by $A_2=\mathfrak a\times\{0\}\simeq \mathfrak a$.
 \item By the relations $\rho_1(x,y)$ we can suppose $a$ to be a multiple of one of the polynomials $P_\alpha$.
 \item By the relations $\rho_2(x,y)$ we then can suppose $a$ to be one of the polynomials $P_\alpha$.
\end{MYitemize}

Taking $J_3:=J'(S)\amalg J''(S)\amalg K_S\amalg A$ we obtain the exact sequence
\begin{equation*}
 \begin{CD}
 R^{(J_3)}@>t_3>>R^{(S^{**})}@>\varpi>>I_2@>>>0
\end{CD}
\end{equation*}
with $t_3(\alpha)=[P_\alpha]$.

We will now reduce step by step the sets $J'(S)$ and $J''(S)$.
\begin{MYitemize}
 \item By relation \eqref{relrho2gs} it suffices to take those elements $(x,y)\in J''(S)$ where $x$ is a unitary monomial.
 \item By  relation \eqref{relrho3s} it suffices to take those elements  $(x,y)\in J''(S)$ where $x$ is a generator.
 \item By the relation \eqref{relrho2ds} it suffices to take those elements  $(x,y)\in J''(S)$ where $x$ is a generator and $y$ is a unitary monomial.
\item By relation \eqref{relrho1s} it suffices to take those elements  $(x,y)\in J'(S)$ where $x$ is a unitary monomial.
\end{MYitemize}
We can order all the unitary monomials by total degree and by lexicographic order.
\begin{MYitemize}
 \item By relation \eqref{relrho1a} it suffices to take those elements  $(x,y)\in J'(S)$ where $x$ is a unitary monomial greater or equal to any monomial in $y$.
 \item By  relation \eqref{relrho3a} it suffices to take those elements  $(x,y)\in J''(S)$ where $x$ 
  is a variable greater or equal to any variable in the unitary monomial $y$.
\end{MYitemize}

Denote by $J'_4$ the  set of elements $(x,y)$ of $J'(S)$ where $x$ is a unitary monomial greater or equal to any monomial in $y$, and by $J''_4$ the set of elements $(x,y)$ of $J''(S)$ where $y$ is a unitary monomial and $x$ is a  variable greater or equal to any variable in $y$. Let $J_4:=J'_4\amalg J''_4\amalg K\amalg A$  and let $t_4$ be the restriction of $t_3$. We then get the exact sequence
\begin{equation*}
\begin{CD}
 R^{(J_4)}@>t_4>>R^{(S^{**})}@>\varpi>>I_2@>>>0
\end{CD}
\end{equation*}
Now because each polynomial has a unique biggest monomial and each monomial has a unique biggest variable, we can cancel $J'_4$ and $J''_4$ in $J_4$ and replace the central term $R^{(S^{**})}$ by $R^{(I_*)}$. We then obtain the exact sequence
\begin{equation*}
\begin{CD}
 R^{(K\amalg A)}@>t_4>>R^{(I_*)}@>\varpi>>I_2@>>>0
\end{CD}
\end{equation*}
and the proposition is proved.
\end{proof}

\section*{Perspectives}

Beyond this paper, we will use quadratic algebra to show that any quadratic map between modules can be identified with a morphism in a certain monoidal, complete and cocomplete homological category ${\bf M}_R$, whose objects are  of explicit algebraic nature and whose morphisms are families of $R$-linear maps. This allows to carry out constructions with quadratic maps which do not make sense in classical algebra: in particular, they admit kernels,  cokernels, tensor powers etc. Moreover, quadratic algebraic K-theory $K_0^{quad}(R)$ of $R$ can be defined from ${\bf M}_R$. All of this is work in progress and will be presented elsewhere.

\end{document}